\documentclass[12pt,twoside]{amsart}
\usepackage{a4wide, amsmath,amsbsy,amsfonts,amssymb,
stmaryrd,amsthm,mathrsfs,graphicx, amscd, tikz-cd, cancel}
\usepackage[normalem]{ulem}
\usepackage{soul}
\usetikzlibrary{matrix,arrows,decorations.pathmorphing}
\pagestyle{headings}
\markboth{Chapter}{Section}
\usepackage[active]{srcltx}
\usepackage[
	hypertexnames=false,
	hyperindex,
	pagebackref,  
	pdftex,
	breaklinks=true,
	bookmarks=false,
	colorlinks,
	linkcolor=blue,
	citecolor=red,
	urlcolor=red,
]{hyperref}
\usepackage{glossaries}
\makeindex

\usepackage[all]{xy}
\usepackage{subfig}
\usepackage{tikz}
\usetikzlibrary{shapes,arrows,shadows}
\usetikzlibrary{decorations.markings}
\usepackage{color}
\usepackage[normalem]{ulem}
\usepackage{hyperref}
\usepackage{wrapfig}
\usetikzlibrary{arrows}
\usepackage{pinlabel}
\long\def\symbolfootnote[#1]#2{\begingroup%
\def\thefootnote{\fnsymbol{footnote}}\footnote[#1]{#2}\endgroup}

\newtheorem{theorem}{Theorem}[section]

\newtheorem{proposition}[theorem]{Proposition}
\newtheorem{corollary}[theorem]{Corollary}
\newtheorem{lemma}[theorem]{Lemma}

\theoremstyle{definition}

\newtheorem{definition}[theorem]{Definition}
\newtheorem{remark}[theorem]{Remark}

\newtheorem*{namedtheorem}{\theoremname}
\newcommand{\theoremname}{testing}

\setcounter{tocdepth}{1}

\DeclareMathSymbol{\Alpha}{\mathalpha}{operators}{"41}
\DeclareMathSymbol{\Beta}{\mathalpha}{operators}{"42}
\DeclareMathSymbol{\Epsilon}{\mathalpha}{operators}{"45}
\DeclareMathSymbol{\Zeta}{\mathalpha}{operators}{"5A}
\DeclareMathSymbol{\Eta}{\mathalpha}{operators}{"48}
\DeclareMathSymbol{\Iota}{\mathalpha}{operators}{"49}
\DeclareMathSymbol{\Kappa}{\mathalpha}{operators}{"4B}
\DeclareMathSymbol{\Mu}{\mathalpha}{operators}{"4D}
\DeclareMathSymbol{\Nu}{\mathalpha}{operators}{"4E}
\DeclareMathSymbol{\Omicron}{\mathalpha}{operators}{"4F}
\DeclareMathSymbol{\Rho}{\mathalpha}{operators}{"50}
\DeclareMathSymbol{\Tau}{\mathalpha}{operators}{"54}
\DeclareMathSymbol{\Chi}{\mathalpha}{operators}{"58}
\DeclareMathSymbol{\omicron}{\mathord}{letters}{"6F}
\newcommand{\R}{\mathbb{R}}
\newcommand{\C}{\mathbb{C}}
\newcommand{\N}{\mathbb{N}}
\newcommand{\Z}{\mathbb{Z}}

\newcommand{\Q}{\mathbb{Q}}
\newcommand{\I}{\mathbb{I}}

\newcommand{\td}{\tilde}


\def\Aut{\operatorname{Aut}}

\def\SL{\operatorname{SL}}
\def\PSL{\operatorname{PSL}}

\def\Inn{\operatorname{Inn}}
\def\inn{\operatorname{inn}}

\def\Out{\operatorname{Out}}
\def\Map{\operatorname{Map}}

\def\Hom{\operatorname{Hom}}

\def\GT{\widehat{\operatorname{GT}}}
\def\PG{\mathrm{P}\Gamma}

\def\Star{\operatorname{Star}}
\def\Link{\operatorname{Lk}}

\def\Spec{\operatorname{Spec}}
\def\Fix{\operatorname{Fix}}
\def\P{{\mathbb P}}
\def\H{{\mathbb H}}
\def\M{{\mathbb M}}
\def\F{{\mathbb F}}

\def\cI{{\mathcal I}}


\def\cT{{\mathcal T}}

\def\cF{{\mathcal F}}
\def\hF{{\widehat{\operatorname{F}}}}

\def\cG{{\mathcal G}}

\def\cM{{\mathcal M}}

\def\bM{{\overline{\mathcal M}}}

\def\dd{\partial}
\def\a{{\alpha}}

\def\l{{\lambda}}
\def\L{{\Lambda}}

\def\Sg{{\Sigma}}
\def\g{{\gamma}}
\def\G{{\Gamma}}
\def\kG{{\check\Gamma}}
\def\kPG{\mathrm{P}{\check\Gamma}}
\def\hG{{\widehat\Gamma}}
\def\hPG{\mathrm{P}{\widehat\Gamma}}

\def\s{{\sigma}}
\def\u{{\upsilon}}

\def\kC{{\check  C}}
\def\hC{{\widehat{C}}}
\def\hI{{\hat{\mathrm{I}}}}

\def\hpi{{\widehat{\pi}}}

\def\ssm{\smallsetminus}
\def\ol{\overline}
\def\wh{\widehat}
\def\wt{\widetilde}
\def\ul{\underline}
\def\ra{\rightarrow}
\def\hookra{\hookrightarrow}
\def\co{\colon\thinspace}

 \begin{document}

\title{Automorphisms of the procongruence pants complex}
 
 \author[M. Boggi]{Marco Boggi}
\address{UFF - Instituto de Matem\'atica e Estat\'{\i}stica - Niter\'oi - RJ 24210-200, Brazil.}
\email{marco.boggi@gmail.com}

\author[L. Funar]{Louis Funar}
\address{Institut Fourier,
Laboratoire de Mathematiques UMR 5582, Universit\'e Grenoble Alpes, CS 40700, 38058 Grenoble, France}
\email{louis.funar@univ-grenoble-alpes.fr} 

\begin{abstract}    
We show that every automorphism of the congruence completion of the extended mapping class group that preserves the set of conjugacy 
classes of procyclic groups generated by Dehn twists is inner, and that its automorphism group is naturally isomorphic to  the 
automorphism group of the procongruence pants complex. In the genus-zero case, we prove the stronger result that all automorphisms 
of the profinite completion of the extended mapping class group are inner.

\bigskip

\noindent {\bf AMS Math Classification:} 14G32, 11R32, 20F34, 14D23, 57M10.

\end{abstract}

\maketitle

\section{Introduction}
Let $S=S_{g,n}$\index{$S_{g,n}$, a surface of genus $g$ with $n$ punctures} be a closed orientable surface of genus $g(S)=g$ with $n(S)=n$ punctures. 
We will assume that $S$ has negative Euler characteristic:
$\chi(S)=2-2g-n<0$. Let  $\G^{\pm}(S)$\index{$\G^{\pm}(S)$, the extended mapping class group of $S$} be the {\em extended mapping class group} of $S$, namely the group of isotopy classes of
diffeomorphisms of $S$, and $\G(S)$\index{$\G(S)$ the mapping class group of $S$} be the {\em mapping class group} of $S$, i.e.\ the subgroup of $\G^{\pm}(S)$ consisting of 
the mapping classes which preserve the orientation of $S$. We denote respectively by $\PG(S)$\index{$\PG(S)$, the pure mapping class group of $S$} 
and $\PG^{\pm}(S)$\index{$\PG^{\pm}(S)$, the pure extended mapping class group of $S$} 
the \emph{pure} mapping class group and the \emph{pure} extended mapping class group, namely the subgroups of $\G(S)$ and $\G^{\pm}(S)$ 
consisting of those mapping classes which fix pointwise the punctures.

Let $\cM(S)$\index{$\cM(S)$, the moduli stack of smooth curves whose complex models are diffeomorphic to $S$} 
be the moduli stack of smooth curves whose complex models are diffeomorphic to $S$. This is a smooth Deligne--Mumford 
(briefly DM) stack over $\Spec\Z$ with the property that the topological fundamental group of $\cM(S)_\C:=\cM(S)\times\Spec\C$ identifies 
with $\G(S)$. 

For $S=S_{g,n}$, we will sometimes also denote $\cM(S)$ by $\cM_{g,[n]}$.
\index{$\cM_{g,[n]}$, the moduli stack of smooth curves of genus $g$ with $n$ \emph{unordered} punctures}
We will instead denote by $\cM_{g,n}$
\index{$\cM_{g,n}$, the moduli stack of smooth curves of genus $g$ with $n$ \emph{ordered} punctures} 
the \'etale covering of $\cM_{g,[n]}$ obtained fixing an order on the punctures of $S$. The topological fundamental group of $(\cM_{g,n})_\C$ 
then identifies with $\PG(S_{g,n})$. We let $d(S):=\dim\cM(S)=3g-3+n$\index{$d(S)$, modular dimension of $S$} and call this number the \emph{modular dimension of $S$}.

Ivanov (cf.\ \cite{I0} and \cite{I2}) proved that all automorphisms of the extended mapping class group  $\G^{\pm}(S)$ are inner for $g(S)\geq 3$ 
and for $g(S)\geq 2$, $n(S)\geq 1$. McCarthy (cf.\ \cite{McCarthy}) showed that this is still true, for $g(S)=2$ and $n(S)=0$, if we restrict to those 
automorphisms which preserve the set of conjugacy classes of Dehn twists. Korkmaz (cf.\ \cite{Korkmaz}) extended Ivanov's result to the case $g(S)=1$, 
$n(S)\geq 3$ and $g(S)=0$, $n(S)\geq 5$. The case $g(S)=1$, $n(S)=2$ was settled by Luo in \cite{Luo}, where he also gave a new proof for all genera.

The proof of all the above results is based on the study of the \emph{complex of curves} $C(S)$\index{$C(S)$, the curve complex of $S$}. 
This is the abstract simplicial complex of 
dimension $d(S)-1$ whose simplices are the sets of isotopy classes of essential simple closed curves on $S$ which admit disjoint representatives 
(such sets are called \emph{multicurves}). 
There is a natural action of the extended mapping class group $\G^{\pm}(S)$ on $C(S)$ and the above results are obtained  showing that 
all automorphisms of $C(S)$ are induced by this action. In fact, this action corresponds to the inner action of $\G^{\pm}(S)$ on the
set of Dehn twists, which are parametrized by the vertices of $C(S)$, and, if we denote by $\Aut^\I(\G^{\pm}(S))$ the group of automorphisms of 
$\G^{\pm}(S)$ which preserve the set of conjugacy classes of Dehn twists, there is a series of monomorphisms:
\[\Inn(\G^{\pm}(S))\subseteq\Aut^\I(\G^{\pm}(S))\hookra\Aut(C(S)),\]
which, for $d(S)>1$, with the only exception of the case $g(S)=1$ and $n(S)=2$, are all shown to be isomorphisms.
The identity $\Inn(\G^{\pm}(S))=\Aut(\G^{\pm}(S))$ then follows from the fact that all automorphisms of $\G^{\pm}(S)$, for $d(S)>1$, 
with few low-genus exceptions, preserve the set of conjugacy classes of Dehn twists.

In \cite{Margalit}, Margalit determined the automorphism group of a related complex, the so-called \emph{pants graph} $C_P(S)$.\index{$C_P(S)$, 
the pants complex of $S$}
This is defined as follows: the vertices of $C_P(S)$ are  pants decompositions (i.e.\ maximal multicurves) of $S$ and correspond to facets
(simplices of highest dimension $d(S)-1$) of $C(S)$. Two vertices $\ul \a=(\alpha_0,\alpha_1,\ldots,\alpha_{d(S)-1})$ and 
$\ul \a'=(\alpha'_0,\alpha'_1,\ldots,\alpha'_{d(S)-1})$ are connected by an edge if they differ by an \emph{elementary move}, that is:
the multicurves $\ul \a$ and $\ul \a'$ have $d(S)-1$ elements in common, so that, up to relabeling $\a_i=\a'_i$, for $i=1,\ldots, d(S)-1$,
and the surface $S'$ obtained cutting $S$ along the curves $\a_i$, for $i>0$, is a surface of modular dimension $1$, i.e.\ $S'=S_{1,1}$ or $S'=S_{0,4}$.
Then, $\a_0$ and $\a'_0$, which are supported on $S'$, should intersect in a minimal way, i.e.\ they 
have geometric intersection number $1$, in the first case, and $2$ in the second case. 

Margalit's results (cf.\ \cite[Theorem~1 and 2]{Margalit}) then imply that, for $d(S)>1$, 
the natural action of $\G^{\pm}(S)$ on $C_P(S)$ induces a series of isomorphisms:
\begin{equation}\label{Margalitiso}
\Inn(\G^{\pm}(S))\cong\Aut^\I(\G(S))\cong\Aut(C_P(S)).
\end{equation}

When the center $Z(\G(S))$ is trivial, that is, for $d(S)>1$ and $S\neq S_{1,2}$ and $S_2$, there is also a series of isomorphisms:
\begin{equation}\label{nonoriso1}
\Aut(\G^\pm(S))\cong\Aut^\I(\G^\pm(S))\cong\Aut^\I(\G(S))\cong\Aut(\G(S)).
\end{equation}

For $S=S_{1,1}, S_{1,2}$ or $S_2$, the center $Z(\G(S))=Z(\G^\pm(S))$ is instead generated by the hyperelliptic involution $\u$ and 
so $Z(\G(S))\cong\Z/2$. In this case, there are two epimorphisms $\xi\co\G^\pm(S)/Z(\G(S))\to Z(\G(S))$ and $\zeta\co\G^\pm(S)/Z(\G(S))\to Z(\G(S))$,
where the latter is induced by the orientation character (so that its restriction to $\G(S)/Z(\G(S))$ is trivial), which determine
the automorphisms $\exp\xi$ and $\exp\zeta$ of $\G^\pm(S)$, defined by the assignments $x\mapsto x\cdot\xi(\bar x)$ and $x\mapsto x\cdot\zeta(\bar x)$,
where $\bar x$ denotes the image of $x$ in the quotient $\G^\pm(S)/Z(\G(S))$. In these cases, we have the isomorphisms (cf.\ Section~\ref{exAut}): 
\begin{equation}\label{nonoriso2}
\Aut(\G^\pm(S))\cong\Aut^\I(\G^\pm(S))\times\langle\exp\xi\rangle\hspace{0.5cm}\mbox{and}\hspace{0.5cm}
\Aut^\I(\G^\pm(S))=\Aut^\I(\G(S))\times\langle\exp\zeta\rangle.
\end{equation}

In the paper \cite{BF}, we proved a partial analogue of the series of isomorphisms~\eqref{Margalitiso} in the setting of 
\emph{procongruence mapping class groups} which we now proceed to define.

The \emph{profinite} mapping class groups $\hG^\pm(S)$, $\hG(S)$,  $\hPG^\pm(S)$ and  $\hPG(S)$ are defined to be
the profinite completions of $\G^\pm(S)$, $\G(S)$,  $\PG^\pm(S)$ and  $\PG(S)$, respectively.
\index{$\hG(S)$, profinite completion of $\G(S)$}
\index{$\hPG^\pm(S)$, profinite completion of $\PG^\pm(S)$}
\index{$\hPG(S)$, profinite completion of $\PG(S)$}

The \emph{procongruence} mapping class groups $\kG^\pm(S)$, $\kG(S)$,  $\kPG^\pm(S)$ and  $\kPG(S)$\index{$\kG^\pm(S)$, 
procongruence completion of $\G^\pm(S)$}\index{$\kG(S)$, procongruence completion of $\G(S)$}
\index{$\kPG^\pm(S)$, procongruence completion of $\PG^\pm(S)$}
\index{$\kPG(S)$, procongruence completion of $\PG(S)$}
are the images of $\hG^\pm(S)$, $\hG(S)$,  $\hPG^\pm(S)$ and  $\hPG(S)$, respectively, in the profinite group $\Out(\wh{\pi_1(S)})$, 
where, for an abstract group $G$, we denote by $\wh{G}$\index{$\wh{G}$, the profinite completion of the group $G$} its profinite completion. 
It is well known that the natural homomorphism from each of the above mapping class groups to either its profinite or procongruence completion is injective
(both statements immediately follow from the proof of \cite[Theorem~2 and~3]{Grossman}). We then identify the abstract
groups with their images in the corresponding profinite groups. 

The profinite and the procongruence completions of the mapping class group coincide 
for $g(S)\leq 2$ (cf.\ \cite{Asada}, \cite{McReynolds}, \cite{Hyp} and \cite{CongTop}), while, for $g(S)\geq 3$, this is an open problem. 
For this reason, we will stick with the procongruence completion, since, in contrast with the profinite completion, 
some basic combinatorial properties are known, thanks to the results contained in \cite{Boggi} and \cite{CongTop}. 

In analogy with the topological case, for the study of the automorphism group of procongruence mapping class groups, it is useful to introduce
the {\em procongruence curve complex} $\kC(S)$.\index{$\kC(S)$, the  procongruence curve complex of $S$}
This is a \emph{simplicial profinite complex} (cf.\ \cite[Definition~3.2]{Boggi}) of dimension
$d(S)-1$, naturally associated to the procongruence completion of the  mapping class group and endowed with a natural continuous action of $\kG(S)$. 
Naively, $\kC(S)$ can be described as the inverse limit of the quotients of $C(S)$ by the action of congruence levels of $\G(S)$, 
that is, finite index subgroups of $\G(S)$ which are open for the congruence topology. 

However, \emph{this is not how $\kC(S)$ is actually defined}, and for the precise (rather technical) definition, we urge the reader 
to look at \cite[Section~4]{Boggi} or \cite[Section~4.6]{BF}. In particular, $\kC(S)$ is a genuine abstract simplicial complex so that, for instance, 
all the usual definitions of star and link of a simplex in a simplicial complex make perfect sense for it and do not need an ad hoc definition.

The set of \emph{profinite Dehn twists} of $\kPG(S)$ is the closure, inside this group, of the set of Dehn twists of $\PG(S)\subset\kPG(S)$. 
These elements were first introduced in \cite{B1} for profinite mapping class groups and studied extensively in \cite{Boggi} for procongruence
mapping class groups. A similar and obviously related notion for \'etale fundamental groups of configuration spaces 
of points on algebraic curves has been later introduced in \cite{HM}.

The key property of $\kC(S)$ is then that its set of $k$-simplices parameterizes the \emph{inertia groups} of $\kPG(S)$ (cf.\ \cite[Theorem~6.9]{Boggi}): 

\begin{definition}\label{inertiagroups}For a simplex $\s\in\kC(S)$, we define $\hI_\s$\index{$\hI_\s$, inertia group} to be the closed abelian subgroup 
of $\kPG(S)$ topologically generated by the profinite Dehn twists parameterized by the vertices of $\s$. This is called the \emph{inertia group} associated to $\s$.
\end{definition}

Note that the natural action of $\kG^\pm(S)$ on $\kC(S)$ corresponds to the conjugacy action of $\kG^\pm(S)$ on the profinite set of inertia groups:

\begin{definition}\label{inertiapreserving} Let $\Aut^{\I}(\kPG(S))$\index{$\Aut^{\I}(\kPG(S))$, the group of automorphisms which preserve the set of conjugacy classes of procyclic inertia groups} 
(resp.\ $\Aut^\I(\kG(S))$\index{$\Aut^{\I}(\kG(S))$, the group of automorphisms which preserve the set of conjugacy classes of procyclic inertia groups} and
$\Aut^{\I}(\kG^\pm(S))$\index{$\Aut^{\I}(\kG^\pm(S))$, the group of automorphisms which preserve the set of conjugacy classes of procyclic inertia groups}) be the subgroup of $\Aut(\kPG(S))$ 
(resp.\ $\Aut(\kG(S))$ and $\Aut^{\I}(\kG^\pm(S))$) consisting of those automorphisms which preserve the set of conjugacy classes of procyclic inertia groups. 
\end{definition}

\begin{remark}From \cite[Proposition~7.2]{BF}, it follows that the elements of $\Aut^{\I}(\kPG(S))$, $\Aut^\I(\kG(S))$ and $\Aut^{\I}(\kG^{\pm}(S))$ preserve the
set of conjugacy classes of \emph{all} inertia groups.
\end{remark}

There is then a series of natural monomorphisms:
\[\Inn(\kG^{\pm}(S))\subseteq\Aut^\I(\kG(S))\hookra\Aut^\I(\kPG(S))\hookra\Aut(\kC(S)).\]
Unlike in the topological case, however, this is not going to be a series of isomorphisms. A procongruence analogue 
of the pants graph $C_P(S)$ turns out to be more useful here.
 
The {\em procongruence pants complex $\kC_P(S)$}\index{$\kC_P(S)$, the procongruence pants complex of $S$} is the $1$-dimensional
simplicial profinite complex which realizes the inverse limit of the quotients of $C_P(S)$ by the action of congruence levels of $\G(S)$
(cf.\ \cite[Section~6.2]{BF}, for the precise definition). It is also endowed, by definition, with
a natural continuous action of $\kG^\pm(S)$. The profinite set of vertices of $\kC_P(S)$ identifies with the profinite set of facets of $\kC(S)$ and 
so it parameterizes the profinite set $\{\hI_\s|\, \s\in\kC(S)_{d(S)-1}\}$ of maximal abelian subgroups of $\kPG(S)$ topologically generated 
by profinite Dehn twists. 
The natural continuous action of $\kG^\pm(S)$ on $\kC_P(S)$ is then induced by the conjugacy action of $\kG^\pm(S)$ on this profinite set.

The main result of the paper is an analogue of the series of isomorphisms~\eqref{Margalitiso} and~\eqref{nonoriso1} 
and of the identities~\eqref{nonoriso2}:

\begin{theorem}\label{autpants}For  $S$ a connected hyperbolic surface, we have:
\begin{enumerate}
\item There is an isomorphism $\Inn(\kG^{\pm}(S))\cong\Aut(\kC_P(S))$.
\item For $d(S)>1$ and $S\neq S_{1,2},S_2$, there is a series of natural isomorphisms:
\[\Inn(\kG^{\pm}(S))=\Aut^{\I}(\kG^{\pm}(S))\cong\Aut^{\I}(\kPG^{\pm}(S)).\] 
\item For $S= S_{1,2}$ or $S_2$, there holds:
\[\Aut^{\I}(\kG^{\pm}(S))=\Inn(\kG^{\pm}(S))\times\langle\exp\zeta\rangle.\] 
\end{enumerate}
\end{theorem} 

\begin{remark}\label{I-condition}
The condition on automorphisms in Theorem~\ref{autpants} is slightly more restrictive than the one considered
in \cite{BF}, where we denoted by $\Aut^\ast(\kG(S))$ the subgroup of $\Aut(\kG(S))$ consisting of those automorphisms which preserve the
conjugacy classes of \emph{decomposition subgroups of} $\kG(S)$ (cf.\ \cite[Definition~7.1]{BF}). It is not difficult to see that there is an inclusion
$\Aut^{\I}(\kG(S))\subseteq\Aut^\ast(\kG(S))$ which for $d(S)>1$ and $Z(\kG(S))=\{1\}$ is indeed an equality but otherwise is strict.
\end{remark}

Throughout this article, we will make use of several results established in our former paper \cite{BF}, 
which studies the procongruence curve and pants complexes and has a 
large overlap with the article \cite{Lochak}, for the reasons explained there. As the referee 
pointed out to us, other notions of Dehn twists and curve complexes  
appeared in \cite{Gropper,Kumpitsch,Wilkes}, both in the profinite and pro-$p$ context.

Let us also observe, answering another of the referee's queries, that the isomorphism $\Inn(\kG^{\pm}(S))\cong\Aut(\kC_P(S))$ of 
(i), Theorem~\ref{autpants} is entirely different from the (only apparently) similar isomorphism stated in \cite[Theorem~7.1]{Lochak}. 
In fact, in Lochak's statement the procongruence graph $\kC_P(S)$ is replaced by the pro-object $\kC_P(S)_{st}$ 
determined by all orbifold quotients of the \emph{2-dimensional pants complex} by finite index subgroups of $\G(S)$ which are open 
for the congruence topology. Now, if $X$ denotes the 2-dimensional pants complex and $X^\l$  one of such orbifold quotients, since $X$ is simply connected, 
every automorphism of $X^\l$ lifts to an automorphism of $X$, so that \cite[Theorem~7.1]{Lochak} is a formal consequence (cf.\ \cite[Section~7.4]{Lochak}) 
of Margalit's rigidity theorem (cf.\ \cite[Theorem~1]{Margalit}).

Note that the first item of Theorem~\ref{autpants} is just an improvement, though a substantial one, of \cite[Theorem~8.1]{BF}. 
On the other hand, the isomorphisms in the second and third items of Theorem~\ref{autpants} are surprising new results 
and should be regarded as the main achievement of this paper. In fact, in the topological case, for a connected hyperbolic surface $S$ 
such that $d(S)>1$ and $S\neq S_{1,2},S_2$, there is a series of identities: 
\[\Aut^{\I}(\G(S))=\Inn(\G^{\pm}(S))=\Aut^{\I}(\G^\pm(S)).\] 
However, there is  no analogous result for the procongruence mapping class group $\kG(S)$, since its automorphism group $\Aut^{\I}(\kG(S))$ 
is quite far from only consisting of the automorphisms induced by restriction of inner automorphisms of $\kG^{\pm}(S)$. 
As a matter of fact, the outer automorphism group $\Out^{\I}(\kG(S))$\index{$\Out^{\I}(\kG(S))$, the group of outer automorphisms of $\kG(S)$ which preserve the conjugacy classes of procyclic inertia groups} 
contains a copy of the absolute Galois group of the rationals $G_\Q$ (cf.\ \cite[Corollary~7.6]{Boggi}).



A first interesting application of Theorem~\ref{autpants} is the following. Since $\Inn(\kG^{\pm}(S))$ identifies with a subgroup of $\Aut^{\I}(\kG(S))$ 
and the absolute Galois group $G_\Q$ embeds in $\Out^{\I}(\kG(S))$, we have (cf.\ \cite[Proposition~4, (ii)]{LS}, for a similar result for 
the profinite Grothendieck-Teichm\"uller group $\GT$):

\begin{corollary}\label{selfnorm}For a connected hyperbolic surface $S$ such that $d(S)>1$, the subgroup $\Inn(\kG^{\pm}(S))$ 
is its own normalizer in $\Aut^{\I}(\kG(S))$. In particular, the image of an element of $G_\Q$ 
corresponding to complex conjugation is self-centralizing in $\Out^{\I}(\kG(S))$.
\end{corollary}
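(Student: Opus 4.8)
The plan is to deduce the statement formally from Theorem~\ref{autpants}, after reducing the self-normalizing property to a claim about the order-two subgroup generated by complex conjugation. Write $A=\Aut^{\I}(\kG(S))$, $I=\Inn(\kG(S))$ and $N=\Inn(\kG^{\pm}(S))$, all viewed as subgroups of $A$ via the conjugation action of $\kG^{\pm}(S)$ on its normal subgroup $\kG(S)$; here $I\trianglelefteq A$, $I\subseteq N\subseteq A$ and $[N:I]=2$, the nontrivial coset being represented by the inner action of an orientation-reversing class $c$ whose image $\bar c\in O:=A/I=\Out^{\I}(\kG(S))$ is, by the real-anabelian description of $\kG^{\pm}(S)$, the class of complex conjugation. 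The first observation is purely formal: since $\bar c$ has order two, $N$ is exactly the preimage $\pi^{-1}(\langle\bar c\rangle)$ under $\pi\colon A\to O$, so
\[
N_A(N)=\pi^{-1}\bigl(N_O(\langle\bar c\rangle)\bigr)=\pi^{-1}\bigl(C_O(\bar c)\bigr),
\]
using that the normalizer of an order-two subgroup equals the centralizer of its generator. Hence $N_A(N)=N$ if and only if $C_O(\bar c)=\langle\bar c\rangle$; this equivalence, which needs no hypothesis on centers, already yields the ``in particular'' clause once the first assertion is proved.

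It therefore remains to show $N_A(N)\subseteq N$. First I would take $\phi\in N_A(N)$ and consider the automorphism $\psi\colon n\mapsto\phi n\phi^{-1}$ of $N$; under the identification $N\cong\kG^{\pm}(S)$ (valid once $C_{\kG^{\pm}(S)}(\kG(S))=\{1\}$, which for $d(S)>1$ reduces to $Z(\kG(S))=\{1\}$, so that the conjugation map is injective) this is an automorphism of $\kG^{\pm}(S)$. The key step is to verify that $\psi$ preserves the conjugacy classes of procyclic subgroups generated by profinite Dehn twists, i.e.\ that $\psi\in\Aut^{\I}(\kG^{\pm}(S))$. This is forced by $\phi\in\Aut^{\I}(\kG(S))$: a profinite Dehn twist $\tau$ of $\kG^{\pm}(S)$ corresponds to $\inn_\tau\in I\subseteq N$, and $\psi(\inn_\tau)=\inn_{\phi(\tau)}$; since $\phi$ carries $\langle\tau\rangle$ to a $\kG(S)$-conjugate of a Dehn-twist procyclic group, the image $\langle\inn_{\phi(\tau)}\rangle$ is an $N$-conjugate of such a group, which is exactly the required compatibility.

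Granting this, Theorem~\ref{autpants} gives $\Aut^{\I}(\kG^{\pm}(S))=\Inn(\kG^{\pm}(S))$, so $\psi$ is inner: $\psi(n)=n_0 n n_0^{-1}$ for some $n_0\in N$ and all $n\in N$. Consequently $n_0^{-1}\phi$ centralizes $N$, hence centralizes $I$; but an element $\chi\in A$ centralizing every $\inn_g$ satisfies $\inn_{\chi(g)}=\inn_g$, i.e.\ $\chi(g)g^{-1}\in Z(\kG(S))=\{1\}$ for all $g$, whence $\chi=\mathrm{id}$ and $C_A(I)=\{1\}$. Therefore $n_0^{-1}\phi=\mathrm{id}$, so $\phi=n_0\in N$; this proves $N_A(N)=N$ and, by the equivalence of the first paragraph, $C_O(\bar c)=\langle\bar c\rangle$, i.e.\ the image of complex conjugation is self-centralizing in $\Out^{\I}(\kG(S))$.

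The hard part is not the formal skeleton but keeping the center under control: the identifications $N\cong\kG^{\pm}(S)$, $I\cong\kG(S)$ and the vanishing $C_A(I)=\{1\}$ all rest on $Z(\kG(S))=\{1\}$, which holds for $d(S)>1$ outside a short explicit list of surfaces (those carrying a central hyperelliptic involution). For those residual cases I would argue by hand, replacing the clean identity $C_A(I)=\{1\}$ by the observation that the finitely many ``central'' automorphisms of $\kG(S)$ that can occur in $C_A(I)$ already lie in $N$, so that the conclusion $\phi\in N$ persists. The genuinely substantive input is, as expected, Theorem~\ref{autpants}; everything else is extension-group bookkeeping.
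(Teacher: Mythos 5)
Your argument is correct and is essentially the deduction the paper intends: the corollary is stated as an immediate consequence of Theorem~\ref{autpants} (citing the analogous genus-$0$ argument of Lochak--Schneps), and the underlying mechanism is exactly yours --- an element of $\Aut^{\I}(\kG(S))$ normalizing $\Inn(\kG^{\pm}(S))$ induces an automorphism of $\kG^{\pm}(S)$ preserving the Dehn-twist procyclic classes, hence inner by the theorem, with the residual centralizer $C_A(I)$ killed by the $\I$-condition (which, as in the proof of Lemma~\ref{faithful}, also disposes of the central cases $S_{1,2}$ and $S_{2,0}$ that you rightly flag). Your formal reduction of the ``in particular'' clause to $C_{\Out^{\I}(\kG(S))}(\bar c)=\langle\bar c\rangle$ is likewise the intended reading.
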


The analogy between the second item of Theorem~\ref{autpants} and the corresponding isomorphisms in the topological case
$\Inn(\G^{\pm}(S))=\Aut(\G^{\pm}(S))\cong\Aut(\PG^{\pm}(S))$, for $d(S)>1$ and $S\neq S_{1,2},S_2$,
falls short only in that we do not know whether there holds $\Aut^{\I}(\kG^{\pm}(S))=\Aut(\kG^{\pm}(S))$. 
However, thanks to a recent result by Hoshi, Minamide and Mochizuki (cf.\ \cite{HMM}), we are able to fill this gap in genus $0$. 
Since, in this case, we also know that the congruence subgroup property holds (i.e.\ $\hG^{\pm}(S)=\kG^{\pm}(S)$), we get:

\begin{theorem}\label{genus0}For $g(S)=0$ and $n(S)\geq 5$, there holds:
\[\Inn(\hG^{\pm}(S))=\Aut(\hG^{\pm}(S))=\Aut(\hPG^{\pm}(S)).\]
\end{theorem}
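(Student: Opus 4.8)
The plan is to derive Theorem~\ref{genus0} from Theorem~\ref{autpants} by upgrading the $\I$-restricted automorphism groups to the full automorphism groups in the genus $0$ range. First I would record the two structural inputs that make Theorem~\ref{autpants} applicable here: for $g(S)=0$ and $n(S)\geq 5$ one has $d(S)=n(S)-3\geq 2>1$, and, since the congruence subgroup property holds in genus $0$, the profinite and procongruence completions coincide, so that $\hG^{\pm}(S)=\kG^{\pm}(S)$ and $\hPG^{\pm}(S)=\kPG^{\pm}(S)$. Consequently Theorem~\ref{autpants} already provides the chain
\[\Inn(\hG^{\pm}(S))=\Aut^{\I}(\hG^{\pm}(S))\cong\Aut^{\I}(\hPG^{\pm}(S)),\]
and the only thing left to prove is that here $\Aut^{\I}=\Aut$ on both groups.

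The crux is therefore to show that \emph{every} continuous automorphism of $\hPG^{\pm}(S)$, and of $\hG^{\pm}(S)$, automatically preserves the set of conjugacy classes of procyclic subgroups generated by profinite Dehn twists. This is precisely where the result of Hoshi, Minamide and Mochizuki~\cite{HMM} is invoked. In genus $0$ the pure mapping class group $\PG(S)$ is the fundamental group of the moduli space $\cM(S)$ and is a configuration space group, a successive extension of finitely generated free groups, so that its profinite completion falls within the scope of the combinatorial anabelian theory of~\cite{HMM}. That theory furnishes an intrinsic group-theoretic characterization of the cuspidal inertia subgroups of such a group, equivalently of the procyclic subgroups topologically generated by profinite Dehn twists. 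Being group-theoretic, this characterization is preserved by every automorphism, and hence $\Aut(\hPG^{\pm}(S))=\Aut^{\I}(\hPG^{\pm}(S))$, and likewise for $\hG^{\pm}(S)$.

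Granting this, I would conclude in two lines. Substituting $\Aut=\Aut^{\I}$ into the chain above yields $\Inn(\hG^{\pm}(S))=\Aut(\hG^{\pm}(S))$, while the isomorphism $\Aut^{\I}(\hG^{\pm}(S))\cong\Aut^{\I}(\hPG^{\pm}(S))$ of Theorem~\ref{autpants}, now read as an isomorphism $\Aut(\hG^{\pm}(S))\cong\Aut(\hPG^{\pm}(S))$, gives the remaining equality. The main obstacle is entirely contained in the second paragraph: one must match the genus $0$ extended mapping class group with the configuration space group to which~\cite{HMM} applies and verify that the distinguished subgroups singled out there are exactly the Dehn twist procyclic groups relevant to the $\I$-condition. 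In particular, the orientation-reversing elements and the puncture-permuting quotient, which take one outside the pure, orientation-preserving configuration space group on which~\cite{HMM} is formulated, require careful bookkeeping and are what prevents this from being a purely formal corollary.
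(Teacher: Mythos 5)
Your overall strategy matches the paper's: reduce to Theorem~\ref{autpants} via the congruence subgroup property in genus $0$, and then show that every automorphism of $\hG^{\pm}(S)$ and of $\hPG^{\pm}(S)$ automatically satisfies the $\I$-condition, with the Hoshi--Minamide--Mochizuki result as the essential external input. The concluding two lines are also exactly how the paper finishes.

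However, there is a genuine gap at the step where you write that the group-theoretic characterization of cuspidal inertia subgroups ``is preserved by every automorphism, and hence $\Aut(\hPG^{\pm}(S))=\Aut^{\I}(\hPG^{\pm}(S))$, and likewise for $\hG^{\pm}(S)$.'' The characterization from \cite{HMM} lives on the configuration space group, i.e.\ it yields $\Aut(\hPG(S))=\Aut^{\I}(\hPG(S))$ (in the paper this is quoted as Lemma~3.13 in \cite{Aut} together with Corollary~2.8 in \cite{HMM}). To transfer this to $\hPG^{\pm}(S)$ and $\hG^{\pm}(S)$ you must first know that an arbitrary automorphism of these larger groups restricts to an automorphism of $\hPG(S)$, i.e.\ that $\hPG(S)$ is a \emph{characteristic} subgroup of each of them. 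You correctly flag the orientation-reversing elements and the puncture-permuting quotient as the obstruction, but you give no argument, and this is precisely where most of the work in the paper's proof lies. For $\hPG^{\pm}(S)$ the paper observes that $\hPG(S)$ is the unique torsion-free index $2$ subgroup; for $\hG^{\pm}(S)$ it proves (Lemma~\ref{mpowers}) that, by Artin's classical theorem on braids and permutations, the epimorphisms $\G(S_{0,n})\to\Sigma_n$ and $\G^{\pm}(S_{0,n})\to\Sigma_n$ are unique up to automorphisms of $\Sigma_n$ for $n\geq 5$ (using also that there is no epimorphism onto $A_n$), and then passes to profinite completions via a criterion (Lemma~\ref{charcriterion}) resting on the Nikolov--Segal theorem that abstract epimorphisms from a finitely generated profinite group to a finite group are continuous. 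Without some such argument your appeal to \cite{HMM} does not reach the groups $\hPG^{\pm}(S)$ and $\hG^{\pm}(S)$, so the proposal as written does not close.
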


\begin{remark}Theorem~\ref{genus0} should be compared with \cite[Corollary C]{HMM} which states that the group $\Out(\hPG(S))$ is isomorphic 
to the direct product of the profinite Grothendieck-Teichm\"uller group with the group of permutations of the punctures 
(cf.\ also \cite{MN}, for the case of profinite braid groups).
\end{remark}

A group $G$ is \emph{complete} if its center is trivial and all its automorphisms are inner, so that there is a natural isomorphism 
$G\cong\Aut(G)$. Theorem~\ref{genus0} then provides, to our knowledge, the first example of a finitely generated, infinite, residually finite, 
complete group whose profinite completion is also complete.

Let us recall that the \'etale fundamental group of an algebraic variety $X$ over $\Spec\R$ fits into the short exact sequence:
\begin{equation}\label{fundexsequence}
1\to\pi_1^\mathrm{et}(X_\C)\to\pi_1^\mathrm{et}(X)\to G_\R\to 1,
\end{equation}
where we let $X_\C:=X\times\Spec\C$ and omit base points, while $G_\R\cong\Z/2$ is the absolute Galois group of the reals, generated by 
complex conjugation.

The short exact sequence~\eqref{fundexsequence} determines a representation $\rho_\R\co G_\R\to\Out(\pi_1^\mathrm{et}(X_\C))$ and we let
$\Out_{G_\R}(\pi_1^\mathrm{et}(X_\C))$ be the centralizer of the image of $\rho_\R$ in $\Out(\pi_1^\mathrm{et}(X_\C))$.
Let also $\Aut_\R(X)$ be the group of automorphisms of $X$ defined over $\Spec\R$.
We say that $X$ satisfies the Aut-form of Grothendieck \emph{real} anabelian conjecture if there is a natural isomorphism:
\[\Aut_\R(X)\cong\Out_{G_\R}(\pi_1^\mathrm{et}(X_\C)).\]

Let us assume that the center of the group $\pi_1^\mathrm{et}(X_\C)$ is trivial and that the short exact sequence~\eqref{fundexsequence}
identifies the latter group with a characteristic subgroup of $\pi_1^\mathrm{et}(X)$. 
From \cite[Corollary~1.5.7 and Lemma~1.6.2 (or, better, its proof)]{N}, it then follows that there is a natural isomorphism:
\[\Out_{G_\R}(\pi_1^\mathrm{et}(X_\C))\cong\Out(\pi_1^\mathrm{et}(X)).\]
Thus, in this case, the Aut-form of Grothendieck real anabelian conjecture can also be formulated, in an \emph{absolute} form, 
by saying that there is a natural isomorphism:
\[\Aut_\R(X)\cong\Out(\pi_1^\mathrm{et}(X)).\]

\begin{remark}For more details on the Grothendieck anabelian conjectures and results in this direction, 
we refer to the survey \cite{MNT}, treating the case of curves, and to \cite{HM,Moch2} for the case 
of configuration spaces on curves. 
\end{remark}

Theorem~\ref{genus0} can then be rephrased as a real anabelian property for the moduli spaces $\cM_{0,n}$ of $n$-pointed, genus  zero curves.
Let $(\cM_{0,n})_\R:=\cM_{0,n}\times\Spec\R$ and let $\pi_1^\mathrm{et}((\cM_{0,n})_\R)$ be its \'etale fundamental group for some 
choice of geometric base point. Let us denote by $\Sigma_n$ the symmetric group on $n$ letters. We have:

\begin{corollary}\label{absanabelian}For $n\geq 5$, there is a natural isomorphism:
\[\Aut_\R((\cM_{0,n})_\R)\cong\Out(\pi_1^\mathrm{et}((\cM_{0,n})_\R))\cong\Sigma_n.\]
\end{corollary}
\medskip

\begin{remark}The rational version of the absolute anabelian property for $\cM_{0,n}$
was proved by Ihara and Nakamura in \cite{IN} (cf.\ Corollary~C ibid.).
\end{remark}

A few words about the proof of Theorem~\ref{autpants}. As we already observed above, the isomorphism 
$\Inn(\kG^{\pm}(S))\cong\Aut(\kC_P(S))$ is a substantial refinement of 
\cite[Theorem~8.1]{BF}. It is obtained by showing that there is indeed a coherent and symmetric way to define an orientation on the 
procongruence pants complex $\kC_P(S)$ (cf.\ Section~\ref{orientpants}). The proof of the identity $\Inn(\kG^{\pm}(S))=\Aut^{\I}(\kPG^{\pm}(S))$ 
is then based on a further improvement of the above isomorphism. An element $f$ of $\Aut^{\I}(\kPG(S))$ acts on the vertex set of $\kC_P(S)$, 
since this is naturally identified with the profinite set $\{\hI_\s|\, \s\in\kC(S)_{d(S)-1}\}$. 
Theorem~\ref{mainlemma} states that $f$ is induced by an inner automorphism 
of $\kG^{\pm}(S)$ as soon as this action sends the vertex set of an edge of $\kC_P(S)$ to the vertex set of another edge. Hence, 
in order to prove the identities of items (ii) and (iii) of Theorem~\ref{autpants}, we have to show that the elements of 
$\Aut^{\I}(\kPG(S))$ induced by the restriction of elements of $\Aut^{\I}(\kPG^{\pm}(S))$ have this property. 
By an induction argument, we are able to reduce to the case $g(S)=0$.

We then proceed by considering the action of antiholomorphic involutions of $\kPG^{\pm}(S)$ on the procongruence curve complex $\kC(S)$.
By Lemma~\ref{preserve}, for $g(S)=0$, there is only one $\kG^{\pm}(S)$-conjugacy class of antiholomorphic involutions in $\kPG(S)$. Thus,
after composing with an inner automorphism of $\kG^{\pm}(S)$, we can assume that a given automorphism of $\kPG^{\pm}(S)$ 
fixes an antiholomorphic involution and so preserves its fixed-point locus in $\kC(S)$. 

By Lemma~\ref{fixed} and Proposition~\ref{boundaryfixedpro}, 
such a fixed-point locus is finite and consists of isotopy classes of simple closed curves on $S$ which have between them geometric intersection 
either $0$ or $2$. Since pairs of curves with geometric intersection $2$ correspond to edges of the pants complex, we can apply 
Theorem~\ref{mainlemma} and conclude that the given automorphism is in fact induced by an inner automorphism of $\kG^{\pm}(S)$.
\medskip

\noindent
{\bf Acknowledgements}. We thank Javier Aramayona and Dan Margalit for some useful comments on a preliminary version of this manuscript
which helped us to spot an error in the proof and improve the main result of the paper. We thank the referees for their careful reading and 
for their thorough comments, which led us to add many important details and improve the presentation.

\section{Two preliminary results}\label{preliminaryresults}
The following two sections will be devoted to improving one of the main results of \cite{BF} (cf.\ \cite[Theorem~8.1]{BF} 
and Theorem~\ref{completepantsrigidity}). For the convenience of the reader, we will briefly recall some notations and 
basic constructions from \cite{BF} and refer to that paper for more details. 

\subsection{Augmented Teichm\"uller spaces and procongruence moduli stacks of curves}\label{augmented}
Let $\cT(S)$\index{$\cT(S)$, the Teichm\"uller space of $S$} be the Teichm\"uller space associated to the surface $S$ and 
$\ol{\cT}(S)$\index{$\ol{\cT}(S)$, the augmented Teichm\"uller space of $S$} be the \emph{augmented Teichm\"uller space} (in \cite[Section~6.2]{BF}, we called it the Bers bordification of $\cT(S)$).
The latter can be defined as the completion of the Teichm\"uller space $\cT(S)$ with respect to the Weil--Petersson metric (cf.\ \cite[Theorem~4]{Wolpert}).

The augmented Teichm\"uller space $\ol{\cT}(S)$ is a partial $\G^\pm(S)$-equivariant compactification of $\cT(S)$ such that the quotient 
$\ol{\cT}(S)/\G(S)$ is isomorphic to the coarse moduli space $\ol{M}(S)$\index{$\ol{M}(S)$, the coarse moduli space of the Deligne--Mumford compactification  $\bM(S)$} of the DM compactification 
$\bM(S)$\index{$\bM(S)$, the Deligne--Mumford compactification of $\cM(S)$} of $\cM(S)$.

For $S$ a disconnected hyperbolic surface, let $\cT(S)$ (resp.\ $\ol{\cT}(S)$) be the direct product of the Teichm\"uller spaces (resp.\ augmented
Teichm\"uller spaces) associated to the connected components of the surface $S$. The closed strata of codimension $k+1$ in $\ol{\cT}(S)$ 
of the boundary $\partial\ol{\cT}(S):=\ol{\cT}(S)\ssm\cT(S)$ are then parameterized by the $k$-simplices of $C(S)$, and, for $\s\in C(S)_k$, 
there is a natural isomorphism $\partial\ol{\cT}(S)_\s\cong\ol{\cT}(S\ssm\s)$, where we denote by $\partial\ol{\cT}(S)_\s$ the closed stratum
associated to $\s$.

Let us denote by $\wt{\cF}(S)$ the $1$-dimensional stratum of the boundary $\partial\ol{\cT}(S)$. Then, each irreducible component of $\wt{\cF}(S)$
is isomorphic to the cuspidal bordification $\ol{\H}=\H\cup\P^1(\Q)$ of the hyperbolic plane $\H$ and $C_P(S)$ identifies with the $1$-skeleton of a 
$\G(S)$-equivariant triangulation of $\wt{\cF}(S)$.

Let $\{\cM(S)^\l\}_{\l\in\Lambda}$  and $\{\bM(S)^\l\}_{\l\in\Lambda}$ be, respectively, the inverse systems of all congruence level structures 
over $\cM(S)$ and of their compactifications over $\bM(S)$ (cf.\ \cite[Section~2.5 and Section~4.1]{BF}).
Let then $\M(S):=\varprojlim_{\l\in\Lambda}\cM(S)^\l$ \index{$\M(S)$, the inverse limit of congruence level structures over $\cM(S)$} and 
$\ol{\M}(S):=\varprojlim_{\l\in\Lambda}\bM(S)^\l$ \index{$\ol{\M}(S)$, the inverse limit of congruence level structures over $\bM(S)$} be their respective inverse limits. 
There is a natural action of $\kG^{\pm}(S)$ on $\M(S)$ and $\ol{\M}(S)$ and a natural $\G^{\pm}(S)$-equivariant embedding $\ol{\cT}(S)\hookra\ol{\M}(S)$
with dense image, where $\G^{\pm}(S)$ acts on $\ol{\M}(S)$ via the natural monomorphism $\G^{\pm}(S)\hookra\kG^{\pm}(S)$.

For $\G^\l$ a level of $\G(S)$ contained in an abelian level $\G(m)$ for some $m\geq 3$, there is a natural
isomorphism $\bM(S)^\l\cong\ol{\cT}(S)/\G^\l$. Let us denote by $\cF^\l(S)$ the $1$-dimensional stratum of the DM boundary $\partial\bM(S)^\l$.
Then, the quotient $C_P^\l(S):=C_P(S)/\G^\l$ identifies with the $1$-skeleton of a $\G(S)/\G^\l$-equivariant triangulation of $\cF^\l(S)$.
Therefore, the inverse limit $\kC_P(S):=\varprojlim_\l C_P^\l(S)$ of all such finite quotients, taken in the category of 
simplicial profinite complexes, identifies with the $1$-skeleton of a $\kG(S)$-equivariant 
triangulation of the $1$-dimensional stratum $\F(S)=\varprojlim_\l\cF^\l(S)$ of the DM boundary $\partial\ol{\M}(S):=\ol{\M}(S)\ssm\M(S)$
(cf.\  \cite[Proposition~8.3]{BF} and preceding discussion).

\subsection{Automorphisms of the procongruence pants complex and orientations}\label{orientpants}
For $d(S)=1$, the pants complex $C_P(S)$ coincides with the Farey graph $\rm F$.\index{$\rm F$, the Farey graph} This is the $1$-skeleton of a $2$-dimensional simplicial complex
$\Delta$, whose geometric realization $|\Delta|$ is identified with a tessellation of the cuspidal bordification $\ol{\H}$ of the hyperbolic plane. 
The homeomorphism $|\Delta|\cong\ol{\H}$ becomes a conformal isomorphism when $|\Delta|$ is given the piecewise equilateral flat structure.
The \emph{orientation of the Farey graph} $\rm F$ (and hence of $C_P(S)$ for $d(S)=1$) is then simply the orientation of $\Delta$ associated to
the complex structure of $\ol{\H}$. 

For $\G^\l$ a finite index subgroup of $\Aut^+(\rm F)\cong\PSL_2(\Z)$ contained in an abelian level $\G(m)$, for $m\geq 2$, 
the quotient $\rm F^\l:=\rm F/\G^\l$ is the $1$-skeleton of the triangulation $\Delta^\l:=\Delta/\G^\l$ of the closed Riemann surface $\ol{\H}/\G^\l$
induced by the Farey triangulation on $\ol{\H}$. The \emph{orientation of the quotient graph} $\rm F^\l$ is then the orientation 
of the triangulation $\Delta^\l$ induced by the complex structure of $\ol{\H}/\G^\l$. For $\G^{\l'}\leq\G^\l$, the induced map $\rm F^{\l'}\to\rm F^\l$
respects orientations, so that we obtain an orientation on the inverse limit $\hF:=\varprojlim_\l\rm F^\l$ of these finite quotients 
(the \emph{profinite Farey graph}, which, again, we realize as a $1$-dimensional simplicial profinite complex) and, in particular, 
for the procongruence pants complex $\kC_P(S)\cong\hF$, for  $d(S)=1$. 

For a multicurve $\s$, let $C_P(S\ssm\s)$ be the disjoint union of the pants complexes associated to the connected components of $S\ssm\s$ and
let $d(S\ssm\s)$ be the sum of the modular dimensions of the connected components of $S\ssm\s$.
For $\s\in C(S)_{d(S)-2}$, we have that $d(S\ssm\s)=1$, so that the pants complex $C_P(S\ssm\s)$ is isomorphic to the Farey graph $\rm F$ and
identifies with a subgraph of $C_P(S)$, which we denote by $\rm F_\s$. The pants complex $C_P(S)$ is then the infinite union 
of the Farey subgraphs $\{\rm F_\s\}_{\s\in C(S)_{d(S)-2}}$ and we give each of them the orientation defined above.
A corollary of Margalit's series of isomorphisms~\eqref{Margalitiso} is then that automorphisms of $C_P(S)$ either preserve or reverse
all orientations of the Farey subgraphs.

The procongruence pants complex $\kC_P(S)$, for  $d(S)>1$ is the union of the profinite set of profinite Farey graphs 
$\{\hF_\s\}_{\s\in\kC(S)_{d(S)-2}}$, each naturally associated to a $(d(S)-2)$-simplex of the procongruence 
curve complex $\kC(S)$ (cf.\ \cite[Definition~6.3]{BF}), and we give each of them the orientation defined above.
However, it is not clear whether the automorphisms of $\kC_P(S)$ act in synchrony on the orientations of its profinite Farey subgraphs.

Let us denote by $O(S)$ the finite set of the topological types of $(d(S)-1)$-multicurves on $S$. To remedy the above issue, in \cite{BF}, we 
associated a character $\chi_\s\co\Aut(\kC_P(S))\to\{\pm 1\}$ to each $\s\in O(S)$ in the following way. The tautological action of $\Aut(\kC_P(S))$ 
on $\kC_P(S)$ preserves profinite Farey subgraphs and $\kG(S)$-orbits of profinite Farey subgraphs. We assigned to an automorphism 
$\phi\in\Aut(\kC_P(S))$ the plus or minus sign according to whether $\phi$ sends or not the fixed orientation of $\hF_\s$ to the orientation 
of $\phi(\hF_\s)$. We then proved that there is an exact sequence, for $d(S)>1$ and $S\neq S_{1,2}$ (cf.\ \cite[Theorem~8.1]{BF}):
\begin{equation}\label{originalexact}
1\to\Inn(\kG(S))\to \Aut(\kC_P(S))\to \prod_{O(S)} \{\pm 1\},
\end{equation}
where, for $S= S_{1,2}$, the group $\Aut(\kC_P(S_{1,2}))$ has to be replaced with the subgroup of 
those automorphisms preserving the set of separating curves.

\begin{remark}\label{nontrivial}It is easy to check that all characters $\chi_\s\co\Aut(\kC_P(S))\to\{\pm 1\}$, for $\s\in O(S)$, are nontrivial.
For this, let us observe that the action of an element $f\in\G^{\pm}(S)\ssm\G(S)$ on the augmented Teichm\"uller space $\ol{\cT}(S)$ 
inverts the orientation of every open stratum of this space. This implies that, for all $\s\in O(S)$, we have $\chi_\s(\inn f)=-1$.
\end{remark}

The first result of this section is an improved version of \cite[Theorem~8.1]{BF}. We will show that all the above characters 
are in fact synchronized and that the case $S= S_{1,2}$ is not exceptional, so that we have:

\begin{theorem}\label{completepantsrigidity}
For a connected hyperbolic surface $S$ such that $d(S)>1$, there is a short exact sequence:
\[1\to\Inn(\kG(S))\to \Aut(\kC_P(S))\to\{\pm 1\}\to 1.\] 
\end{theorem}

\subsection{Proof of Theorem~\ref{completepantsrigidity} for $S=S_{0,5}$}\label{05case}
There is only one topological type of $0$-simplices in $C(S_{0,5})$. Hence, the exact sequence~\eqref{originalexact} takes the simple form:
\[1\to\Inn(\kG(S_{0,5}))\to \Aut(\kC_P(S_{0,5}))\to\{\pm 1\}.\]

From Remark~\ref{nontrivial}, it then immediately follows that the above sequence is also right exact, which proves Theorem~\ref{completepantsrigidity} 
for $S=S_{0,5}$. 

\subsection{Proof of Theorem~\ref{completepantsrigidity} for $S=S_{1,2}$}\label{orientS_{1,2}}
To deal with this case, we first need  to analyze in more detail the case $S=S_{0,5}$.
Let $\partial\ol{\M}(S_{0,5})$ be the DM boundary of the inverse limit $\ol{\M}(S_{0,5})$. As explained in Section~\ref{augmented},
the profinite pants complex $\kC_P(S_{0,5})$ identifies with the $1$-skeleton of a triangulation of $\partial\ol{\M}(S_{0,5})$.
Let $\Aut(\partial\ol{\M}(S_{0,5}))$ be the group of automorphisms which restrict to a conformal or an anticonformal map on each
irreducible component and $\Aut^+(\partial\ol{\M}(S_{0,5}))$ its subgroup consisting of conformal automorphisms. 

\begin{lemma}\label{aut+}The natural action of $\kG(S_{0,5})$ on $\partial\ol{\M}(S_{0,5})$ induces an isomorphism 
$\kG(S_{0,5})\cong\Aut^+(\partial\ol{\M}(S_{0,5}))$.
\end{lemma}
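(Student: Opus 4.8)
The plan is to exploit the identification, recalled in Section~\ref{augmented}, of $\kC_P(S_{0,5})$ with the $1$-skeleton of a $\kG(S_{0,5})$-equivariant triangulation of $\partial\ol{\M}(S_{0,5})$, and to play the rigidity of holomorphic maps (the identity theorem) against the pants-complex rigidity of Proposition~\ref{completepantsrigidity}. Concretely, $\partial\ol{\M}(S_{0,5})$ is a union of irreducible components, each a copy of the procongruence moduli space $\ol{\M}(S_{0,4})$ (a profinite Riemann surface), glued along $0$-dimensional strata; on each component the triangulation restricts to the Farey triangulation, whose $1$-skeleton is a profinite Farey subgraph $\hF_\s$. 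Since $\kG(S_{0,5})$ acts holomorphically, the natural action defines a homomorphism $\rho\co\kG(S_{0,5})\to\Aut^+(\partial\ol{\M}(S_{0,5}))$, and I must show $\rho$ is an isomorphism.

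First I would introduce the restriction map $r\co\Aut^+(\partial\ol{\M}(S_{0,5}))\to\Aut(\kC_P(S_{0,5}))$. Its well-definedness is the first point to settle: a conformal automorphism must carry the triangulation to itself, hence act on the $1$-skeleton. This is where the discussion of Section~\ref{orientpants} is essential — the $0$-cells of the triangulation are the $0$-dimensional boundary strata, which are intrinsic to $\partial\ol{\M}(S_{0,5})$, and on each component the Farey triangulation is the one recording the piecewise equilateral flat structure determined by the complex structure; thus a conformal map preserves the cells and, being orientation preserving on each component, the orientation of every $\hF_\s$. By the character description preceding Proposition~\ref{completepantsrigidity}, $r$ therefore lands in the kernel of the sign homomorphism $\Aut(\kC_P(S_{0,5}))\to\{\pm1\}$, namely $\Inn(\kG(S_{0,5}))$. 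Using $Z(\kG(S_{0,5}))=\{1\}$, this kernel is canonically $\kG(S_{0,5})$, and by construction $r\circ\rho$ is exactly the faithful natural action $\kG(S_{0,5})\to\Inn(\kG(S_{0,5}))\hookra\Aut(\kC_P(S_{0,5}))$.

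Injectivity of $\rho$ is then immediate: if $\rho(g)$ is trivial it acts trivially on the $1$-skeleton, so $g$ lies in the kernel of the faithful natural action and $g=1$. For surjectivity I would take $\phi\in\Aut^+(\partial\ol{\M}(S_{0,5}))$; then $r(\phi)$ has sign $+1$, so $r(\phi)=r(\rho(g))$ for some $g\in\kG(S_{0,5})$, and $\psi:=\phi\circ\rho(g)^{-1}$ is a conformal automorphism fixing the whole $1$-skeleton. It remains to prove that $r$ is injective, i.e.\ that such a $\psi$ is the identity. Here I would argue level by level: on each finite quotient $\bM(S_{0,5})^\l$ the component is an honest Riemann surface and $\psi$ is holomorphic while fixing the real $1$-dimensional Farey graph pointwise, so the identity theorem forces $\psi$ to be the identity there; passing to the inverse limit gives $\psi=\mathrm{id}$ and hence $\phi=\rho(g)$.

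The main obstacle I anticipate is the well-definedness of $r$ — proving that an abstract conformal automorphism of $\partial\ol{\M}(S_{0,5})$ is forced to preserve the Farey triangulation rather than merely the underlying conformal surface. This reduces to the canonicity of the triangulation on each component $\ol{\M}(S_{0,4})$: one must show that the vertices (the deeper boundary strata) together with the complex structure pin down the Farey cell structure, which is exactly the $d(S)=1$ shadow of the statement being proved and the place where the conformal-geometric input of Section~\ref{orientpants} does the real work. The passage to the inverse limit in the identity-theorem step is a secondary technical point, handled by compatibility of the triangulations $\Delta^\l$ under the maps $\rm F^{\l'}\to\rm F^\l$.
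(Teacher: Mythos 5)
Your reconstruction has the right architecture, but the step you yourself flag as ``the main obstacle'' --- well-definedness of the restriction map $r\co\Aut^+(\partial\ol{\M}(S_{0,5}))\to\Aut(\kC_P(S_{0,5}))$, i.e.\ that an abstract conformal automorphism of an irreducible component must carry the Farey triangulation to the Farey triangulation --- is left unproved, and the justification you sketch for it does not work. You say the triangulation is ``the one recording the piecewise equilateral flat structure determined by the complex structure,'' but the implication runs the other way: the equilateral flat structure determines the complex structure (that is how Section~\ref{orientpants} orients the Farey graphs), whereas the complex structure on $\ol{\H}/\G^\l$ together with its set of cusps does not single out the Farey cell structure. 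Indeed at a fixed finite level this is simply false: conformal automorphisms of a modular curve coming from the normalizer of $\G^\l$ in $\PSL_2(\R)$ but not from $\PSL_2(\Z)$ (Fricke/Atkin--Lehner type involutions) permute the cusps without preserving the Farey tessellation. So the preservation of the triangulation can only come from compatibility across the whole congruence tower, which is essentially the entire content of the lemma and is exactly what you have not supplied. Everything downstream depends on this: the sign computation placing $r(\phi)$ in $\Inn(\kG(S_{0,5}))$, and the identity-theorem step (which, as a secondary point, also needs more than ``$\psi$ acts trivially on the combinatorial graph'': you must upgrade this to $\psi$ preserving each $2$-cell with its ideal vertices fixed, or to $\psi$ fixing a geodesic edge pointwise, before the identity theorem applies).

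Note also that your logic runs backwards relative to the paper's. The paper disposes of this lemma by citing Lemma~8.8 of \cite{BF} and only afterwards, in Lemma~\ref{autprocurve}, uses it to show that the natural monomorphism $\Aut(\kC_P(S_{0,5}))\hookra\Aut(\partial\ol{\M}(S_{0,5}))$ of Proposition~8.3 in \cite{BF} --- which goes in the opposite direction to your $r$ --- is onto. Your $r$ is precisely the inverse of that map, so its well-definedness is equivalent to the surjectivity statement that the paper \emph{deduces from} the present lemma; assuming it here is close to assuming what is to be proved. A self-contained argument would instead have to show, by a covering-space/Galois-theoretic argument using the finiteness of the conformal automorphism group of each compact level $\ol{\H}/\G^\l$, that a compatible system of conformal automorphisms of the tower is a limit of deck transformations, i.e.\ lies in the congruence completion of $\PSL_2(\Z)$; the triangulation is then preserved because $\PSL_2(\Z)$ preserves it, not the other way around.
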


\begin{proof}For simplicity, put $S=S_{0,5}$ and, for levels $\G^\l\subseteq\G^\mu\subseteq\G(S)$, let us denote by 
$\hat{\pi}_\l\co\partial\ol{\M}(S)\to\partial\ol{\cM}(S)^\l$ 
and $\pi_{\l\mu}\co\partial\ol{\cM}(S)^\l\to\partial\ol{\cM}(S)^\mu$ the natural projections. Then, 
a conformal automorphism $\phi$ of $\partial\ol{\M}(S)$ determines, and is determined by, 
the directed inverse system of conformal maps constructed as follows.  
For every $\l\in\L$, there is a $\mu\in\L$ such that the composition $\phi_\l:=\hat{\pi}_\l\circ\phi\co\partial\ol{\M}(S)\to\partial\ol{\cM}(S)^\l$ 
factors through a conformal map $\phi_{\mu\l}\co\ol{\cM}(S)^\mu\to\ol{\cM}(S)^\l$. 
It is clear that, if $\phi_{\mu'\l'}\co\ol{\cM}(S)^{\mu'}\to\ol{\cM}(S)^{\l'}$ is another such map with $\G^{\l'}\subseteq\G^\l$
and $\G^{\mu'}\subseteq\G^\mu$, we have $\pi_{\l'\l}\circ\phi_{\mu'\l'}=\phi_{\mu\l}\circ\pi_{\mu'\mu}$. 
Therefore, the set of conformal maps $\{\phi_{\mu\l}\}_{\l,\mu\in\L}$ is a directed inverse system with inverse limit the map $\phi$.

Let us now observe that, for $S=S_{0,5}$, the DM boundary $\dd\ol{\cM}(S)^\l$ of the level structure $\ol{\cM}(S)^\l$ coincides with
the Fulton curve $\cF^\l\subset\ol{\cM}(S)^\l$ (cf.\ \cite[Definition~6.1]{BF}). The conclusion then follows from \cite[Lemma~8.8]{BF}.
\end{proof}

\begin{lemma}\label{orientation05}$\Aut^+(\partial\ol{\M}(S_{0,5}))$ is an index $2$ subgroup of $\Aut(\partial\ol{\M}(S_{0,5}))$, that is to say,
every automorphism of $\partial\ol{\M}(S_{0,5})$ either preserves or reverses the orientation of all irreducible components simultaneously. 
In particular, there is a natural isomorphism $\kG^\pm(S_{0,5})\cong\Aut(\partial\ol{\M}(S_{0,5}))$.
\end{lemma}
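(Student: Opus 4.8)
The plan is to leverage Lemma~\ref{aut+}, which already identifies $\kG(S_{0,5})$ with the group of conformal automorphisms $\Aut^+(\partial\ol{\M}(S_{0,5}))$, and then to show that the inclusion $\Aut^+(\partial\ol{\M}(S_{0,5}))\hookra\Aut(\partial\ol{\M}(S_{0,5}))$ has index exactly $2$. The index is clearly at most $2$ on each irreducible component, since an automorphism of a Riemann surface restricted to a component is either conformal or anticonformal by hypothesis; the content of the lemma is that these local choices are \emph{synchronized} across all components, so that the quotient $\Aut(\partial\ol{\M}(S_{0,5}))/\Aut^+(\partial\ol{\M}(S_{0,5}))$ embeds into $\{\pm 1\}$ rather than into a larger product of sign groups. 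First I would fix an anticonformal involution $c$ of $\partial\ol{\M}(S_{0,5})$ coming from an element of $\G^\pm(S_{0,5})\ssm\G(S_{0,5})$ (for instance, a reflection realizing complex conjugation on the real moduli stack), which exists and acts anticonformally on every component by construction; composing any element of $\Aut(\partial\ol{\M}(S_{0,5}))$ with $c$ reduces the problem to showing that an automorphism which is conformal on at least one component is conformal on all of them.

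The crux is therefore a \emph{connectivity/rigidity} argument: an automorphism $\phi$ of $\partial\ol{\M}(S_{0,5})$ permutes the irreducible components, and its conformal-versus-anticonformal type on a component is constrained by how $\phi$ behaves along the intersection pattern where components meet. The key step is to exploit the triangulation structure: by the discussion in Section~\ref{augmented}, $\kC_P(S_{0,5})$ is the $1$-skeleton of a $\kG(S_{0,5})$-equivariant triangulation of $\partial\ol{\M}(S_{0,5})$, and each irreducible component carries the orientation coming from its complex structure, compatible with the Farey orientation of the corresponding profinite Farey subgraph $\hF_\s$. Since there is only one topological type of $0$-simplex in $C(S_{0,5})$, all these Farey subgraphs lie in a single $\kG(S_{0,5})$-orbit, and the orientation data is encoded by a \emph{single} sign via the character of the exact sequence displayed just before Lemma~\ref{aut+}. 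I would argue that $\phi$ either preserves or reverses the Farey orientation of $\hF_\s$, and that this single sign determines $\phi$'s type on every component simultaneously, because adjacent components share boundary strata whose induced orientations must agree under $\phi$.

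Concretely, the main obstacle will be transferring the synchronization already established combinatorially at the level of the pants complex (the truncated exact sequence $1\to\Inn(\kG(S_{0,5}))\to\Aut(\kC_P(S_{0,5}))\to\{\pm 1\}$, together with its right-exactness proved in this subsection) into synchronization at the level of the conformal/anticonformal dichotomy on $\partial\ol{\M}(S_{0,5})$. The bridge is that an automorphism of $\partial\ol{\M}(S_{0,5})$ respecting the triangulation induces an automorphism of $\kC_P(S_{0,5})$, and conversely the conformal type on a component is detected by whether the induced graph automorphism preserves the Farey orientation of the associated $\hF_\s$. Once this dictionary is in place, the single $\pm 1$ sign from the pants-complex exact sequence forces the conformal/anticonformal type to be globally constant, giving index exactly $2$.

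For the final sentence, the natural isomorphism $\kG^\pm(S_{0,5})\cong\Aut(\partial\ol{\M}(S_{0,5}))$ follows by combining Lemma~\ref{aut+} with the index-$2$ statement: the inner action of any $f\in\G^\pm(S_{0,5})\ssm\G(S_{0,5})$ supplies an anticonformal automorphism, so $\kG^\pm(S_{0,5})$ surjects onto $\Aut(\partial\ol{\M}(S_{0,5}))$; the restriction of this map to $\kG(S_{0,5})$ is the isomorphism onto $\Aut^+$ of Lemma~\ref{aut+}, and the index-$2$ containment $\kG(S_{0,5})\trianglelefteq\kG^\pm(S_{0,5})$ matches $\Aut^+\trianglelefteq\Aut$ index-$2$, so the surjection is in fact an isomorphism.
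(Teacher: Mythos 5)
Your overall strategy (exploit the single $\kG(S_{0,5})$-orbit of irreducible components and a synchronization argument for the per-component orientation signs, then deduce the isomorphism with $\kG^\pm(S_{0,5})$ from Lemma~\ref{aut+}) is the same as the paper's, and your last paragraph on the isomorphism is fine. But the mechanism you propose for the synchronization has a genuine gap. You want to pass from an element $\phi\in\Aut(\partial\ol{\M}(S_{0,5}))$ to an automorphism of $\kC_P(S_{0,5})$ and then invoke the single sign in the exact sequence~(\ref{originalexact}). The needed homomorphism $\Aut(\partial\ol{\M}(S_{0,5}))\to\Aut(\kC_P(S_{0,5}))$ is, however, not available at this stage: it is precisely the surjectivity half of Lemma~\ref{autprocurve}, which the paper deduces \emph{from} Lemma~\ref{orientation05}. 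Proposition~8.3 of \cite{BF} only gives the monomorphism in the opposite direction. The reason this matters is concrete: conformal automorphisms respect the Farey triangulation only because Lemma~\ref{aut+} identifies them with elements of $\kG(S_{0,5})$, and your normalization by an anticonformal involution $c$ handles the case where $\phi$ is anticonformal on \emph{every} component; but for a hypothetical ``mixed'' automorphism --- conformal on some components, anticonformal on others, which is exactly what must be excluded --- neither $\phi$ nor $c\phi$ lies in $\Aut^+(\partial\ol{\M}(S_{0,5}))$, so you have no a priori reason that it preserves the triangulation, and the reduction to~(\ref{originalexact}) never gets off the ground. The hoped-for local constraint at the points where two components meet does not rescue this either, since an automorphism of the boundary alone does not see the ambient orientation of $\ol{\M}(S_{0,5})$.

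The paper sidesteps this circularity by working directly on $\Aut(\partial\ol{\M}(S_{0,5}))$: it defines, for each $0$-simplex $\s\in\kC(S_{0,5})_0$, a character $\chi_\s$ recording whether $f$ carries the standard orientation of the component $\partial\ol{\M}(S_{0,5})_\s$ to that of $f(\partial\ol{\M}(S_{0,5})_\s)$, obtains the exact sequence
\[1\to\Aut^+(\partial\ol{\M}(S_{0,5}))\to\Aut(\partial\ol{\M}(S_{0,5}))\to\prod_{\s\in\kC(S_{0,5})_0}\{\pm 1\},\]
and then runs the orbit-constancy argument of Section~8.7 of \cite{BF} for this representation (rather than for its avatar on the pants complex), concluding via the transitivity of the $\kG(S_{0,5})$-action on $\kC(S_{0,5})_0$. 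To repair your write-up you would either need to reproduce that constancy argument at the level of $\partial\ol{\M}(S_{0,5})$ itself, or give an independent proof that every element of $\Aut(\partial\ol{\M}(S_{0,5}))$ preserves the triangulation whose $1$-skeleton is $\kC_P(S_{0,5})$.
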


\begin{proof}The irreducible components of $\partial\ol{\M}(S_{0,5})$ are parameterized by the $0$-simplices in $\kC(S_{0,5})$.
Let us denote by $\partial\ol{\M}(S_{0,5})_\s$ the irreducible component associated to the $0$-simplex $\s\in\kC(S_{0,5})_0$.
Let us then define the character $\chi_\s\co\Aut(\partial\ol{\M}(S_{0,5}))\to\{\pm 1\}$
which takes the value $+1$ on $f\in\Aut(\partial\ol{\M}(S_{0,5}))$ if $f$ sends the standard orientation of $\partial\ol{\M}(S_{0,5})_\s$ 
to the standard orientation of $f(\partial\ol{\M}(S_{0,5})_\s)$ and $-1$ otherwise. There is an exact sequence:
\[1\to\Aut^+(\partial\ol{\M}(S_{0,5}))\to\Aut(\partial\ol{\M}(S_{0,5}))\to\prod_{\s\in\kC(S_{0,5})_0}\{\pm 1\}.\]
In particular, $\Aut^+(\partial\ol{\M}(S_{0,5}))$ is a normal subgroup of $\Aut(\partial\ol{\M}(S_{0,5}))$.

We can now argue exactly as in \cite[Section~8.7]{BF} and conclude that the representation 
$\Aut(\partial\ol{\M}(S_{0,5}))\to\prod_{\s\in\kC(S_{0,5})_0}\{\pm 1\}$ is constant on the $\kG(S_{0,5})$-orbit of 
$0$-simplices of $\kC(S_{0,5})$. Since there is only one such orbit, the first statement of the lemma follows.
The second statement then follows from Lemma~\ref{aut+}.
\end{proof}

\begin{lemma}\label{autprocurve}There is a natural isomorphism $\Aut(\kC_P(S_{0,5}))\cong\Aut(\partial\ol{\M}(S_{0,5}))$.
\end{lemma}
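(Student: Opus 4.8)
The plan is to exhibit the natural continuous action of $\kG^\pm(S_{0,5})$ on the procongruence pants complex as the common source of both automorphism groups, and then to compare the two resulting extensions of $\{\pm 1\}$ by means of the short five lemma. Recall from Section~\ref{augmented} that, for $S=S_{0,5}$, the $1$-dimensional stratum $\F(S_{0,5})$ is all of $\partial\ol{\M}(S_{0,5})$, so that $\kC_P(S_{0,5})$ is precisely the $1$-skeleton of a $\kG(S_{0,5})$-equivariant triangulation of $\partial\ol{\M}(S_{0,5})$. Since the action of $\kG^\pm(S_{0,5})$ on $\partial\ol{\M}(S_{0,5})$ carries irreducible components to irreducible components and is conformal or anticonformal on each of them, it is simplicial for this triangulation and hence induces a natural homomorphism $\rho\co\kG^\pm(S_{0,5})\to\Aut(\kC_P(S_{0,5}))$. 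Composing $\rho$ with the inverse of the isomorphism $\kG^\pm(S_{0,5})\cong\Aut(\partial\ol{\M}(S_{0,5}))$ of Lemma~\ref{orientation05} produces the candidate natural map $\Aut(\partial\ol{\M}(S_{0,5}))\to\Aut(\kC_P(S_{0,5}))$, and it remains to prove that $\rho$ is an isomorphism.

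First I would record two short exact sequences sharing the same cokernel $\{\pm 1\}$. On the one hand, Lemma~\ref{aut+} and Lemma~\ref{orientation05} identify the inclusion of the index $2$ orientation-preserving subgroup $\kG(S_{0,5})\hookra\kG^\pm(S_{0,5})$ with
\[1\to\Aut^+(\partial\ol{\M}(S_{0,5}))\to\Aut(\partial\ol{\M}(S_{0,5}))\to\{\pm 1\}\to 1,\]
the right-hand arrow being the orientation character. On the other hand, Proposition~\ref{completepantsrigidity}, already established for $S=S_{0,5}$ in the preceding subsection, gives
\[1\to\Inn(\kG(S_{0,5}))\to\Aut(\kC_P(S_{0,5}))\to\{\pm 1\}\to 1.\]
The map $\rho$ sends $\kG(S_{0,5})$ into $\Inn(\kG(S_{0,5}))$ via the tautological surjection $\kG(S_{0,5})\twoheadrightarrow\Inn(\kG(S_{0,5}))$, which is an isomorphism because $Z(\kG(S_{0,5}))=\{1\}$. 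Thus $\rho$ restricts to an isomorphism on the left-hand terms, and the left square of the resulting ladder commutes.

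The one point that needs genuine verification, and which I expect to be the crux, is that the right square commutes with the identity on $\{\pm 1\}$, i.e.\ that $\rho$ intertwines the two sign characters. Concretely, this amounts to checking that an element $f\in\kG^\pm(S_{0,5})\ssm\kG(S_{0,5})$, which acts anticonformally on every irreducible component of $\partial\ol{\M}(S_{0,5})$, reverses the Farey orientation of each profinite Farey subgraph of $\kC_P(S_{0,5})$. This is exactly the compatibility that underlies the definition of the orientation on $\hF$ in Section~\ref{orientpants}, since that orientation was read off from the complex structure of the quotients $\ol{\H}/\G^\l$, i.e.\ from the very conformal structure on the components of $\partial\ol{\M}(S_{0,5})$; it is also the computation that made the sequence for $S_{0,5}$ right exact in the preceding subsection. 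Granting this, both vertical end maps of the ladder are isomorphisms, and the short five lemma forces the middle map $\rho$ to be an isomorphism as well. Composing with Lemma~\ref{orientation05} then yields the asserted natural isomorphism $\Aut(\kC_P(S_{0,5}))\cong\Aut(\partial\ol{\M}(S_{0,5}))$, the naturality being clear since every map in sight is induced by the geometric $\kG^\pm(S_{0,5})$-action.
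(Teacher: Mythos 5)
Your proposal is correct, but it reaches the isomorphism by a genuinely different route than the paper. The paper's proof is a two-line sandwich argument: it invokes Proposition~8.3 of \cite{BF} to get a natural monomorphism $\Aut(\kC_P(S_{0,5}))\hookra\Aut(\partial\ol{\M}(S_{0,5}))$, identifies the target with $\kG^\pm(S_{0,5})$ via Lemma~\ref{orientation05}, and closes the loop using the tautological embedding $\kG^\pm(S_{0,5})\subseteq\Aut(\kC_P(S_{0,5}))$. You instead work in the opposite direction: you prove directly that the action map $\rho\co\kG^{\pm}(S_{0,5})\to\Aut(\kC_P(S_{0,5}))$ is surjective (hence bijective, by center-freeness) via the five lemma applied to the two short exact sequences with common quotient $\{\pm 1\}$, and only then bring in Lemma~\ref{orientation05}. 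Your argument is sound: the bottom row is exactly the case $S=S_{0,5}$ of Proposition~\ref{completepantsrigidity} established in the preceding subsection, the left square commutes by construction, and the commutativity of the right square reduces, as you say, to the fact that $\chi\circ\rho$ is a character of $\kG^\pm(S_{0,5})$ killing $\kG(S_{0,5})$ and nontrivial on a topological orientation-reversing mapping class --- which is precisely the computation that gave right-exactness of the bottom row. What each approach buys: the paper's version outsources the hard content (that a simplicial automorphism of the procongruence pants complex extends to the boundary of $\ol{\M}(S_{0,5})$) to Proposition~8.3 of \cite{BF} and is correspondingly shorter; yours is more self-contained within this section, at the cost of implicitly reproving the identification $\Inn(\kG^{\pm}(S_{0,5}))\cong\Aut(\kC_P(S_{0,5}))$ that the paper only assembles later from Proposition~8.2 of \cite{BF} together with Proposition~\ref{completepantsrigidity}. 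The only point I would tighten is the verification of the right square: rather than ``granting'' the compatibility, note explicitly that a homomorphism $\kG^{\pm}(S_{0,5})\to\{\pm1\}$ vanishing on the index-two subgroup $\kG(S_{0,5})$ is determined by its value on a single element outside it, so the single computation already done for right-exactness suffices.
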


\begin{proof}As we remarked above, $\kC_P(S_{0,5})$ identifies with the $1$-skeleton of a triangulation of $\partial\ol{\M}(S_{0,5})$.
By \cite[Theorem~6.7 and Proposition~8.3]{BF}, there is then a natural monomorphism 
$\Aut(\kC_P(S_{0,5}))\hookra\Aut(\partial\ol{\M}(S_{0,5}))$. Thus, the conclusion follows from Lemma~\ref{orientation05} 
and the fact that $\kG^\pm(S_{0,5})$ identifies with a subgroup of $\Aut(\kC_P(S_{0,5}))$.
\end{proof}

For $[C,P_1,P_2]\in\cM_{1,[2]}$, there is a unique elliptic involution $\u$ on $C$ and, if we denote by $C_\u$ the (genus $0$) quotient of $C$ 
by this involution and by $B_\u$ the branch locus of the orbit map $C\to C_\u$, then, the assignment $[C,P_1,P_2]\mapsto[C_\u,Q,B_\u]$,
where $Q$ is the image of the pair of points $P_1,P_2$ in $C_\u$, defines a morphism of DM stacks $\cM_{1,[2]}\to\cM_{0,1[4]}$, where
we denote by $\cM_{0,1[4]}$ the moduli stack of genus $0$ projective smooth curves with $5$ labeled points, one of which is singled out
and the others are left unordered. The morphism $\cM_{1,[2]}\to\cM_{0,1[4]}$ is a $\Z/2$-gerbe which is split, since the composition
$\cM_{1,2}\to\cM_{1,[2]}\to\cM_{0,1[4]}$ is an isomorphism of DM stacks. This can be checked observing that the morphism 
$\cM_{1,2}\to\cM_{0,1[4]}$ induces an isomorphism both on the respective coarse moduli spaces and on the isotropy groups of points.

The $\Z/2$-gerbe $\cM_{1,[2]}\to\cM_{0,1[4]}$ then induces on topological fundamental groups a split short exact sequence:
\begin{equation}\label{split1}
1\to\Z/2\to\pi_1(\cM_{1,[2]})\to\pi_1(\cM_{0,1[4]})\to 1,
\end{equation}
which, in terms of mapping class groups, can be described as follows. 

Let $\u\in\G(S_{1,2})$ be the hyperelliptic involution, let $S_{/\u}$ be the quotient of the surface $S_{1,2}$ by $\u$ and let $B_\u$ 
be the branch locus of the orbit map $S\to S_{/\u}$. The surface $S_{/\u}$ is a $1$-punctured sphere, there is a diffeomorphism 
$S_{/\u}\ssm B_\u\cong S_{0,5}$ and, if we denote by $Q$ the puncture of $S_{0,5}$ which corresponds to the puncture of $S_{/\u}$ 
via the above diffeomorphism, by Birman--Hilden theory, there is a split short exact sequence (cf.\ \cite[Theorem~2.3]{BM}, for instance):
\begin{equation}\label{split2} 
1 \to \langle\u\rangle \to \G(S_{1,2}) \to \G(S_{0,5})_Q\to 1,
\end{equation}
where $\G(S_{0,5})_Q$ is the stabilizer of the puncture $Q$ in $\G(S_{0,5})$ and the splitting is given by internal direct product
decomposition $\G(S_{1,2})=\langle\u\rangle\cdot\PG(S_{1,2})$. There is then a natural isomorphism between the split short exact 
sequences~\eqref{split1} and~\eqref{split2}. 

In particular, the epimorphism $\G(S_{1,2}) \to \G(S_{0,5})_Q$ induces an isomorphism $\PG(S_{1,2})\cong\G(S_{0,5})_Q$ 
which identifies $\PG(S_{0,5})$ with a subgroup of $\PG(S_{1,2})$. We record the following for future use:

\begin{lemma}\label{squares}The group $\PG(S_{0,5})$ identifies with the normal subgroup of $\PG(S_{1,2})$ generated by 
squares of nonseparating Dehn twists.
\end{lemma}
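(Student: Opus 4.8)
The plan is to transport the whole question, via the Birman--Hilden isomorphism $\PG(S_{1,2})\cong\G(S_{0,5})_Q$ recalled above, into an assertion about the pure mapping class group of the five-punctured sphere. Write $q\co\G(S_{1,2})\to\G(S_{0,5})_Q$ for the quotient by $\langle\u\rangle$, so that the isomorphism is the restriction $q|_{\PG(S_{1,2})}$, and recall that the five marked points of $S_{0,5}$ are the point $Q$ together with the four branch points $e_1,\dots,e_4$ of the orbit map. Let $\rho\co\G(S_{0,5})_Q\to\Sigma_4$ be the permutation representation on $\{e_1,\dots,e_4\}$, whose kernel is exactly $\PG(S_{0,5})$. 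Denote by $B$ the normal subgroup of $\PG(S_{1,2})$ generated by squares of nonseparating Dehn twists. Since the set $\{T_c^2:c\text{ nonseparating}\}$ is invariant under conjugation by all of $\G(S_{1,2})$ (a homeomorphism carries nonseparating curves to nonseparating curves), $B$ is in fact normal in $\G(S_{1,2})$, so that $q(B)$ is normal in $\G(S_{0,5})_Q$; the goal thus becomes $q(B)=\PG(S_{0,5})$.

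For the inclusion $q(B)\subseteq\PG(S_{0,5})$ I would argue as follows. By the change of coordinates principle every nonseparating curve is the image under some $g\in\G(S_{1,2})$ of a $\u$-symmetric nonseparating curve $c_0$ whose projection to $S_{0,5}$ is an arc joining two branch points; by Birman--Hilden theory the twist $T_{c_0}$ maps under $q$ to the half-twist exchanging those two branch points, so $\rho(q(T_{c_0}))$ is a transposition. Since $q$ is a homomorphism on all of $\G(S_{1,2})$, for a general nonseparating $c=g(c_0)$ one has $q(T_c)=q(g)q(T_{c_0})q(g)^{-1}$, whence $\rho(q(T_c))$ is again a transposition and $\rho(q(T_c^2))=1$, i.e. $q(T_c^2)\in\ker\rho=\PG(S_{0,5})$; as $\PG(S_{0,5})$ is normal this yields $q(B)\subseteq\PG(S_{0,5})$. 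Recording now the square of the half-twist rather than just its image in $\Sigma_4$, the same computation shows that $q(T_{c_0}^2)$ is exactly the Dehn twist $T_{e_ie_j}$ about a simple closed curve encircling the corresponding pair of branch points; letting $c_0$ run over symmetric curves lying over arcs between each of the six pairs, I conclude that $q(B)$ contains $T_{e_ie_j}$ for all $1\le i<j\le 4$.

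The reverse inclusion $\PG(S_{0,5})\subseteq q(B)$ is the heart of the matter, and the main obstacle is visible already: squares of nonseparating twists account directly only for the twists $T_{e_ie_j}$ about pairs of \emph{branch} points, whereas $\PG(S_{0,5})$ is generated by the twists about \emph{all} pairs of punctures, including the four twists $T_{Qe_i}$ involving the distinguished point $Q$. I would remove this obstruction with the lantern relation. Embed a four-holed sphere in $S_{0,5}$ whose four boundary curves surround, respectively, $e_1$, $e_2$, $e_3$ and the pair $\{Q,e_4\}$; the first three boundary twists are trivial in $\G(S_{0,5})$ since those curves bound once-punctured disks, so the lantern relation degenerates to $T_{Qe_4}=T_{e_1e_2}T_{e_2e_3}T_{e_1e_3}$, exhibiting $T_{Qe_4}$ as a product of branch-pair twists and hence as an element of $q(B)$. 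Permuting the branch points (i.e. conjugating by a lift of a transposition in $\Sigma_4$, using that $q(B)$ is normal) places every $T_{Qe_i}$ in $q(B)$ as well. Since the twists $T_{e_ie_j}$ and $T_{Qe_i}$ form a classical generating set of $\PG(S_{0,5})$, this gives $\PG(S_{0,5})\subseteq q(B)$, and combining the two inclusions yields $q(B)=\PG(S_{0,5})$, which is the assertion. The steps I expect to demand the most care are the precise Birman--Hilden dictionary identifying $q(T_{c_0})$ with the half-twist between two branch points (so that its square is the full twist $T_{e_ie_j}$, with no spurious power), and the verification that the degenerate lantern configuration with the stated boundary pattern is genuinely available inside $S_{0,5}$.
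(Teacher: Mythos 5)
Your argument is correct and follows the same route as the paper: transport everything through the Birman--Hilden isomorphism $\PG(S_{1,2})\cong\G(S_{0,5})_Q$, observe that a nonseparating Dehn twist maps to a half-twist about an arc joining two branch points, so that its square maps to the Dehn twist about a curve encircling those two branch points, and then match the resulting normal subgroup with $\PG(S_{0,5})$. The only difference is that the paper simply asserts that twists about curves bounding $2$-punctured discs generate $\PG(S_{0,5})$ (in effect invoking the surjection $P_4\to\PG(S_{0,5})$), whereas you verify explicitly, via the degenerate lantern relation, that the twists $T_{Qe_i}$ involving the distinguished puncture are already products of twists about branch-point pairs; that extra step is sound and makes the argument more self-contained.
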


\begin{proof}The image of a nonseparating Dehn twist 
via the epimorphism $\G(S_{1,2}) \to \G(S_{0,5})_Q$ is a braid twist. Hence, the image of the square of a nonseparating Dehn twist
is a Dehn twist about a simple closed curve on $S_{0,5}$ which bounds a $2$-punctured disc. The subgroup of $\G(S_{1,2})$ generated
by squares of nonseparating Dehn twists is contained in $\PG(S_{1,2})$ and has trivial intersection with $\langle\u\rangle$. Therefore, it 
identifies with the subgroup of $\G(S_{0,5})_Q$ generated by Dehn twists about simple closed curves on $S_{0,5}$ bounding $2$-punctured 
discs, which is $\PG(S_{0,5})$.
\end{proof}

From the isomorphism $\cM_{1,2}\cong\cM_{0,1[4]}$, it also follows that there is a natural finite \'etale morphism $\cM_{0,5}\to\cM_{1,2}$, 
so that $\cM_{0,5}$ identifies with a level structure over $\cM_{1,[2]}$. Moreover, the DM compactification $\bM_{0,5}$ of $\cM_{0,5}$ coincides 
with the DM compactification of the latter as a level structure. Therefore, there are natural isomorphisms
$\M(S_{0,5})\cong\M(S_{1,2})$, $\ol{\M}(S_{0,5})\cong\ol{\M}(S_{1,2})$ and then $\partial\ol{\M}(S_{0,5})\cong\partial\ol{\M}(S_{1,2})$.

The Teichm\"uller spaces $\cT(S_{0,5})$ and $\cT(S_{1,2})$ are the universal covers of $\cM_{0,5}$ and $\cM_{1,2}$, respectively. Hence,
there is a natural $\G(S_{1,2})$-equivariant isomorphism $\cT(S_{0,5})\cong\cT(S_{1,2})$. 
There is also a compatible $\G(S_{1,2})$-equivariant isomorphism of curve complexes $C(S_{0,5})\cong C(S_{1,2})$. Since the 
Weil--Petersson metric on the Teichm\"uller space is determined, modulo strong equivalence, by the Fenchel--Nielsen coordinates, 
it follows that the isomorphism $\cT(S_{0,5})\cong\cT(S_{1,2})$ induces a 
natural $\G(S_{1,2})$-equivariant isomorphism of augmented Teichm\"uller spaces $\ol{\cT}(S_{0,5})\cong\ol{\cT}(S_{1,2})$ and then 
$\partial\ol{\cT}(S_{0,5})\cong\partial\ol{\cT}(S_{1,2})$. 

Even though the pants graph $C_P(S_{1,2})$ is the $1$-skeleton of a $\G(S_{1,2})$-equivariant triangulation of $\dd\ol{\cT}(S_{1,2})$,
the same holds for $C_P(S_{0,5})$ and $\partial\ol{\cT}(S_{0,5})$, and the vertex sets of the pants graphs $C_P(S_{1,2})$ and $C_P(S_{0,5})$
naturally identify, there is no natural map between these two graphs to account for it. This follows from a general result by Aramayona 
(cf.\ \cite[Theorem~A]{Aramayona}) but also from a careful analysis of the pairs of curves with minimal intersection occurring in $S_{1,2}$ and $S_{0,5}$.

In any case, from the natural $\G(S_{1,2})$-equivariant bijective map of $0$-simplices $C_P(S_{1,2})_0\cong C_P(S_{0,5})_0$, passing to 
$\kG(S_{1,2})$-completions, we get a natural $\kG(S_{1,2})$-equivariant bijective map $\kC_P(S_{1,2})_0\cong\kC_P(S_{0,5})_0$.
By \cite[Proposition~8.3]{BF}, there is a natural monomorphism $\Aut(\kC_P(S_{1,2}))\hookra\Aut(\partial\ol{\M}(S_{1,2}))$
and so $\Aut(\kC_P(S_{1,2}))\hookra\Aut(\partial\ol{\M}(S_{0,5}))$. Lemma~\ref{autprocurve} then implies that every continuous
automorphism of $\kC_P(S_{1,2})$ induces one of $\kC_P(S_{0,5})$ (compatible on vertex sets with the $\kG(S_{1,2})$-equivariant bijection given above). 

For a given $f\in\Aut(\kC_P(S_{1,2}))$, let us denote by $\tilde f$ the induced automorphism of $\kC_P(S_{0,5})$. By Lemma~\ref{orientation05}, 
$\Aut(C_P(S_{0,5}))$ identifies with a dense subgroup of the profinite group $\Aut(\kC_P(S_{0,5}))$ and $\Inn(\kG(S_{1,2}))$ with an open subgroup 
of the same group. Therefore, after possibly composing $\tilde f$ with an element of $\Inn(\kG(S_{1,2}))$, we can assume that $\tilde f\in\Aut(C_P(S_{0,5}))$. 
In particular, the given $f$ preserves the vertex set $C_P(S_{1,2})_0\subset\kC_P(S_{1,2})_0$. 

\begin{lemma}\label{vertices}If the vertices of an edge $e$ of the procongruence pants graph $\kC_P(S)$ belong to $C_P(S)_0\subset\kC_P(S)_0$,
then $e\in C_P(S)_1\subset\kC_P(S)_1$.
\end{lemma}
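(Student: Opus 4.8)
Lemma~\ref{vertices} asserts a "discreteness" or "closedness" property for the edge set of the procongruence pants graph: if both endpoints of a profinite edge $e$ happen to be classical (discrete) vertices, then the edge itself is a classical edge. In other words, $C_P(S)_1 = \kC_P(S)_1 \cap (C_P(S)_0 \times C_P(S)_0)$ as subsets of edges. The subtlety is that a priori a profinite edge connecting two discrete vertices might not be a limit of discrete edges — we need to rule out "exotic" edges.

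**Approach via the geometric realization on the DM boundary.** My plan is to use the identification, established in Section~\ref{augmented}, of $\kC_P(S)$ with the $1$-skeleton of a $\kG(S)$-equivariant triangulation of the $1$-dimensional stratum $\F(S)$ of the DM boundary $\partial\ol{\M}(S)$. Under this identification, edges of $\kC_P(S)$ correspond to edges of this triangulation, which lie inside irreducible components of $\F(S)$ isomorphic to profinite Farey graphs $\hF_\s$ (for $\s \in \kC(S)_{d(S)-2}$). The key point is that an edge of $\hF_\s$ is determined by, and determines, the $(d(S)-2)$-simplex $\s$ together with an edge of the corresponding Farey graph. So first I would reduce the statement to the level of a single Farey subgraph: an edge $e$ lies in some $\hF_\s$, and its two endpoints are adjacent vertices of that profinite Farey graph.

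**Reducing to the $d(S)=1$ case.** The crux is then the following local statement about the profinite Farey graph $\hF = \varprojlim_\l \mathrm{F}^\l$: if two vertices $v, w \in \mathrm{F}_0 \subset \hF_0$ are joined by an edge of $\hF$, then they are joined by an edge of the discrete Farey graph $\mathrm{F}$. To prove this, I would exploit that $\hF$ is the inverse limit of the finite quotient graphs $\mathrm{F}^\l = \mathrm{F}/\G^\l$, each of which is a genuine finite triangulated surface $\ol{\H}/\G^\l$. An edge of the inverse limit projects to an edge in each $\mathrm{F}^\l$. Now I would use the congruence-level combinatorics: the discrete vertices $v,w$ project to $v_\l, w_\l$ in each level, and adjacency in $\hF$ means $v_\l, w_\l$ are adjacent in $\mathrm{F}^\l$ for all $\l$. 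The hard part will be to promote this "adjacency at every finite level" into "adjacency in $\mathrm{F}$ itself" — that is, to show that if two fixed Farey vertices have adjacent images in every finite congruence quotient, they must already be Farey-adjacent. This should follow because two non-adjacent Farey vertices have geometric intersection number $\geq 2$ (in the curve-complex translation, or lie at combinatorial distance $\geq 2$ in $\mathrm{F}$), and this positive distance is detected at some finite level: the residual finiteness / separability properties of $\G(S)$ with respect to congruence levels guarantee a level $\G^\l$ at which $v_\l$ and $w_\l$ become non-adjacent. I expect this separability step — finding a single congruence level witnessing non-adjacency — to be the main obstacle, and I would look to the combinatorial results on $\kC(S)$ from \cite{Boggi} and \cite{CongTop} (in particular the faithfulness of the action on simplices and the fact that $C_P(S)$ embeds in $\kC_P(S)$) to supply it.

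**Assembling the argument.** Concretely, the steps in order are: (i) realize $e$ as an edge inside a profinite Farey subgraph $\hF_\s$, with $\s \in \kC(S)_{d(S)-2}$ determined by $e$; (ii) observe that since both endpoints of $e$ are discrete, the simplex $\s$ is discrete, so $\hF_\s$ is the completion of a genuine Farey subgraph $\mathrm{F}_\s \subset C_P(S)$; (iii) reduce to the $d=1$ statement for $\hF = \varprojlim_\l \mathrm{F}^\l$; (iv) invoke separability to produce a level at which non-adjacent discrete vertices are distinguished, forcing the adjacent endpoints of $e$ to be adjacent already in $\mathrm{F}$; (v) conclude $e \in C_P(S)_1$. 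An alternative, possibly cleaner route avoiding explicit separability would be to use that the closure of $C_P(S)_1$ inside $\kC_P(S)_1$ is a closed (hence profinite) subset whose fiber over each ordered pair of discrete vertices is either empty or a single point, and then to argue that the discrete edge set is already closed under the relevant limits by the triangulation structure; but I expect the separability argument to be the most direct and robust.
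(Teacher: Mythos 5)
Your overall strategy coincides with the paper's: both reduce the statement to the profinite Farey graph $\hF$ (the paper does this via the first item of Lemma~6.6 in \cite{BF}, which places the edge $e$ inside the profinite Farey subgraph $\hF_{\s_0\cap\s_1}$ attached to the discrete $(d(S)-2)$-simplex $\s_0\cap\s_1$), and both then aim to show that two discrete Farey vertices which are adjacent in $\hF$ are already adjacent in $\rm F$. Steps (i)--(iii) of your outline are therefore sound and match the paper.

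The gap is in step (iv), which is the entire content of the lemma and which you leave as an expectation rather than a proof. You hope that the required separability --- a finite congruence level witnessing non-adjacency of two non-adjacent discrete vertices --- can be extracted from the combinatorial results on $\kC(S)$ in \cite{Boggi} and \cite{CongTop}; but those results concern the procongruence curve complex, and the ``full subgraph'' property you need for $\kC_P(S)$ is precisely what Lemma~\ref{vertices} asserts, so pointing back to them is close to circular. The paper's actual mechanism is a concrete piece of hyperbolic geometry: for $v_0,v_1\in\rm F_0$, i.e.\ two cusps of $\ol\H$, the unique geodesic $\g$ joining them embeds as a simple geodesic arc in $\ol\H/\G^{\l'}$ for every sufficiently deep congruence level $\G^{\l'}$; since an edge of $\hF$ with endpoints $v_0,v_1$ forces their images to be adjacent in every quotient triangulation $\Delta^{\l'}$, this embedded arc pins the combinatorial distance between $v_0$ and $v_1$ in the Farey tessellation down to $1$, i.e.\ forces $\{v_0,v_1\}\in\rm F_1$. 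Without this (or an equivalent) finite-level witness your argument is not yet a proof. Your suggested ``cleaner alternative'' via the closure of $C_P(S)_1$ also begs the question: $C_P(S)_1$ is dense in $\kC_P(S)_1$, so every edge is a limit of discrete edges, and the issue is exactly whether an exotic edge sitting over a discrete, non-adjacent pair of vertices can occur.
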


\begin{proof}Let $\{\s_0,\s_1\}\subset C(S)_{d(S)-1}\subset\kC(S)_{d(S)-1}$ be the vertex set of $e$.
By the first item of \cite[Lemma~6.6]{BF}, the edge $e$ is then contained in the profinite Farey subgraph $\hF_{\s_0\cap\s_1}$, which is
obtained as the $\wh{\PSL_2(\Z)}$-completion of the Farey subgraph ${\rm F}_{\s_0\cap\s_1}\subset C_P(S)$.
Hence it is enough to prove the statement of the lemma for the profinite Farey graph $\hF$.

Given two vertices $v_0,v_1\in\rm F_0$, by basic plane hyperbolic geometry, we know that there is a unique geodesic $\g$ in $\ol{\H}$ 
connecting these two points and a finite index subgroup $\G^\l$ of $\PSL_2(\Z)$ such that, for all $\G^{\l'}\leq\G^\l$, the image of $\g$ 
in the quotient surface $\ol{\H}/\G^{\l'}$ is a simple geodesic arc.
This implies that, if $\{v_0,v_1\}$ is the vertex set of an edge of $\hF$, the distance between $v_0$ and $v_1$ in $\ol{\H}$ is $1$, which is possible
only if $\{v_0,v_1\}$ is the vertex set of an edge of $\rm F$.
\end{proof}

From Lemma~\ref{vertices}, it follows that $f$ induces an automorphism of the pants complex $C_P(S_{1,2})$.
By \cite[Theorem~1 and Theorem~2]{Margalit}, we then have that $f\in\Inn(\G^\pm(S_{1,2}))$ which completes 
the proof of the case $S=S_{1,2}$ of Theorem~\ref{completepantsrigidity}.

\subsection{Three lemmas}\label{2lemmas}The following definition will play a fundamental role in the proof, by induction, of the
general case of Theorem~\ref{completepantsrigidity}:

\begin{definition}\label{link}For $d(S)>1$, every $(d(S)-2)$-multicurve on $S$ contains at least a simple closed curve which is either 
nonseparating or bounds a $2$-punctured disc. For a fixed such simple closed curve $\g$, we then let 
$L_\g$\index{$L_\g$, the union of all profinite Farey subgraphs  $\hF_\s$ such that $\g\in\s$} be the closed subgraph of $\kC_P(S)$ 
which is the union of all profinite Farey subgraphs $\hF_\s$ such that $\g\in\s$. 
\end{definition}

Let us denote by $S_\g$ either $S\ssm\g$, for $\g$ nonseparating, or the connected component of $S\ssm\g$
of positive modular dimension, for $\g$ bounding a $2$-punctured disc. We then have:

\begin{lemma}\label{linkpants}The profinite subgraph $L_\g$ of $\kC_P(S)$ is naturally isomorphic to the procongruence pants complex $\kC_P(S_\g)$.
\end{lemma}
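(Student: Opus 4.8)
The plan is to realise both $L_\gamma$ and $\kC_P(S_\gamma)$ as the $1$-skeleta of triangulated $1$-dimensional strata of a procongruence DM boundary, and then to identify those strata. Concretely, I would show that $L_\gamma$ is exactly the union of the $1$-dimensional strata of $\partial\ol{\M}(S)$ lying in the boundary divisor determined by $\gamma$, and that this divisor carries a triangulated $1$-stratum whose $1$-skeleton is $\kC_P(S_\gamma)$. The argument then runs in parallel with the construction of $\kC_P(S)$ recalled in Section~\ref{augmented}, with $\ol{\M}(S)$ replaced by the boundary divisor of $\gamma$.

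First I would treat the topological picture. Cutting along $\gamma$ gives a bijection between pants decompositions of $S$ containing $\gamma$ and pants decompositions of $S_\gamma$: for $\gamma$ nonseparating this is literal cutting, while if $\gamma$ bounds a $2$-punctured disc one forgets the $S_{0,3}$ summand, which supports no curves. Under this bijection a $(d(S)-2)$-multicurve $\s$ with $\gamma\in\s$ corresponds to the $(d(S_\gamma)-1)$-multicurve $\s\ssm\gamma$, and an elementary move fixing $\gamma$ corresponds to an elementary move on $S_\gamma$; hence the union $\bigcup_{\gamma\in\s}{\rm F}_\s$ of Farey subgraphs of $C_P(S)$ is identified with $C_P(S_\gamma)$. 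The geometric counterpart is the isomorphism of closed strata $\partial\ol{\cT}(S)_\gamma\cong\ol{\cT}(S\ssm\gamma)\cong\ol{\cT}(S_\gamma)$ of Section~\ref{augmented} (the $S_{0,3}$-factor being a point), which carries the union of those components $\partial\ol{\cT}(S)_\s$ of $\wt{\cF}(S)$ with $\gamma\in\s$ onto the full $1$-dimensional stratum $\wt{\cF}(S_\gamma)$; taking $1$-skeleta of the induced triangulations recovers the combinatorial identification.

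Next I would promote this to finite levels and pass to the limit. The isomorphism $\partial\ol{\cT}(S)_\gamma\cong\ol{\cT}(S_\gamma)$ is equivariant for the restriction homomorphism from the stabilizer of $\gamma$ to the mapping class group of $S_\gamma$. For each congruence level $\G^\l$ of $\G(S)$, the boundary divisor of $\bM(S)^\l$ lying over the image of $\gamma$ restricts to a level structure over $\bM(S_\gamma)$, so its triangulated $1$-stratum is a finite quotient $C_P(S_\gamma)/\G_\gamma^\l$ for the level $\G_\gamma^\l$ of $\G(S_\gamma)$ induced by cutting. Passing to the inverse limit over $\l$, and using that $L_\gamma=\bigcup_{\gamma\in\s}\hF_\s$ is precisely the union of the $1$-dimensional strata of $\partial\ol{\M}(S)$ contained in the procongruence divisor of $\gamma$ (Definition~6.3 and Proposition~8.3 of \cite{BF}), I obtain a continuous graph isomorphism $L_\gamma\cong\varprojlim_\l C_P(S_\gamma)/\G_\gamma^\l$.

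The one genuinely delicate step, which I expect to be the main obstacle, is to identify this last inverse limit with $\kC_P(S_\gamma)$ itself. This reduces to showing that the levels $\{\G_\gamma^\l\}_\l$ induced on $\G(S_\gamma)$ by cutting the congruence levels of $\G(S)$ are cofinal in the congruence topology of $\G(S_\gamma)$, i.e.\ that the procongruence structure restricted from $S$ along $\gamma$ agrees with the intrinsic one of $S_\gamma$. This compatibility of congruence level structures with cutting along a curve is exactly what the results of \cite{Boggi} and \cite{CongTop} provide; the only real care is needed for the two new punctures created when $\gamma$ is nonseparating and for checking that the induced levels are the geometric ones. Granting this cofinality, $\varprojlim_\l C_P(S_\gamma)/\G_\gamma^\l\cong\kC_P(S_\gamma)$ and the lemma follows.
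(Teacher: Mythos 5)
Your proof is correct in outline, but it takes a genuinely different route from the paper's. The paper argues purely combinatorially: it invokes the isomorphism $\Link(\g)\cong\kC(S_\g)$ of procongruence curve complexes (Remark~4.7 in \cite{Boggi}), identifies the vertex set of $\kC_P(S_\g)$ with that of $L_\g$ by sending a facet $\s$ of $\kC(S_\g)$ to the facet $\xi(\s)\cup\{\g\}$ of $\Star(\g)\subset\kC(S)$, and then checks directly that edges correspond to edges. You instead realise both graphs geometrically as $1$-skeleta of triangulated $1$-dimensional strata of procongruence DM boundaries, work at finite congruence levels, and pass to the inverse limit. The two arguments rest on the same deep input, just packaged differently: the cofinality of the induced level structures $\{\G_\g^\l\}$ in the congruence topology of $\G(S_\g)$, which you correctly isolate as the delicate step, is precisely what the link isomorphism $\Link(\g)\cong\kC(S_\g)$ of \cite{Boggi} (equivalently, the exact sequence $1\to\hI_\g\to\kPG(S)_{\vec\g}\to\kPG(S_\g)\to 1$ of Theorem~4.10 in \cite{BF}) encodes, so citing those results there is legitimate and not circular. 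What the paper's route buys is brevity, since the hard work is already done at the level of curve complexes and only the vertex/edge bookkeeping for pants complexes remains; what your route buys is a transparent geometric picture that runs in exact parallel with the construction of $\kC_P(S)$ in Section~\ref{augmented} and makes visible why the Farey subgraphs through $\g$ assemble into a full pants complex. Your handling of the two cases of $\g$ (literal cutting versus discarding the $S_{0,3}$ component, which contributes only a point to the boundary stratum) is also the right way to reconcile $S\ssm\g$ with $S_\g$.
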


\begin{proof}By \cite[Remark~4.7]{Boggi}, the link 
$\Link(\g)\subset\kC(S)$ \index{$\Link(\g)$, the link of $\g$ in $\kC(S)$} is naturally isomorphic to $\kC(S_\g)$. 
Let $\xi\co\kC(S_\g)\stackrel{\sim}{\to}\Link(\g)$ be such an isomorphism. 
We can then identify the vertex set of $\kC_P(S_\g)$ with a subset of the vertex set of $\kC_P(S)$ by sending
a $(d(S_\g)-1)$-simplex $\s$ of $\kC(S_\g)$ to the $(d(S)-1)$-simplex $\xi(\s)\cup\{\g\}$ of $\Star(\g)\subset\kC(S)$.\index{$\Star(\g)$, the star of $\g$ in $\kC(S)$}  
The image of this map is precisely the vertex set of the subgraph $L_\g$ of $\kC_P(S)$. In order to prove that it also induces a map between the edges of 
$\kC_P(S_\g)$ and those of $L_\g$, let us observe the following. Let $\mathring{L}_\g\subset C_P(S)$ be 
the union of all Farey subgraphs $\operatorname{F}_\s$ such that $\g\in\s$. Then, $\mathring{L}_\g$ is a dense subgraph of $L_\g$ 
and it is clear that the map defined above maps the edge set of the pants complex $C_P(S_\g)$ onto the edge set of $\mathring{L}_\g$.
Since the map defined above is $\kG(S_\g)$-equivariant, this implies that it maps the edge set of $\kC_P(S_\g)$ onto the edge set of $L_\g$.
\end{proof}

\begin{remark}\label{groupthreal}Let $\cG(\hG(S))$
\index{$\cG(\kG(S))$, the set of closed subgroups of $\kG(S)$} 
be the profinite set of closed subgroups of $\kG(S)$. By \cite[Theorem~6.9]{Boggi}, for every $0\leq i\leq d(S)-1$,
the assignment $\s\mapsto \hI_\s$, for $\s\in\kC(S)_i$, defines a $\kG(S)$-equivariant continuous embedding:
\[\cI_i\co\kC(S)_i\hookra\cG(\kG(S)).\]
The procongruence curve complex $\kC(S)$ then identifies with the abstract simplicial 
profinite complex $\kC_\cI(S)$
\index{$\kC_\cI(S)$ abstract simplicial profinite complex whose set of $i$-simplices is the set of closed subgroups $\{\hI_\s\}_{\s\in\kC(S)_i}$}
whose set of $h$-simplices is the set of of closed subgroups $\{\hI_\s\}_{\s\in\kC_h(S)}$.
\end{remark}

By Remark~\ref{groupthreal}, there is a natural continuous action of $\Aut^\I(\kPG(S))$ on $\kC(S)$. We have: 

\begin{lemma}\label{faithful}For $d(S)\geq 1$, there is a natural continuous monomorphism: 
\[\check{\Theta}_\I\co\Aut^\I(\kPG(S))\hookra\Aut(\kC(S)).\]
\end{lemma}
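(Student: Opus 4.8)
The plan is to define $\check{\Theta}_\I$ as the tautological action on inertia subgroups and then establish faithfulness by a centralizer computation. For the construction, recall that an $f\in\Aut^\I(\kPG(S))$ preserves, by definition, the set of conjugacy classes of procyclic subgroups topologically generated by profinite Dehn twists. Since this set of subgroups is exactly $\{\hI_v\}_{v\in\kC(S)_0}$ and is stable under conjugation in $\kPG(S)$, the automorphism $f$ permutes it, and because $f^{-1}$ also lies in $\Aut^\I(\kPG(S))$ this permutation is a bijection of $\kC(S)_0$. By Remark~4.13 in \cite{BF} and Theorem~6.9 in \cite{Boggi}, a subset $\{v_0,\dots,v_k\}\subseteq\kC(S)_0$ spans a $k$-simplex $\s$ precisely when $\hI_{v_0},\dots,\hI_{v_k}$ generate a closed abelian subgroup $\hI_\s$ of rank $k+1$; as $f$ preserves commutation and ranks, it carries $\hI_\s$ to $\hI_{\s'}$ for a unique $\s'\in\kC(S)_k$. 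This yields, compatibly with the face maps, a simplicial automorphism $\check{\Theta}_\I(f)$ of $\kC(S)$, and $\check{\Theta}_\I$ is visibly a homomorphism. Continuity is inherited from the continuity of $f$ and of conjugation on the profinite sets $\{\hI_\s\}_{\s\in\kC(S)_k}$.

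To prove faithfulness, suppose $\check{\Theta}_\I(f)=\mathrm{id}$, so that $f(\hI_d)=\hI_d$ for every vertex $d$ and hence $f(\tau_d)=\tau_d^{\chi(d)}$ for a topological generator $\tau_d$ of $\hI_d$ and some unit $\chi(d)\in\hZ^\times$. Fix $\phi\in\kPG(S)$ and set $g:=\phi^{-1}f(\phi)$. Applying $f$ to $\tau_{\phi(c)}=\phi\tau_c\phi^{-1}$ and using $f(\phi)=\phi g$ gives $\phi\, g\,\tau_c^{\chi(c)}\,g^{-1}\phi^{-1}=f(\tau_{\phi(c)})=\tau_{\phi(c)}^{\chi(\phi(c))}=\phi\,\tau_c^{\chi(\phi(c))}\phi^{-1}$, whence $g\,\tau_c^{\chi(c)}\,g^{-1}=\tau_c^{\chi(\phi(c))}$. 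The left side is a topological generator of $g\hI_c g^{-1}=\hI_{g\cdot c}$ while the right side generates $\hI_c$; since distinct curves carry distinct inertia groups, $g\cdot c=c$ for every curve $c$. Thus, for every $\phi\in\kPG(S)$, the element $g=\phi^{-1}f(\phi)$ acts trivially on $\kC(S)$.

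It then remains to identify this kernel. The kernel of the action of $\kPG(S)$ on $\kC(S)$ is its center $Z(\kPG(S))$, which is finite; so $g\in Z(\kPG(S))$ and $\delta\co\phi\mapsto\phi^{-1}f(\phi)$ is a continuous homomorphism $\kPG(S)\to Z(\kPG(S))$. Evaluating at $\phi=\tau_d$ yields $\delta(\tau_d)=\tau_d^{\chi(d)-1}\in\hI_d\cap Z(\kPG(S))$; since $\hI_d\cong\hZ$ is torsion-free and the center is torsion, this intersection is trivial, forcing $\chi(d)=1$ and $f(\tau_d)=\tau_d$. As the profinite Dehn twists topologically generate $\kPG(S)$ and $f$ is continuous, $f=\mathrm{id}$, which proves injectivity. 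The hard part is exactly this last step: one must control the exponents $\chi(d)$ together with the possibly nontrivial (hyperelliptic) center at once, and it is the torsion-freeness of the inertia groups set against the finiteness of $Z(\kPG(S))$ that rules out both a central twist and any nontrivial scaling of the Dehn twists.
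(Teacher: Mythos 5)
Your proof is correct and follows the same strategy as the paper's: show that any $f$ in the kernel of $\check{\Theta}_\I$ is a central automorphism, i.e.\ that $\phi\mapsto\phi^{-1}f(\phi)$ takes values in $Z(\kPG(S))$, and then use the $\I$-condition, via $\hI_d\cap Z(\kPG(S))=\{1\}$, to conclude that $f$ fixes every profinite Dehn twist and hence is the identity. The only difference is that the paper imports the first step from Theorem~7.3 together with Lemmas~7.4 and~3.5 of \cite{BF} (the explicit description of $\Hom(\kPG(S)/Z(\kPG(S)),Z(\kPG(S)))$), whereas you reprove it directly by the cocycle computation; your version is self-contained, relying only on the standard inputs (the parameterization of simplices by inertia groups, the finiteness of the center, and the fact that the kernel of the action of $\kPG(S)$ on $\kC(S)$ is central), but is otherwise identical in substance.
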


\begin{proof}By \cite[Theorem~7.3]{BF}, the kernel of the homomorphism $\check{\Theta}_\I$ is contained in the image of the monomorphism
$\exp\co\Hom(\kPG(S)/Z(\kPG(S)),Z(\kPG(S)))\to\Aut(\kPG(S))$, where we denote by $Z(\kPG(S))$ the center of $\kPG(S)$. 
This implies the lemma for $S\neq S_{1,1}, S_2$, since in this case $Z(\kPG(S))=\{1\}$. Otherwise, the center
$Z(\kPG(S))$ is generated by the hyperelliptic involution $\u$ and we have 
$\Hom(\kPG(S)/\langle\u\rangle,\langle\u\rangle)\cong\Hom(\PG(S)/\langle\u\rangle,\langle\u\rangle)\cong\Z/2$.

The automorphism $\exp\phi\in\Aut(\kPG(S))$ in the image of $0\neq\phi\in\Hom(\kPG(S)/\langle\u\rangle,\langle\u\rangle)$ (cf.\ \cite[Lemma~7.4]{BF})
maps a nonseparating Dehn twist $\tau_\g\in\PG(S)$ to $\u\cdot\tau_\g$. Therefore, $\exp\phi\notin\Aut^\I(\kPG(S))$, which implies the lemma.
\end{proof}

There is a natural action on the link $\Link(\g)\cong\kC(S_\g)$ of the stabilizer $\Aut^\I(\kPG(S))_{\hI_\g}$ 
of the inertia group $\hI_\g$ associated to $\g$ in $\Aut^\I(\kPG(S))$. We have:

\begin{lemma}\label{factor}After identifying $\Link(\g)$ with $\kC(S_\g)$, the action of $\Aut^\I(\kPG(S))_{\hI_\g}$ on $\Link(\g)$ 
factors through the natural action of $\Aut^\I(\kPG(S_\g))$ on $\kC(S_\g)$. The same statement holds after replacing $\kPG(S)$ with 
$\kPG^{\pm}(S)$ and $\kPG(S_\g)$ with $\kPG^{\pm}(S_\g)$.
\end{lemma}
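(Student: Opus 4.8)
The plan is to exhibit a natural continuous homomorphism
\[r_\g\co\Aut^\I(\kPG(S))_{\hI_\g}\to\Aut^\I(\kPG(S_\g))\]
and to check that, under the identification $\xi\co\kC(S_\g)\stackrel{\sim}{\to}\Link(\g)$ of Remark~4.7 in \cite{Boggi}, the restriction to $\Link(\g)$ of the action of $\Aut^\I(\kPG(S))_{\hI_\g}$ on $\kC(S)$ agrees with the composite of $r_\g$ with the natural action $\Aut^\I(\kPG(S_\g))\hookra\Aut(\kC(S_\g))$ of Lemma~\ref{faithful}. Concretely, the claim is that the square
\[
\begin{CD}
\Aut^\I(\kPG(S))_{\hI_\g} @>>> \Aut(\Link(\g))\\
@V{r_\g}VV @|\\
\Aut^\I(\kPG(S_\g)) @>>> \Aut(\kC(S_\g))
\end{CD}
\]
commutes, the right-hand vertical map being the identity via $\xi$. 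Note first that, since $\kC(S)$ is the complex whose vertices are the inertia subgroups $\hI_\s\subset\kPG(S)$ (Remark~4.13 in \cite{BF}) and these are permuted among themselves by any $f\in\Aut^\I(\kPG(S))$, such an $f$ fixing the vertex $\hI_\g$ sends the inertia subgroup $\hI_{\xi(\delta)}$ attached to a curve $\delta$ on $S_\g$ to another inertia subgroup $\hI_{\xi(\delta')}$, with $\xi(\delta')$ again disjoint from $\g$ because $f$ carries the edge $\{\hI_\g,\hI_{\xi(\delta)}\}$ to the edge $\{\hI_\g,\hI_{\xi(\delta')}\}$; here $\delta'$ is the image of $\delta$ under the simplicial automorphism $f|_{\Link(\g)}$.

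To build $r_\g$ I would use the procongruence \emph{cutting} description underlying $\xi$. Any $f\in\Aut^\I(\kPG(S))_{\hI_\g}$ satisfies $f(\hI_\g)=\hI_\g$, hence preserves the normalizer $N_\g:=N_{\kPG(S)}(\hI_\g)$ together with the normal procyclic subgroup $\hI_\g\trianglelefteq N_\g$, and therefore descends to an automorphism $\bar f$ of $N_\g/\hI_\g$. The structural input is that there is a natural continuous isomorphism $N_\g/\hI_\g\cong\kPG(S_\g)$ under which the quotient map $\rho_\g\co N_\g\to\kPG(S_\g)$ carries each $\hI_{\xi(\delta)}$ to the inertia subgroup $\hI_\delta$ of $\kPG(S_\g)$; this is precisely the group-theoretic content realizing the link isomorphism of Remark~4.7 in \cite{Boggi} (for $\g$ nonseparating, and correspondingly for $\g$ bounding a $2$-punctured disc). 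I then set $r_\g(f):=\bar f$, which is visibly a continuous homomorphism.

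It remains to verify that $r_\g(f)\in\Aut^\I(\kPG(S_\g))$ and that the square commutes, and both follow from a single computation. By construction $\rho_\g\circ f|_{N_\g}=\bar f\circ\rho_\g$, so
\[\bar f(\hI_\delta)=\rho_\g\bigl(f(\hI_{\xi(\delta)})\bigr)=\rho_\g(\hI_{\xi(\delta')})=\hI_{\delta'}.\]
Thus $\bar f$ permutes the inertia subgroups $\{\hI_\delta\}$ of $\kPG(S_\g)$, which shows both that $\bar f$ preserves the conjugacy classes of procyclic groups generated by profinite Dehn twists, i.e.\ $\bar f\in\Aut^\I(\kPG(S_\g))$, and that the induced action of $\bar f$ on $\kC(S_\g)$ sends the vertex $\hI_\delta$ to $\hI_{\delta'}$, exactly as $f|_{\Link(\g)}$ does under $\xi$. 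Injectivity of $\Aut^\I(\kPG(S_\g))\hookra\Aut(\kC(S_\g))$ from Lemma~\ref{faithful} then pins $\bar f$ down uniquely, giving the asserted factoring. For the extended statement one runs the identical argument inside $\kPG^\pm(S)$: an orientation-reversing class fixing $\hI_\g$ lies in $N_{\kPG^\pm(S)}(\hI_\g)$ and inverts $\tau_\g$, and the corresponding cutting isomorphism $N_{\kPG^\pm(S)}(\hI_\g)/\hI_\g\cong\kPG^\pm(S_\g)$ (with the $\Z/2$ of orientation-reversal matching $\kPG^\pm(S_\g)/\kPG(S_\g)$) makes the same diagram commute.

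The main obstacle is establishing and applying the cutting isomorphism $N_\g/\hI_\g\cong\kPG(S_\g)$ in a form compatible both with the inertia groups (the identity $\rho_\g(\hI_{\xi(\delta)})=\hI_\delta$) and with the identification $\xi$ of Remark~4.7. The delicate points are the bookkeeping of the center and of the new punctures created by cutting (in particular whether side-swapping classes must be quotiented out, so that one uses the normalizer rather than the centralizer), and verifying that this holds uniformly in the two cases of Definition~\ref{link} and after passing to $\kPG^\pm$. Once this structural result is in hand, the homomorphism $r_\g$ and the commutativity of the square are routine.
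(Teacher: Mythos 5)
Your overall strategy is the same as the paper's: restrict $f$ to the subgroup of $\kPG(S)$ attached to $\g$, quotient by $\hI_\g$, identify the quotient with $\kPG(S_\g)$, and check compatibility with the inertia subgroups. The final computation $\bar f(\hI_\delta)=\rho_\g(f(\hI_{\xi(\delta)}))=\hI_{\delta'}$ and the use of Lemma~\ref{faithful} are fine. The problem is the single ``structural input'' on which everything rests, namely the isomorphism $N_\g/\hI_\g\cong\kPG(S_\g)$: you flag it as the main obstacle but never establish it, and as stated it is false. By Corollary~4.12 and Theorem~4.10 in \cite{BF}, the centralizer $Z_{\kPG(S)}(\hI_\g)$ is the stabilizer $\kPG(S)_\g$, and this sits in an exact sequence $1\to\kPG(S)_{\vec\g}\to\kPG(S)_\g\to\{\pm 1\}$ with $\kPG(S)_{\vec\g}/\hI_\g\cong\kPG(S_\g)$. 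So $\kPG(S)_\g/\hI_\g$ (a fortiori $N_\g/\hI_\g$) is in general an extension of a subgroup of $\{\pm 1\}$ by $\kPG(S_\g)$, the $\{\pm 1\}$ recording the pure mapping classes that reverse the orientation of $\g$ (equivalently, swap the sides of $\g$, hence the two new punctures of the cut surface). Your map $r_\g$ therefore does not land in $\Aut(\kPG(S_\g))$ as constructed. The same defect recurs, compounded, in your treatment of the $\pm$ case, where the quotient acquires both the orientation-reversal $\Z/2$ and this side-swapping $\{\pm 1\}$.

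The repair is exactly where the $\I$-condition has to be used and is the step your argument is missing: one first observes that $\kPG(S)_{\vec\g}$ is the normal subgroup of $\kPG(S)_\g$ topologically generated by profinite Dehn twists, so any element of $\Aut^\I(\kPG(S)_\g)$ preserves it; one then quotients $\kPG(S)_{\vec\g}$ by $\hI_\g$, which by Theorem~4.14 in \cite{BF} is its \emph{center} and hence is automatically preserved, obtaining the natural homomorphism to $\Aut^\I(\kPG(S_\g))$. With that intermediate subgroup inserted, your commutative square and the verification that $\bar f\in\Aut^\I(\kPG(S_\g))$ go through as you wrote them. (A smaller point: you should work with the centralizer rather than the normalizer of $\hI_\g$ in $\kPG(S)$, since it is the centralizer that Corollary~4.12 of \cite{BF} identifies with the stabilizer; for orientation-preserving pure mapping classes the two coincide, but in $\kPG^\pm(S)$ they do not, and the bookkeeping you defer is precisely the content of the cited results.)
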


\begin{proof}An automorphism of $\kPG(S)$ which preserves the procyclic inertia group $\hI_\g$ also preserves its centralizer $Z_{\kPG(S)}(\hI_\g)$
in $\kPG(S)$ and, since, for $\s\in\Link(\g)$, the inertia group $\hI_\s$ is contained in $Z_{\kPG(S)}(\hI_\g)$, the action of $\Aut^\I(\kPG(S))_{\hI_\g}$ 
on $\Link(\g)$ factors through the homomorphism induced by restriction:
 \[\Aut^\I(\kPG(S))_{\hI_\g}\to\Aut^\I(Z_{\kPG(S)}(\hI_\g)).\]

By \cite[Corollary~6.1]{Boggi}, there is a natural isomorphism $Z_{\kPG(S)}(\hI_\g)\cong\kPG(S)_{\g}$, so that we can identify 
$\Aut^\I(Z_{\kPG(S)}(\hI_\g))$ with the closed subgroup $\Aut^\I(\kPG(S)_{\g})$ of $\Aut(\kPG(S)_{\g})$ consisting of those automorphisms 
which preserve the set of conjugacy classes of the procyclic subgroups of $\kPG(S)_{\g}$ generated by profinite Dehn twists. 

Since $\g$ is either nonseparating or bounding a 2-punctured disc and  the pure mapping class group of 
$3$-punctured sphere is trivial, by \cite[Theorem~4.10]{BF}, we have the exact sequences:
\[1\ra\kPG(S)_{\vec{\g}}\to\kPG(S)_{\g}\to\{\pm 1\}\hspace{0.5cm}\mbox{ and }\hspace{0.5cm}
1\ra\hat{\mathrm{I}}_\g\to\kPG(S)_{\vec{\g}}\to\kPG(S_\g)\to 1,\]
where the homomorphism $\kPG(S)_{\g}\to\{\pm 1\}$ is induced by the action of the stabilizer $\PG(S)_\g$ on the orientation of the
simple closed curve $\g$. In particular, for $\g$ bounding a $2$-punctured disc, we have $\kPG(S)_{\vec{\g}}=\kPG(S)_{\g}$.

Since $\kPG(S)_{\vec{\g}}$ is the normal subgroup of $\kPG(S)_{\g}$ topologically generated by Dehn twists, an element of $\Aut^\I(\kPG(S)_{\g})$
preserves the subgroup $\kPG(S)_{\vec{\g}}$, so that there is a natural homomorphism:
\[\Aut^\I(\kPG(S)_{\g})\to\Aut^\I(\kPG(S)_{\vec\g}).\] 

By \cite[Theorem~4.14]{BF}, the procyclic subgroup $\hI_\g$ is the center of $\kPG(S)_{\vec\g}$, hence, it is preserved by every element of 
$\Aut^\I(\kPG(S)_{\vec\g})$ and there is a natural homomorphism:
\[\Aut^\I(\kPG(S)_{\vec\g})\to\Aut^\I(\kPG(S_\g)).\] 

By composing all the above homomorphisms, we get a natural homomorphism:
\[\Aut^\I(\kPG(S))_{\hI_\g}\to\Aut^\I(\kPG(S_\g)).\]  

The natural isomorphism $\Link(\g)\cong\kC(S_\g)$ identifies all the inertia subgroups of $\kPG(S)$ contained in $Z_{\kPG(S)}(\hI_\g)\cong\kPG(S)_{\g}$, 
but which do not contain the Dehn twist $\tau_\g$, with the inertia subgroups of $\kPG(S_\g)$ in a way which is clearly compatible with the above series of
homomorphisms. The first statement of the lemma follows. The second can be proved in a similar way.
\end{proof}

\subsection{Connectedness of various curve complexes}
For the proof of Theorem~\ref{completepantsrigidity}, we need the connectivity of some curve complexes.
Even though these results are probably well known to experts, we include a proof for lack of suitable references.

\begin{definition}\label{newcurvecompl}\leavevmode
\begin{enumerate}
\item For $n\geq 4$, let $C_b(S_{0,n})$ be the curve complex defined as the full subcomplex of the complex of curves $C(S_{0,n})$
whose vertices consist of isotopy classes of simple closed curves on $S_{0,n}$ which bound a $2$-punctured disc.
\item For $n\geq 2$, let $C_{0b}(S_{1,n})$ be the full subcomplex of the curve complex $C(S_{1,n})$ whose vertices are isotopy classes 
of either nonseparating simple closed curves or simple closed curves which bound a $2$-punctured disc on $S_{0,n}$.
\end{enumerate}
\end{definition}

\begin{lemma}\label{complexconnected}\leavevmode
\begin{enumerate}
\item For $n\geq 5$, the simplicial complex $C_b(S_{0,n})$ is connected.
\item For $n\geq 2$, the simplicial complex $C_{0b}(S_{1,n})$ is connected.
\end{enumerate}
\end{lemma}
\begin{proof}(i): This can be proved by the same argument which proves the connectivity of the standard curve complex 
(cf., for instance, the proof of \cite[Theorem~4.3]{FM}). We use induction on the geometric intersection number of two simple closed curves 
$a$ and $b$ which bound a $2$-punctured disc on $S_{0,n}$. When they are disjoint there is nothing to prove. Let us then assume that $a$ and $b$ 
have geometric intersection $i(a,b) >0$. We claim that there is a simple closed curve $c$ (bounding a $2$-punctured disc) such that 
$i(a,c)=0$ and $i(c,b) < i(a,b)$. 

First, we construct a simple closed curve $c'$ (not necessarily bounding a $2$-punctured disc), such that $i(a,c')=0$ and $i(c',b) < i(a,b)$, 
by  following the oriented path $a$ until it intersects $b$, 
further following $b$ until it intersects again $a$ for the first time and eventually continuing along the path $a$ in order to close the loop. 
There are several possibilities depending on the orientations of $a$ and $b$, but, in the end, we find an essential simple closed curve with 
the required properties. 

Now, $c'$ bounds a $k$-punctured disc $D$, where $k\geq 2$, which does not contain $a$. If $k=2$, then we take $c=c'$ and we are done.
Otherwise, we take for $c$ the boundary of a $2$-punctured disc $D'$ contained in $D$ such that $c$ is the union of an arc contained in $c'$ and 
an arc which is either part of $b\cap D$ or is disjoint from $b$. In both cases, we have that $i(c,b)\leq i(c',b)< i(a,b)$ and, obviously, $i(a,c)=0$.
\smallskip

\noindent
(ii): We proceed as above.  Here, the curve $c$ is either nonseparating or 
it bounds a $2$-punctured disc but the proof above works without any essential change. 
\end{proof}

\subsection{Proof of Theorem~\ref{completepantsrigidity} for $g(S)=0$}\label{Propgenus0}
We proceed by induction on $n\geq 5$. The case $n=5$ was proved in Section~\ref{05case} and serves as base for the induction.
Let us then assume that Theorem~\ref{completepantsrigidity} holds for $S_{0,n-1}$ and let us prove it for $S_{0,n}$.

For $n\geq 5$, every simplex $\s\in O(S_{0,n})$ contains at least a simple closed curve $\g$ on $S_{0,n}$ which bounds a $2$-punctured disc. 
We then have:

\begin{lemma}\label{orientlink}If an automorphism $\phi\in\Aut(\kC_P(S_{0,n}))$ preserves the orientation of the profinite Farey
subgraph $\hF_\s$ of $\kC_P(S_{0,n})$ (cf.\ Section~\ref{orientpants}), then it preserves the orientations of all profinite subgraphs 
$\hF_{\s'}$ such that $\g\in\s'$, where $\g\in\s$ is a simple closed curve which bounds a $2$-punctured disc.
\end{lemma}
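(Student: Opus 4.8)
The plan is to reduce the statement to the inductive hypothesis via the subgraph $L_\g$. Since $\g$ bounds a $2$-punctured disc on $S_{0,n}$, cutting $S_{0,n}$ along $\g$ produces a pair of pants and a copy of $S_{0,n-1}$, so that $S_\g\cong S_{0,n-1}$. By Lemma~\ref{linkpants}, there is then a natural isomorphism $L_\g\cong\kC_P(S_{0,n-1})$, and the profinite Farey subgraphs $\hF_{\s'}$ with $\g\in\s'$ are precisely the profinite Farey subgraphs contained in $L_\g$; under this isomorphism they correspond bijectively to all the profinite Farey subgraphs of $\kC_P(S_{0,n-1})$.

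First I would produce, out of $\phi$, an automorphism of $\kC_P(S_{0,n-1})$. By Proposition~8.3 in \cite{BF}, $\phi$ acts on $\partial\ol{\M}(S_{0,n})$, hence on the set of closed strata of its DM boundary, that is on $\kC(S_{0,n})$; this induced automorphism preserves the $\kG(S_{0,n})$-orbits of simplices and in particular the topological type of each simple closed curve. Thus the image $\phi(\g)$ again bounds a $2$-punctured disc, and $\phi$ maps the set of pants decompositions containing $\g$ bijectively onto the set of those containing $\phi(\g)$; equivalently, $\phi$ restricts to an isomorphism $\phi|_{L_\g}\co L_\g\stackrel{\sim}{\to}L_{\phi(\g)}$. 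Composing with the isomorphisms $L_\g\cong\kC_P(S_{0,n-1})$ and $L_{\phi(\g)}\cong\kC_P(S_{0,n-1})$ of Lemma~\ref{linkpants}, I obtain an automorphism $\psi\in\Aut(\kC_P(S_{0,n-1}))$.

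The key point is that these identifications are orientation-preserving. The orientation of a profinite Farey subgraph $\hF_{\s'}$ is defined through the complex structure of the corresponding $1$-dimensional stratum of $\partial\ol{\M}(S_{0,n})$ (Section~\ref{orientpants}); for $\g\in\s'$ this stratum is contained in the closed stratum obtained by pinching $\g$, which, together with its complex structure, is isomorphic to the relevant locus of $\partial\ol{\M}(S_{0,n-1})$. Hence both isomorphisms $L_\g\cong\kC_P(S_{0,n-1})$ and $L_{\phi(\g)}\cong\kC_P(S_{0,n-1})$ carry the standard orientations to the standard orientations, and the sign with which $\phi$ acts on the orientation of $\hF_{\s'}$ coincides with the sign with which $\psi$ acts on the orientation of the corresponding profinite Farey subgraph of $\kC_P(S_{0,n-1})$.

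It then remains to invoke the induction. Since $\g\in\s$, the hypothesis that $\phi$ preserves the orientation of $\hF_\s$ says exactly that $\psi$ preserves the orientation of one profinite Farey subgraph of $\kC_P(S_{0,n-1})$. By Proposition~\ref{completepantsrigidity} for $S_{0,n-1}$ (the inductive hypothesis), all the orientation-signs of an automorphism of $\kC_P(S_{0,n-1})$ on its profinite Farey subgraphs coincide, so $\psi$ preserves the orientation of every profinite Farey subgraph of $\kC_P(S_{0,n-1})$. Transporting this back through the orientation-preserving identifications shows that $\phi$ preserves the orientation of every $\hF_{\s'}$ with $\g\in\s'$. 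The step I expect to be the most delicate is the orientation-compatibility of the previous paragraph: one must check that the link identification of Lemma~\ref{linkpants} sends the standard orientation coming from the complex structure on $\partial\ol{\M}(S_{0,n})$ exactly to (and not to the opposite of) the standard orientation on $\partial\ol{\M}(S_{0,n-1})$, uniformly over all Farey subgraphs of $L_\g$.
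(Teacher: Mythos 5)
Your proposal is correct and follows essentially the same route as the paper: reduce to $L_\g\cong\kC_P(S_{0,n-1})$ via Lemma~\ref{linkpants} and invoke Proposition~\ref{completepantsrigidity} for $S_{0,n-1}$ as the inductive hypothesis. The only cosmetic difference is that the paper first composes $\phi$ with an element of $\Inn(\kG(S_{0,n}))$ (harmless, since inner automorphisms preserve all the orientations) so that $\phi$ fixes $\{\g\}$ and restricts to an automorphism of $L_\g$ itself, which sidesteps most of the orientation-compatibility bookkeeping between $L_\g$ and $L_{\phi(\g)}$ that you rightly flag as the delicate point.
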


\begin{proof}By \cite[Theorem~6.7]{BF}, for all hyperbolic surfaces $S$, there is a natural monomorphism: 
\[\check{\Theta}_P\co\Aut(\kC_P(S))\hookra\Aut(\kC(S)),\]  
which is induced by the identification of the vertices of $\kC_P(S)$ with the facets of $\kC(S)$. 

Let us briefly recall the proof
of this theorem. For $d(S)=0$, we have that $\kC_P(S)_0=\kC(S)_0$ and $\dim\kC(S)=0$ and so the statement is obvious. 
For $d(S)>1$, from  \cite[Lemma~6.6, (ii)]{BF}), it follows that the continuous automorphisms of $\kC_P(S)$ preserve
Farey subgraphs and so there is a continuous action of $\Aut(\kC_P(S))$ on the profinite set of Farey subgraphs of $\kC_P(S)$.
These are parameterized by the profinite set of $(d(s)-2)$-simplices of $\kC(S)$, thus there is an induced continuous action on the latter set.
By \cite[Lemma~6.6, (iii)]{BF}), this continuous action induces a monomorphism $\Aut(\kC_P(S))\hookra\Aut(\kC^\ast(S))$,
where $\kC^\ast(S)$ is the dual graph of $\kC(S)$ (cf.\cite[Definition~3.9]{BF}). By \cite[Lemma~6.5]{BF},
we then have that $\Aut(\kC^\ast(S))=\Aut(\kC(S))$ and the conclusion follows.

Thus, after possibly composing the given automorphism $\phi\in\Aut(\kC_P(S_{0,n}))$ with an element in the image of $\Inn(\kG(S_{0,n}))$, 
we can assume that its image $\check{\Theta}_P(\phi)$ in $\Aut(\kC(S_{0,n}))$ preserves the $0$-simplex $\{\g\}\in\kC(S_{0,n})$ and so $\phi$ 
preserves the subgraph $L_\g$ of $\kC_P(S_{0,n})$. Since, by Lemma~\ref{linkpants}, we have that $L_\g\cong\kC_P(S_\g)$, from 
the induction hypothesis, it follows that, if the automorphism $\phi$ preserves the orientation of some profinite Farey subgraph of 
$L_\g$, then $\phi$ preserves the orientation of all profinite Farey subgraphs $\hF_\s$ such that $\g\in\s$. 
\end{proof}

By (i) of Lemma~\ref{complexconnected}, the curve complex $C_b(S_{0,n})$ is connected. Thus, there is a set $\g_1,\dots,\g_k$ of simple closed curves 
on $S_{0,n}$ bounding a $2$-punctured disc such that any two representatives $\s$ and $\s'$
of the set of orbits $O(S_{0,n})$ are contained in a chain $\Link(\g_1),\dots,\Link(\g_k)$ with the property that the intersection $L_{\g_i}\cap L_{\g_{i+1}}$, 
for $1\leq i\leq k-1$, contains at least a profinite Farey subgraph. Lemma~\ref{orientlink} and a simple induction then imply that an automorphism 
$\phi\in\Aut(\kC_P(S_{0,n}))$, which preserves the orientation of $\hF_\s$, also preserves 
the orientation of $\hF_{\s'}$. This completes the proof of Theorem~\ref{completepantsrigidity} for $g(S)=0$.

\subsection{Proof of Theorem~\ref{completepantsrigidity} for $g(S)\geq 1$}
Let us first consider the case $g(S)=1$. Here, we need to use the curve complex $C_{0b}(S_{1,n})$ instead of the curve complex $C_b(S_{0,n})$ 
and (ii) instead of (i) of Lemma~\ref{complexconnected}. We then proceed by induction on $n\geq 2$. 

The base of the induction is provided by the case $S=S_{1,2}$ proved above. 
The induction step essentially proceeds as in the case $g(S)=0$ (cf.\ Section~\ref{Propgenus0}). 
The only difference is that, if $\g_i$, for $1\leq i\leq k$, is a nonseparating simple closed curve on $S$, then the link $\Link(\g_i)$
is isomorphic to the procongruence curve complex of a genus $0$ surface, so that, in this case, instead of the induction hypothesis, 
we need to use the genus $0$ case of Theorem~\ref{completepantsrigidity}, which we already proved.

For $g(S)\geq 2$, we proceed by induction on the genus where the base of the induction is the genus $1$ case proved above.
The relevant curve complex here is the complex of nonseparating curves $C_0(S)$, which, for $g(S)\geq 2$, (cf.\ \cite[Theorem~4.4]{FM}) 
is connected. The rest of the argument proceeds as in the previous cases.

\subsection{A rigidity criterion}
From Theorem~\ref{completepantsrigidity}, it follows that, for $d(S)>1$, there is a natural isomorphism:
\begin{equation}\label{basiciso}
\Inn(\kG^{\pm}(S))\cong\Aut(\kC_P(S)).
\end{equation}
From this isomorphism, we will derive a characterization of those elements of $\Aut^\I(\kPG(S))$ which are induced by
an inner automorphism of $\kG^{\pm}(S)$. Before we state the result, we need to make the following remark:

\begin{remark}\label{gtrealization}
In the group-theoretic realization of the procongruence curve complex $\kC(S)$ which we described in Remark~\ref{groupthreal}, 
the vertices of the procongruence pants complex $\kC_P(S)$ are identified with the set $\{\hI_\s\}_{\s\in\kC(S)_{d(S)-1}}$ of inertia groups 
of $\kPG(S)$ of maximal rank. The natural faithful continuous action of $\Aut^\I(\kPG(S))$ on the curve complex $\kC(S)$ (cf.\ Lemma~\ref{faithful}) 
then induces a continuous faithful action of $\Aut^\I(\kPG(S))$ on the vertex set $\kC_P(S)_0$ of the procongruence pants complex. 
\end{remark}

We have:

\begin{theorem}\label{mainlemma}Let $S$ be a connected hyperbolic surface such that $d(S)> 1$. An element $f\in\Aut^\I(\kPG(S))$ 
is in the image of $\Inn(\kG^{\pm}(S))\to\Aut^\I(\kPG(S))$ if and only if, for some edge $\{v_0,v_1\}\in \kC_P(S)_1$, the set of vertices $\{f(v_0),f(v_1)\}$ 
is also an edge of $\kC_P(S)$. 
\end{theorem}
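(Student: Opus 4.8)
The plan is to prove the two implications separately, the forward one being immediate and the reverse one occupying the bulk of the argument.

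For the forward implication, suppose $f$ is the restriction to $\kPG(S)$ of $\inn g$ for some $g\in\kG^{\pm}(S)$. Since $\kPG(S)$ is normal in $\kG^{\pm}(S)$ and conjugation preserves the set of conjugacy classes of procyclic subgroups generated by profinite Dehn twists, the induced action of $f$ on $\kC_P(S)_0$ agrees with the tautological action of $g$ on the procongruence pants complex, which is by graph automorphisms. Hence $f$ sends \emph{every} edge of $\kC_P(S)$ to an edge, and in particular the given one. For the reverse implication, I would first transport the problem to the curve complex. By Lemma~\ref{faithful} the element $f$ determines $\check{\Theta}_\I(f)\in\Aut(\kC(S))$, whose action on the facets $\kC(S)_{d(S)-1}=\kC_P(S)_0$ is exactly the given action of $f$ on vertices of $\kC_P(S)$. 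The goal is to show that $\check{\Theta}_\I(f)$ sends \emph{all} edges of $\kC_P(S)$ to edges; once this is known, $\check{\Theta}_\I(f)$ lies in the image of the monomorphism $\check{\Theta}_P\co\Aut(\kC_P(S))\hookra\Aut(\kC(S))$, and the rigidity isomorphism $\Inn(\kG^{\pm}(S))\cong\Aut(\kC_P(S))$ (from Proposition~\ref{completepantsrigidity} together with Proposition~8.2 in \cite{BF}; for $S=S_{1,2}$ one restricts to those automorphisms preserving nonseparating curves) produces $h\in\kG^{\pm}(S)$ whose inner action agrees with $f$ on all facets. Since a simplicial automorphism of $\kC(S)$ is determined by its action on facets, this forces $\check{\Theta}_\I(\inn h)=\check{\Theta}_\I(f)$, and the injectivity in Lemma~\ref{faithful} gives $f=\inn h|_{\kPG(S)}$.

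It remains to propagate edge-preservation from the single given edge to all of $\kC_P(S)$, which I would carry out by induction on $d(S)$, the base case being $d(S)=2$, i.e.\ $S\in\{S_{0,5},S_{1,2}\}$. For the inductive step ($d(S)\geq 3$), write the given edge as $\{\s_0,\s_1\}$ with common face $\tau=\s_0\cap\s_1\in\kC(S)_{d(S)-2}$ and choose, as in Definition~\ref{link}, a curve $\g\in\tau$ which is nonseparating or bounds a $2$-punctured disc. Since the action of $\Aut^\I(\kPG(S))$ on $\kC(S)$ preserves the $\kG^{\pm}(S)$-orbit (topological type) of each simplex, which one reads off from the group-theoretic characterization of inertia groups, I may compose $f$ with a suitable inner automorphism of $\kG^{\pm}(S)$ (changing neither the conclusion nor the edge hypothesis) so as to arrange $f(\hI_\g)=\hI_\g$ with both the edge and its image contained in $L_\g$. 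By Lemma~\ref{factor} the resulting action of $f$ on $\Link(\g)\cong\kC(S_\g)$, hence on $L_\g\cong\kC_P(S_\g)$ by Lemma~\ref{linkpants}, is induced by an element $f_\g\in\Aut^\I(\kPG(S_\g))$ that sends an edge to an edge. As $d(S_\g)=d(S)-1\geq 2$, the inductive hypothesis shows $f_\g\in\Inn(\kG^{\pm}(S_\g))$, so $f$ preserves every edge of $L_\g$. Finally, every $(d(S)-2)$-simplex contains a curve of the above type and, by Lemmas~\ref{complexconnected1} and~\ref{complexconnected2} (and the connectedness of the complex of nonseparating curves in higher genus), these curves form a connected complex in which adjacent vertices are disjoint and hence extend to a common $(d(S)-2)$-simplex; consequently overlapping links $L_\g,L_{\g'}$ share a profinite Farey subgraph. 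On such a shared subgraph edge-preservation is already known, which supplies the edge hypothesis at the next curve, and a straightforward induction along connecting chains yields edge-preservation on every profinite Farey subgraph $\hF_\s$, hence on all of $\kC_P(S)$.

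The main obstacle is the base case $d(S)=2$, where $S_\g$ has modular dimension $1$ and $L_\g$ is a single profinite Farey subgraph, so the chain-propagation above is unavailable. Here I would again invoke Lemma~\ref{factor} to reduce to the profinite Farey graph itself, where the adjacency of two profinite Dehn twists is detected algebraically (by the braid relation in the $S_{1,1}$ case and its analogue in the $S_{0,4}$ case), so that preserving one edge forces $f$ to preserve all edges of that subgraph. Promoting this to a genuine automorphism of $\kC_P(S_{0,5})$ — equivalently, synchronizing the edge structure across the profinite set of Farey subgraphs — is the delicate point, and I would settle it through the conformal rigidity of $\partial\ol{\M}(S_{0,5})$ encoded in Lemmas~\ref{aut+}, \ref{orientation05} and~\ref{autprocurve}, transferring the case $S=S_{1,2}$ to $S_{0,5}$ exactly as in the treatment of $S_{1,2}$ given above.
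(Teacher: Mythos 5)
Your forward implication and your inductive step for $d(S)>2$ follow the paper's actual route (reduce to edge--preservation, pass to links $L_\g\cong\kC_P(S_\g)$ via Lemmas~\ref{linkpants} and~\ref{factor}, propagate along chains in $C_b$, $C_{0b}$ or $C_0$, then invoke Proposition~\ref{completepantsrigidity}). But the base case $d(S)=2$, which is where the entire content of the theorem lives, has a genuine gap, and the idea you propose for it cannot work. You claim that adjacency in a profinite Farey subgraph "is detected algebraically (by the braid relation)", so that preserving one edge forces preserving all of them. If adjacency of two maximal inertia groups were characterized by any condition invariant under $\Aut^\I(\kPG(S))$, then \emph{every} element of $\Aut^\I(\kPG(S_{0,5}))$ would preserve all edges of $\kC_P(S_{0,5})$ and hence be inner by Proposition~\ref{completepantsrigidity} --- contradicting the fact that $\Out^\I(\kPG(S_{0,5}))$ contains $\Sigma_5\times\Out^\sharp(\kPG(S_{0,5}))$ with $\Out^\sharp\supseteq G_\Q$ (Lemma~\ref{GT} and Corollary~7.6 of \cite{Boggi}): the $\GT$-action on the profinite Farey graph fixes the vertex set but does \emph{not} preserve edges. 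If instead you intend the braid relation on specific generators (which is not $\Aut^\I$-invariant, since $f$ may send $\tau_\g$ to $\tau_{\g'}^{\lambda}$, $\lambda\in\hZ^\ast$), then you owe an argument for why knowledge of a single preserved edge propagates it, and none is given. The appeal to Lemmas~\ref{aut+}, \ref{orientation05} and~\ref{autprocurve} to "synchronize" the Farey subgraphs is circular: those lemmas describe $\Aut(\kC_P(S_{0,5}))\cong\Aut(\partial\ol{\M}(S_{0,5}))$, i.e.\ they apply only to maps already known to be automorphisms of the pants graph, which is exactly what is to be proved.

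The paper's actual argument for $S_{0,5}$ uses an external input you have no substitute for: by Corollary~C of \cite{HMM} the quotient $\Sigma_5$ of $1\to\kPG(S_{0,5})\to\kG(S_{0,5})\to\Sigma_5\to1$ maps onto a \emph{normal} subgroup of $\Out^\I(\kPG(S_{0,5}))$, so Wells' criterion (Lemma~\ref{Wells}) extends $f$ to an automorphism of all of $\kG(S_{0,5})$. After normalizing $f$ to fix $v_0,v_1$ (transitivity on oriented edges), any edge is $x\cdot\{v_0,v_1\}\cdot x^{-1}$ with $x\in\kG(S_{0,5})$, and the extension gives $f(x\cdot v_i\cdot x^{-1})=f(x)\cdot v_i\cdot f(x)^{-1}$, so every edge goes to an edge; this is precisely where the single-edge hypothesis does its work. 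Similarly, the base case $S=S_{1,2}$ cannot be dispatched by "transferring to $S_{0,5}$ exactly as before": the pants graphs of $S_{1,2}$ and $S_{0,5}$ share vertex sets but have different edges, so one must prove (the paper's Lemma~\ref{reductionstep}, using Lemmas~\ref{S_{1,2}} and~\ref{GT} and a case division according to whether the common curve of the given edge is separating) that the transferred automorphism $\td f$ still satisfies the edge hypothesis on $\kC_P(S_{0,5})$. Finally, a minor point in your inductive step: the chain argument only reaches the discrete Farey subgraphs, so you should first reduce to $\kPG(S)$-orbit representatives of edges using $f(x\alpha x^{-1})=f(x)f(\alpha)f(x)^{-1}$ for $x\in\kPG(S)$, as the paper does.
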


For the proof, we will need the following simple lemma in group theory:

\begin{lemma}\label{Wells}Let $1\to H\to G\to L\to 1$ be a short exact sequence of groups and let $f$ be an automorphism 
of $H$ and suppose that:
\begin{enumerate}
\item the center of $H$ is trivial;
\item the image $\bar f$ of $f$ in $\Out(H)$ normalizes the image of the outer representation $\rho\co L\to\Out(H)$ 
associated to the given short exact sequence;
\item the automorphism of $\rho(L)$ induced by the restriction of $\inn\bar f$ lifts to an automorphism of $L$.
\end{enumerate}
Then, $f$ extends to an automorphism of $G$.
\end{lemma}

\begin{proof}For an element $f\in\Aut(H)$, we denote by $\bar f$ its image in $\Out(H)$.
Let then $\mathrm{Comp}(L,H)$ be the subgroup of $\Aut(L)\times\Aut(H)$ formed by the pairs $(\psi,f)$
such that, for all $\alpha\in L$, there holds (in $\Out(H)$):
\[\bar f\rho(\alpha)\bar f^{-1}=\rho(\psi(\alpha)).\]

Since, by hypothesis (i), the center of $H$ is trivial, according to Wells' exact sequence (cf.\ \cite[Theorem]{Wells}), there is a canonical isomorphism:
\[\Aut(G)_H\cong\mathrm{Comp}(L,H),\]
where $\Aut(G)_H$ is the subgroup of $\Aut(G)$ consisting of those automorphisms which preserve $H$.
This isomorphism sends an element $\tilde f\in\Aut(G)_H$ to the pair $(\psi,f)$, where $\psi\in\Aut(L)$ is the automorphism 
induced by $\tilde f$ passing to the quotient by the normal subgroup $H$ and $f$ is the restriction of $\tilde f$ to $H$.

The conclusion follows if we show that an $f\in\Aut(H)$, which satisfies the hypotheses (ii) and (iii) of the lemma, is part of a compatible pair. 
Since $\inn\bar f$ preserves the subgroup $\rho(L)$ and the induced automorphism lifts to $\phi\in\Aut(L)$, it is clear that $(\phi,f)$ 
is such a compatible pair.
\end{proof}

\subsection{Proof of Theorem~\ref{mainlemma} for $S=S_{0,5}$}\label{mainlemma=1} 
Let $f\in\Aut^\I(\kPG(S))$ be an element satisfying the hypotheses of the theorem. 
Since, for $S=S_{0,5}$, there is only one topological type of both vertices and edges of the pants complex $C_P(S)$,
the action of $\kG(S)$ on the set of oriented edges of $\kC_P(S)$ is transitive. Therefore, 
for any edge $e=\{\alpha_0,\alpha_1\}$ of $\kC_P(S)$, there is an element $x_e\in\kG(S)$ such that $\alpha_i=x_e\cdot v_i\cdot x_e^{-1}$, for $i=0,1$
(cf.\ Remark~\ref{gtrealization}).

Let us consider the short exact sequence $1\to\kPG(S)\to\kG(S)\to\Sigma_n\to 1$, where we put $n:=n(S)$. As it is well known, 
$\cM_{0,3}$ is a point and, for $n\geq 4$, the moduli space $\cM_{0,n}$ is naturally isomorphic to the configuration space of $n-3$ ordered points on 
$\P^1\ssm\{0,1,\infty\}$, since $\Aut(\P^1)$ acts simply transitively on triples of distinct points of $\P^1$. Hence,
the group which in \cite[Corollary~C]{HMM} is denoted by $\Pi_n$ is isomorphic to $\kPG(S)$, so that \cite[Corollary~C]{HMM} implies 
that the outer action of $\Sigma_n$ on $\kPG(S)$, associated to the short exact sequence above, identifies $\Sigma_n$ with a normal subgroup 
of $\Out(\kPG(S))$. 

In particular, the automorphism $f\in\Aut^\I(\kPG(S))$ satisfies the second hypothesis of Lemma~\ref{Wells}. Since $\kPG(S)$ is center-free 
and the outer representation $\Sigma_n\to\Out(\kPG(S))$ is faithful, the other two hypotheses are also satisfied. 
Therefore, by Lemma~\ref{Wells}, $f$ extends to an automorphism $\tilde f$ of $\kG(S)$
which preserves procyclic inertia groups (cf.\ Definition~\ref{inertiagroups}) and so is contained in $\Aut^\I(\kG(S))$ (cf.\ Definition~\ref{inertiapreserving}).

There is then a series of identities, for $i=0,1$:
\[f(\alpha_i)=\tilde f(\alpha_i)=\tilde f(x_e\cdot v_i\cdot x_e^{-1})=\tilde f(x_e)\cdot \tilde f(v_i)\cdot \tilde f(x_e)^{-1}=\tilde f(x_e)\cdot f(v_i)\cdot\tilde f(x_e)^{-1},\]
so that $\{f(\alpha_0),f(\alpha_1)\}=\inn \tilde f(x_e)(\{f(v_0),f(v_1)\})\in\kC_P(S)_1$.
Therefore, the continuous action of the automorphism $f$ on the profinite set of vertices of $\kC_P(S)$ extends to a continuous action 
on the procongruence pants complex $\kC_P(S)$. The conclusion then follows from the isomorphism~\eqref{basiciso}.

\subsection{Proof of Theorem~\ref{mainlemma} for $S=S_{1,2}$}\label{mainlemma=2} 
As for the proof of the case $S=S_{1,2}$ of Theorem~\ref{completepantsrigidity} (cf.\ Section~\ref{orientS_{1,2}}), the idea is to reduce to the
genus $0$ case. However, as in the proof of Theorem~\ref{completepantsrigidity}, this is complicated by the fact that, although 
the curve complexes $C(S_{1,2})$ and $C(S_{0,5})$ are naturally isomorphic, there is not even a map between the corresponding pants graphs.
To deal with this case, we will then need first to prove a series of lemmas:

\begin{lemma}\label{S_{1,2}}For $S=S_{1,2}$, the action of $\Aut^\I(\kPG(S_{1,2}))$ on $\kC(S_{1,2})$ preserves topological types. Moreover, there is
a natural monomorphism $\Aut^\I(\kPG(S_{1,2}))\hookra\Aut^\I(\kPG(S_{0,5}))$ induced by restriction of automorphisms.
\end{lemma}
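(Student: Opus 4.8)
The plan is to prove Lemma~\ref{S_{1,2}} by exploiting the Birman--Hilden identification of $\PG(S_{1,2})$ with $\G(S_{0,5})_Q$ established earlier in Section~\ref{orientS_{1,2}}, together with the structural description of $\PG(S_{0,5})$ inside $\PG(S_{1,2})$ given in Lemma~\ref{squares}. The key observation is that an element $f\in\Aut^\I(\kPG(S_{1,2}))$ is constrained by the two types of profinite Dehn twists in $\kPG(S_{1,2})$: those about nonseparating curves and those about separating curves (bounding a $1$-punctured subsurface of genus one, equivalently a $2$-punctured disc in the genus zero picture). I would first show that $f$ cannot mix these two topological types.

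To see this, I would look at the algebraic invariants that distinguish the inertia subgroups. For a nonseparating curve $\g$, the complement $S_{1,2}\ssm\g$ is connected of modular dimension $d(S_{1,2})-1=1$, whereas for a separating curve the complement has a genus-zero and a genus-one piece. These are distinguished by the structure of the centralizer $Z_{\kPG(S)}(\hI_\g)\cong\kPG(S)_\g$ appearing in Lemma~\ref{factor}, or equivalently by the fact that squares of nonseparating Dehn twists generate the proper normal subgroup $\kPG(S_{0,5})\subsetneq\kPG(S_{1,2})$ (the procongruence analogue of Lemma~\ref{squares}), while separating twists do not lie in this subgroup. Since any $f\in\Aut^\I(\kPG(S_{1,2}))$ preserves the set of conjugacy classes of procyclic subgroups generated by profinite Dehn twists, and since it must preserve this distinguished normal subgroup and the corresponding quotient, it must send nonseparating-twist inertia to nonseparating-twist inertia and separating-twist inertia to separating-twist inertia. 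This is exactly the statement that the induced action on $\kC(S_{1,2})$ preserves topological types.

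For the second assertion, I would use the preservation of topological types to produce the monomorphism $\Aut^\I(\kPG(S_{1,2}))\hookra\Aut^\I(\kPG(S_{0,5}))$. Because $f$ preserves the normal subgroup $\kPG(S_{0,5})\trianglelefteq\kPG(S_{1,2})$ (the closure of the group generated by squares of nonseparating Dehn twists, by the procongruence version of Lemma~\ref{squares}), restriction gives a homomorphism $\Aut^\I(\kPG(S_{1,2}))\to\Aut(\kPG(S_{0,5}))$. I would check that the restricted automorphism again preserves conjugacy classes of procyclic groups generated by profinite Dehn twists of $\kPG(S_{0,5})$: these Dehn twists are precisely the images of squares of nonseparating Dehn twists of $\kPG(S_{1,2})$, namely twists about curves bounding $2$-punctured discs in $S_{0,5}$, and the topological-type preservation just proved guarantees this. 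Hence the restriction lands in $\Aut^\I(\kPG(S_{0,5}))$. Injectivity follows because $\kPG(S_{0,5})$ is a subgroup of $\kPG(S_{1,2})$ of finite index, so an automorphism of $\kPG(S_{1,2})$ that is trivial on $\kPG(S_{0,5})$ is inner by an element centralizing a finite-index subgroup, hence trivial since $\kPG(S_{1,2})$ is center free (Theorem~4.14 in \cite{BF} gives the relevant centrality statements); more directly, the faithfulness of $\check\Theta_\I$ in Lemma~\ref{faithful} together with the injection on vertex sets forces triviality.

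The main obstacle I anticipate is the procongruence upgrade of Lemma~\ref{squares}: I must verify that the closure in $\kPG(S_{1,2})$ of the subgroup generated by squares of nonseparating Dehn twists really is all of $\kPG(S_{0,5})$ and that this identification is compatible with the profinite completions, so that the restriction map is well-defined and the type-preservation argument applies verbatim in the procongruence setting. This requires knowing that the Birman--Hilden exact sequence behaves well under procongruence completion, which should follow from the compatibility of congruence levels discussed in Section~\ref{orientS_{1,2}} (the identifications $\M(S_{0,5})\cong\M(S_{1,2})$ and $\ol{\M}(S_{0,5})\cong\ol{\M}(S_{1,2})$), but the exactness and the closedness of the relevant subgroups in the profinite topology are the delicate points to pin down.
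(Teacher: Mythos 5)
Your overall architecture (establish type--preservation first, then restrict to the normal subgroup $\kPG(S_{0,5})$, then get injectivity via the curve complex) matches the paper's, and your last two steps are essentially sound: the restriction homomorphism and the injectivity argument via Lemma~\ref{faithful} and the isomorphism $\kC(S_{1,2})\cong\kC(S_{0,5})$ are exactly what the paper does. The genuine gap is in your first and central step: you never actually prove that $\Aut^\I(\kPG(S_{1,2}))$ preserves topological types. Your first suggested route (``distinguished by the structure of the centralizer'') is the right idea but stops short of naming the distinguishing invariant. The paper makes it concrete: given $f$ with $f(\g)=\g'$, one checks that $f$ carries $\kPG(S_{1,2})_{\vec{\g}}$ (the subgroup of $Z_{\kPG(S_{1,2})}(\hI_\g)$ topologically generated by profinite Dehn twists) to $\kPG(S_{1,2})_{\vec{\g}'}$, uses that $\hI_\g$ is the \emph{center} of $\kPG(S_{1,2})_{\vec{\g}}$ (Theorem~4.14 in \cite{BF}) to obtain an induced isomorphism $\kPG((S_{1,2})_\g)\cong\kPG((S_{1,2})_{\g'})$, and then observes that this quotient is $\wh{\SL(2,\Z)}$ (which has torsion) when $\g$ is separating and a free profinite group of rank two (torsion free) when $\g$ is nonseparating. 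Without some such $f$-invariant distinction the step is an assertion, not a proof.

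Your second suggested route is both circular and factually wrong. It is circular because the subgroup $\kPG(S_{0,5})$ is defined as the closure of the group generated by squares of \emph{nonseparating} Dehn twists, i.e.\ by reference to the very topological types whose preservation you are trying to establish; in the paper, the invariance of $\kPG(S_{0,5})$ under $\Aut^\I(\kPG(S_{1,2}))$ is a \emph{consequence} of type--preservation, not an input to it, and you give no independent reason why $f$ ``must preserve this distinguished normal subgroup.'' It is factually wrong because, under the Birman--Hilden identification $\PG(S_{1,2})\cong\G(S_{0,5})_Q$, the Dehn twist about a separating curve of $S_{1,2}$ corresponds to the \emph{square} of the Dehn twist about a curve surrounding $Q$ and one branch point, hence \emph{does} lie in $\PG(S_{0,5})$; it is the nonseparating twists whose first powers fail to lie there. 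Relatedly, not every Dehn twist of $\kPG(S_{0,5})$ is the image of the square of a nonseparating twist of $\kPG(S_{1,2})$ (those about curves surrounding $Q$ are not), so your verification of the $\I$-condition for the restriction also needs repair. Finally, your first injectivity argument requires knowing that the centralizer of the open subgroup $\kPG(S_{0,5})$ in $\kPG(S_{1,2})$ is trivial, which does not follow formally from center--freeness; keep only your alternative argument via Lemma~\ref{faithful}, which is the paper's.
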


\begin{proof}With the notation of Section~\ref{2lemmas}, for $\g$ a simple closed curve on $S_{1,2}$, by \cite[Corollary~4.12]{BF}, 
there is a natural isomorphism $Z_{\kPG(S_{1,2})}(\hI_\g)\cong\kPG(S_{1,2})_{\g}$ and, by \cite[Theorem~4.10]{BF}, there are exact sequences:
\[1\ra\kPG(S_{1,2})_{\vec{\g}}\to\kPG(S_{1,2})_{\g}\to\{\pm 1\}\hspace{0.5cm}\mbox{ and }\hspace{0.5cm}
1\ra\hat{\mathrm{I}}_\g\to\kPG(S_{1,2})_{\vec{\g}}\to\kPG((S_{1,2})_\g)\to 1.\]

After possibly composing with an inner automorphism, we can assume that a given element $f\in\Aut^\I(\kPG(S_{1,2}))$ is such that $\g':=f(\g)$ 
also belongs to $C(S_{1,2})_0\subset\kC(S_{1,2})_0$. Since $\kPG(S_{1,2})_{\vec{\g}}$ identifies with the subgroup of the centralizer 
$Z_{\kPG(S_{1,2})}(\hI_\g)$ topologically generated by profinite Dehn twists, we have that $f(\kPG(S_{1,2})_{\vec{\g}})=\kPG(S_{1,2})_{\vec{\g}'}$. 
By \cite[Theorem~4.14]{BF}, the procyclic subgroup $\hI_\g$ is the center of $\kPG(S_{1,2})_{\vec{\g}}$, so that $f$ induces an isomorphism 
$\bar f\co\kPG((S_{1,2})_\g)\stackrel{\sim}{\to}\kPG((S_{1,2})_{\g'})$.

For $\g$ separating, we have that $\kPG((S_{1,2})_\g)\cong\wh{\SL(2,\Z)}$ while, for $\g$ nonseparating, we have that $\kPG((S_{1,2})_\g)$
is a free group in two generators. The latter profinite group is torsion-free while the former is not. Thus, $\g'$ has the same topological type of $\g$.

This proves the first part of the lemma. Let us then observe that, by Lemma~\ref{squares}, $\kPG(S_{0,5})$ 
identifies with the normal subgroup of $\kPG(S_{1,2})$ topologically generated by 
squares of nonseparating Dehn twists. By the previous part of the proof, elements of $\Aut^\I(\kPG(S_{1,2}))$ preserve 
this subgroup and so there is a homomorphism as claimed in the lemma. 

In order to prove that this homomorphism is injective, let us observe that the natural monomorphism $\Aut^\I(\kPG(S_{1,2}))\hookra\Aut(\kC(S_{1,2}))$ 
(cf.\ Lemma~\ref{faithful}) factors through the given homomorphism $\Aut^\I(\kPG(S_{1,2}))\to\Aut^\I(\kPG(S_{0,5}))$ and the natural
monomorphism $\Aut^\I(\kPG(S_{0,5}))\hookra\Aut(\kC(S_{0,5}))$, via the isomorphism $\kC(S_{1,2})\cong\kC(S_{0,5})$.
This concludes the proof of the lemma. 
\end{proof}

Since all the groups involved are center-free, for $n\geq 4$, there is a series of natural isomorphisms: 
\begin{equation}\label{sigmaiso}
\Inn(\kG(S_{0,n}))\left/\Inn(\kPG(S_{0,n}))\right.\cong\kG(S_{0,n})\left/\kPG(S_{0,n})\right.\cong\Sigma_n.
\end{equation}

Let us denote by $\Out^\I_{\Sigma_n}(\kPG(S_{0,n}))$\index{$\Out^\I_{\Sigma_n}(\kPG(S_{0,n}))$, the centralizer of $\Sigma_n$ in $\Out^\I(\kPG(S_{0,n}))$} the centralizer of the image of $\Sigma_n$ in
$\Out^\I(\kPG(S_{0,n}))$. Let us recall that $\Out^\sharp(\kPG(S_{0,n}))$\index{$\Out^\sharp(\kPG(S_{0,n}))$, the subgroup of elements of $\Out(\kPG(S_{0,n}))$ commuting with $\Sigma_n$
and preserving each conjugacy class of the procyclic subgroups of $\kPG(S_{0,n})$ generated by a Dehn twist about a
simple closed curve bounding a $2$-punctured disc}  was defined in \cite[Section~0.1]{HS}, for $n\geq 4$,
to be the subgroup of $\Out(\kPG(S_{0,n}))$ consisting of those elements which commute with the image of $\Sigma_n$
and preserve each conjugacy class of the procyclic subgroups of $\kPG(S_{0,n})$ generated by a Dehn twist about a
simple closed curve bounding a $2$-punctured disc in $S_{0,n}$. We have:

\begin{lemma}\label{HarbSch}For $n=4$ and $5$, there holds $\Out^\sharp(\kPG(S_{0,n}))=\Out^\I_{\Sigma_n}(\kPG(S_{0,n}))$.
\end{lemma}

\begin{proof}For $n=4,5$, all essential simple closed curves on $S_{0,n}$ bound a $2$-punctured disc. Therefore, there is at least an inclusion
$\Out^\sharp(\kPG(S_{0,n}))\subseteq\Out^\I_{\Sigma_n}(\kPG(S_{0,n}))$. To prove that the reverse inclusion holds, we have to show
that the elements of $\Out^\I_{\Sigma_n}(\kPG(S_{0,n}))$ preserve every conjugacy class of procyclic subgroup of $\kPG(S_{0,n})$ 
generated by a Dehn twist. By definition, the group $\Out^\I(\kPG(S_{0,n}))$ acts on the set of such conjugacy classes with its subgroup
$\Sigma_n$ acting transitively on it. This easily implies that those elements which commute with the action of $\Sigma_n$
act trivially on this set.
\end{proof}

From the isomorphism $\PG(S_{1,2})\cong\G(S_{0,5})_Q$ (cf.\ Section~\ref{orientS_{1,2}}) and the isomorphisms~\eqref{sigmaiso}, 
it follows that there is a series of isomorphisms $\Inn(\PG(S_{1,2}))\left/\Inn(\kPG(S_{0,5}))\right.\cong(\Sigma_5)_Q\cong\Sigma_4$. 
Let us then define:
\begin{itemize}
\item $\wt{\Out}^\I(\kPG(S_{1,2})):=\Aut^\I(\kPG(S_{1,2}))\left/\Inn(\kPG(S_{0,5}))\right.$;
\index{$\wt{\Out}^\I(\kPG(S_{1,2})):=\Aut^\I(\kPG(S_{1,2}))\left/\Inn(\kPG(S_{0,5}))\right.$}
\item $\wt{\Out}^\I_{\Sg_4}(\kPG(S_{1,2}))$
\index{$\wt{\Out}^\I_{\Sg_4}(\kPG(S_{1,2}))$, the centralizer of the image of $\Sigma_4$ in $\wt{\Out}^\I(\kPG(S_{1,2}))$} 
to be the centralizer of the image of $\Sigma_4$ in $\wt{\Out}^\I(\kPG(S_{1,2}))$;
\item $\Aut^\I_{\Sg_5}(\kPG(S_{0,5}))$
\index{$\Aut^\I_{\Sg_5}(\kPG(S_{0,5}))$,  the inverse image of $\Out^\I_{\Sg_5}(\kPG(S))$ in $\Aut(\kPG(S_{0,5}))$}
 (resp.\ $\Aut^\I_{\Sg_4}(\kPG(S_{1,2}))$)
 \index{$\Aut^\I_{\Sg_4}(\kPG(S_{1,2}))$, the inverse image of \ $\wt{\Out}^\I_{\Sg_4}(\kPG(S_{1,2}))$ in $\Aut(\kPG(S_{1,2}))$}  
 to be the inverse image of $\Out^\I_{\Sg_5}(\kPG(S))$ (resp.\ $\wt{\Out}^\I_{\Sg_4}(\kPG(S_{1,2}))$) in $\Aut(\kPG(S_{0,5}))$ 
 (resp.\ $\Aut(\kPG(S_{1,2}))$). 
\end{itemize}

Note that $\Aut^\I_{\Sg_5}(\kPG(S_{0,5}))\subset\Aut^\I(\kPG(S_{0,5}))$ and $\Aut^\I_{\Sg_4}(\kPG(S_{1,2}))\subset\Aut^\I(\kPG(S_{1,2}))$. 
We then have:

\begin{lemma}\label{GT}There are natural isomorphisms:
\[\begin{array}{l}
\wt{\Out}^\I(\kPG(S_{1,2}))\cong\Sigma_4\times\wt{\Out}^\I_{\Sg_4}(\kPG(S_{1,2})),\\
\\
\wt{\Out}^\I_{\Sg_4}(\kPG(S_{1,2}))\cong\Out^\I(\kPG(S_{1,2})),\\
\\
\Out^\I(\kPG(S_{0,5}))\cong\Sigma_5\times\Out^\I_{\Sg_5}(\kPG(S_{0,5})).
\end{array}\] 
Moreover, the natural monomorphism of Lemma~\ref{S_{1,2}} restricts to a monomorphism: 
\[\Aut^\I_{\Sg_4}(\kPG(S_{1,2}))\hookra\Aut^\I_{\Sg_5}(\kPG(S_{0,5})).\]
\end{lemma}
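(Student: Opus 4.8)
The plan is to establish each of the four claimed isomorphisms in Lemma~\ref{GT} by carefully unwinding the definitions of the $\sharp$-groups and combining them with the structural results from \cite{HS} and \cite{HMM}. First I would set up the ambient group $G:=\Aut^\I(\kPG(S_{1,2}))\left/\Inn(\kPG(S_{0,5}))\right.$ and recall that, by the isomorphism $\PG(S_{1,2})\cong\G(S_{0,5})_Q$ of Section~\ref{orientS_{1,2}}, the image of $\Sigma_4$ sits inside $G$ as a normal subgroup with $\Out^\sharp(\kPG(S_{1,2}))$ defined precisely as its centralizer. For the first isomorphism, the key is to show that the extension
\[1\to\Out^\sharp(\kPG(S_{1,2}))\to G\to\Sigma_4\to 1\]
splits and that the two factors commute, realizing $G$ as an internal direct product. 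The splitting should come from the geometric section $\Sigma_4\hookra\G(S_{0,5})_Q/\PG(S_{0,5})$ induced by a finite subgroup of the mapping class group permuting the punctures; the fact that the centralizer meets $\Sigma_4$ trivially (so the product is direct) follows because $\Sigma_4$ acts faithfully and its centralizer is, by definition, disjoint from the nontrivial inner part.

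For the second isomorphism $\Out^\sharp(\kPG(S_{1,2}))\cong\Out^\I(\kPG(S_{1,2}))$, the natural map goes from left to right as an inclusion of centralizer into the full outer group, and I would argue surjectivity by showing that every class in $\Out^\I(\kPG(S_{1,2}))$ already centralizes the image of $\Sigma_4$. This is where Lemma~\ref{S_{1,2}} does the work: since automorphisms in $\Aut^\I(\kPG(S_{1,2}))$ preserve topological types of curves, they preserve the conjugacy class structure that $\Sigma_4$ permutes, which forces them to commute with $\Sigma_4$ modulo inner automorphisms. The third isomorphism $\Out^\I(\kPG(S_{0,5}))\cong\Sigma_5\times\Out^\sharp(\kPG(S_{0,5}))$ is the analogous statement one dimension up, and here I would invoke Corollary~C of \cite{HMM} (used already in the $S_{0,5}$ case) to identify $\Sigma_5$ as a normal subgroup of $\Out^\I(\kPG(S_{0,5}))$ together with the definition of $\Out^\sharp$ as its centralizer, then produce the direct-product splitting exactly as in the $\Sigma_4$ case.

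For the final claim, that the monomorphism of Lemma~\ref{S_{1,2}} restricts to $\Aut^\sharp(\kPG(S_{1,2}))\hookra\Aut^\sharp(\kPG(S_{0,5}))$, the strategy is purely functorial: I already have the restriction monomorphism $r\co\Aut^\I(\kPG(S_{1,2}))\hookra\Aut^\I(\kPG(S_{0,5}))$, so it suffices to check that $r$ carries the $\sharp$-condition to the $\sharp$-condition, i.e.\ that if $f$ centralizes the image of $\Sigma_4$ (modulo $\Inn(\kPG(S_{0,5}))$) then $r(f)$ centralizes the image of $\Sigma_5$ in $\Out^\I(\kPG(S_{0,5}))$. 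One verifies compatibility of the two permutation actions under the inclusion $\kPG(S_{0,5})\hookra\kPG(S_{1,2})$ of Lemma~\ref{squares}, noting that the extra generator of $\Sigma_5$ over $\Sigma_4$ is the transposition moving the distinguished puncture $Q$, whose action is already recorded in the gerbe structure of Section~\ref{orientS_{1,2}}. The main obstacle I anticipate is the second isomorphism: proving that $\Out^\sharp$ exhausts all of $\Out^\I$ in the $S_{1,2}$ case requires genuinely using the topological-type rigidity of Lemma~\ref{S_{1,2}} to rule out outer automorphisms that permute the $\Sigma_4$-factor nontrivially, and one must be careful that the quotient by $\Inn(\kPG(S_{0,5}))$ rather than $\Inn(\kPG(S_{1,2}))$ does not introduce spurious elements, which is precisely why the center-free hypothesis and the explicit inertia-group analysis of Lemma~\ref{factor} are needed.
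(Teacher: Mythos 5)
Your outline has the right shape (identify the symmetric groups as normal subgroups, split off a complement, deduce the remaining isomorphisms), but it is missing the single ingredient that makes the whole lemma work: the \emph{completeness} of $\Sigma_4$ and $\Sigma_5$. The extensions that are actually given are
\[1\to\Sigma_4\to\Aut^\I(\kPG(S_{1,2}))\left/\Inn(\kPG(S_{0,5}))\right.\to\Out^\I(\kPG(S_{1,2}))\to 1\]
and $1\to\Sigma_5\to\Out^\I(\kPG(S_{0,5}))\to\Out^\I(\kPG(S_{0,5}))/\Sigma_5\to 1$, i.e.\ the symmetric group is the \emph{kernel}, not the quotient. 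Since a normal subgroup which is complete is automatically a direct factor, $G\cong K\times C_G(K)$ (Rotman, Theorem~7.15: $C_G(K)\cap K=Z(K)=1$ gives injectivity, and $\Aut(K)=\Inn(K)$ gives $G=K\cdot C_G(K)$), all three displayed isomorphisms drop out at once, with $\Out^\sharp$ equal to the centralizer factor by definition. Your proposal instead posits the extension $1\to\Out^\sharp\to G\to\Sigma_4\to 1$, whose very existence (normality of $\Out^\sharp$ in $G$ and identification of the quotient with $\Sigma_4$) is tantamount to the conclusion; the ``geometric section'' of a surjection onto $\Sigma_4$ is therefore not available as a starting point, and even a genuine splitting of the correct extension would only produce a semidirect product $\Sigma_4\rtimes\Out^\I$ unless you separately show the complement centralizes $\Sigma_4$ --- which is precisely what completeness buys. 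Relatedly, your treatment of the second isomorphism misreads the definition: $\Out^\sharp(\kPG(S_{1,2}))$ is the centralizer of $\Sigma_4$ inside $\Aut^\I(\kPG(S_{1,2}))/\Inn(\kPG(S_{0,5}))$, not a subgroup of $\Out^\I(\kPG(S_{1,2}))=G/\Sigma_4$, so there is no ``inclusion into the full outer group'' whose surjectivity needs a rigidity argument; the isomorphism is simply the restriction of the projection $G\to G/\Sigma_4$ to $C_G(\Sigma_4)$, and it is bijective for the same completeness reasons as above, i.e.\ it is a corollary of the first isomorphism rather than an independent statement.

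For the final claim, your reduction (``if $f$ centralizes the image of $\Sigma_4$ then $r(f)$ centralizes the image of $\Sigma_5$'') is exactly the assertion to be proved, and the discussion of the gerbe structure and the extra transposition does not supply the missing step. What does: writing the image of $r(f)$ in $\Out^\I(\kPG(S_{0,5}))\cong\Sigma_5\times\Out^\sharp(\kPG(S_{0,5}))$ as a pair $(\sigma,\phi)$, the hypothesis forces $\sigma$ to centralize the point stabilizer $\Sigma_4\subset\Sigma_5$, and the centralizer of $\Sigma_4$ in $\Sigma_5$ is trivial, so $\sigma=1$ and the image lands in the $\sharp$-factor. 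Without that one finite group-theoretic fact the argument does not close.
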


\begin{proof}There is a short exact sequence:
\[1\to\Sigma_4\to\Aut^\I(\kPG(S_{1,2}))\left/\Inn(\kPG(S_{0,5}))\right.\to\Out^\I(\kPG(S_{1,2}))\to 1\]

By \cite[Corollary~C]{HMM} (cf.\ also the proof of Lemma~\ref{HosMinMoch}), the image of $\Sg_5$ in $\Out(\kPG(S_{0,5}))$ 
(and so in $\Out^\I(\kPG(S_{0,5}))$) identifies with a normal subgroup. Therefore, there is also a short exact sequence:
\[1\to\Sigma_5\to\Out^\I(\kPG(S_{0,5}))\to\Out^\I(\kPG(S_{0,5}))\left/\Sigma_5\right.\to 1\]

Since $\Sigma_4$ and $\Sigma_5$ are complete groups, the above short exact sequences split (cf.\ \cite[Theorem~7.15]{Rotman}) 
and there are natural isomorphisms as stated in the lemma. The last statement of the lemma follows from the fact that 
the centralizer of $\Sigma_4$ in $\Sigma_5$ is trivial.
\end{proof}


Let $f\in\Aut^\I(\kPG(S_{1,2}))$ be an element such that for some edge $\{v_0,v_1\}\in \kC_P(S_{1,2})_1$, the set of vertices $\{f(v_0),f(v_1)\}$ 
is also an edge of $\kC_P(S_{1,2})$. Let $\td f$ be the image of $f\in\Aut^\I(\kPG(S_{1,2}))$ via the monomorphism 
$\Aut^\I(\kPG(S_{1,2}))\hookra\Aut^\I(\kPG(S_{0,5}))$ of Lemma~\ref{S_{1,2}}. 

The element $\td f$ then acts on the vertex set $\kC_P(S_{0,5})_0$
through the element $f$ and the natural continuous $\kG(S_{1,2})$-equivariant bijection on vertex sets:
 \[q\co\kC_P(S_{1,2})_0\stackrel{\sim}{\to} \kC_P(S_{0,5})_0,\]
 so that, for $v\in\kC_P(S_{1,2})_0$, there holds $\td f(q(v))=q(f(v))$. The key lemma is the following:

\begin{lemma}\label{reductionstep}The element $\td f\in\Aut^\I(\kPG(S_{0,5}))$ is such that for some edge $\{w_0,w_1\}\in \kC_P(S_{0,5})_1$, 
the set of vertices $\{\td f(w_0),\td f(w_1)\}$ is also an edge of $\kC_P(S_{0,5})$.
\end{lemma}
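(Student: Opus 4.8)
The plan is to transport the given edge through the vertex bijection $q$ and then read off the conclusion from the relation $\td f\circ q=q\circ f$. Since, by Lemma~\ref{S_{1,2}}, $f$ preserves topological types of simplices of $\kC(S_{1,2})$, the edge $\{v_0,v_1\}$ and its image $\{f(v_0),f(v_1)\}$ are elementary moves of the same combinatorial type: the shared curves $\gamma:=v_0\cap v_1$ and $f(\gamma)=f(v_0)\cap f(v_1)$ have the same topological type, and the pants decompositions $v_i$, $f(v_i)$ are of the same type. So it suffices to understand, type by type, how $q$ carries edges of $\kC_P(S_{1,2})$ into $\kC_P(S_{0,5})$, recalling that every edge of $\kC_P(S_{1,2})$ lies in a profinite Farey subgraph $\hF_\gamma$ indexed by its shared curve.

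First I would treat the case in which $\gamma$ is separating (i.e.\ bounds a $2$-punctured disc), so that $(S_{1,2})_\gamma\cong S_{1,1}$ and $\hF_\gamma$ is the Farey subgraph whose edges are the moves of geometric intersection $1$, whereas $\hF_{q(\gamma)}\subset\kC_P(S_{0,5})$ is the Farey subgraph of the complementary four-holed sphere, whose edges are the moves of geometric intersection $2$. The Birman--Hilden identification for the double cover $S_{1,1}\to S_{0,4}$ sends intersection-$1$ pairs to intersection-$2$ pairs, hence restricts to an isomorphism of profinite Farey graphs $\hF_\gamma\xrightarrow{\sim}\hF_{q(\gamma)}$ compatible with $q$. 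Thus $q$ sends $\{v_0,v_1\}$ to an edge of $\kC_P(S_{0,5})$; as $f(\gamma)$ is again separating, the same applies to $\{f(v_0),f(v_1)\}$, so $\{q(f(v_0)),q(f(v_1))\}=\{\td f(q(v_0)),\td f(q(v_1))\}$ is an edge, and $w_i:=q(v_i)$ settles this case.

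The genuinely delicate case is $\gamma$ nonseparating, so that $(S_{1,2})_\gamma\cong S_{0,4}$. Here $q$ does \emph{not} respect the Farey structure: of the three Farey classes of curves disjoint from $\gamma$, the two made of curves nonseparating in $S_{1,2}$ are carried by $q$ onto the single class of branch-pair curves disjoint from $q(\gamma)$, so an edge joining two vertices whose varying curves are both nonseparating is sent to a pair at Farey distance $2$ (intersection number $4$) rather than to an edge. To handle this I would complete the given edge to a Farey triangle $\{v_0,v_1,u\}$ inside $\hF_\gamma$, with $u=\{\gamma,s\}$ and $s$ separating; the two new edges $\{v_0,u\}$ and $\{v_1,u\}$ are then of the transporting type, since $q(s)$ is precisely a common Farey neighbour of $q(v_0),q(v_1)$ downstairs. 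Using Lemma~\ref{factor}, which identifies the action of the stabiliser of $\hI_\gamma$ on $\Link(\gamma)\cong\kC((S_{1,2})_\gamma)$ with an action through $\Aut^\I(\kPG(S_{0,4}))$, together with the group-theoretic description of the completion of an edge $\{x,y\}$ as the curve $T_x^{\pm1}(y)$ (so that the completing twist is the conjugate $T_xT_yT_x^{-1}$), I would argue that $f$ sends this triangle to a triangle, whence $f(\{v_0,u\})$ is again a transporting edge; then $w_0:=q(v_0)$, $w_1:=q(u)$ give the required edge.

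The hard part will be exactly this last step. The obstruction is that $\Aut^\I(\kPG(S_{0,4}))$ preserves the conjugacy classes of the procyclic twist subgroups but \emph{not} the geometric intersection number $2$ --- this is the room in which the Galois/Grothendieck--Teichm\"uller action lives --- so a priori $f$ need not carry $T_b,T_{\tilde b}$ to $T_{f(b)},T_{f(\tilde b)}$ on the nose but only up to conjugacy, and hence need not carry the completion $s$ of $\{v_0,v_1\}$ to a completion of $\{f(v_0),f(v_1)\}$. The crux is therefore to show that the single intersection-$2$ datum we are handed, namely that $\{f(v_0),f(v_1)\}$ is again an edge, already forces $f$ to respect the two triangles on that edge; equivalently, that an element of $\Aut^\I(\kPG(S_{0,4}))$ sending one Farey edge to a Farey edge sends its two triangle completions to completions. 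This is where the finer structure of the complementary four-holed sphere, accessed through Lemma~\ref{factor}, has to be used.
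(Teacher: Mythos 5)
Your treatment of the separating case and your diagnosis of where the real difficulty sits are both accurate, and the triangle-completion strategy is a reasonable way to organize the nonseparating case; but the proposal stops exactly at the step that carries all the content. You write that you ``would argue that $f$ sends this triangle to a triangle'' and then, in your final paragraph, correctly observe that this is not automatic, because an element of $\Aut^\I(\kPG(S_{0,4}))$ need only preserve the procyclic twist subgroups up to conjugacy and not geometric intersection numbers --- and you leave that step unproved. This is a genuine gap, not a routine verification: it is precisely the point where the Grothendieck--Teichm\"uller ``room'' you mention must be shut down, and manipulating the curves $T_x^{\pm 1}(y)$ alone will not do it, since the conjugating elements produced by an arbitrary $\I$-automorphism are uncontrolled.

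The paper closes this gap by a global rather than local argument. After reducing, via Lemma~\ref{GT}, to the case $f\in\Aut^\sharp(\kPG(S_{1,2}))_{\hI_\g}$, one checks that the induced element of $\Aut^\I(\kPG(S_{1,2}\ssm\g))$ has outer class commuting with the image of $\Sigma_4$ under the outer representation associated to $1\to\kPG(S_{1,2}\ssm\g)\to\kG(S_{1,2}\ssm\g)\to\Sigma_4\to 1$; Lemma~\ref{Wells} then extends $f$ to an automorphism of the full group $\kG(S_{1,2}\ssm\g)$. Since this group acts transitively on the oriented edges of the Farey graph, the single hypothesis that one edge of $L_\g$ goes to an edge propagates, exactly as in Section~\ref{mainlemma=1}, to the statement that $f$ preserves the \emph{entire} edge set of $L_\g\cong\kC_P(S_{1,2}\ssm\g)$, and in particular sends a transporting edge (such as your $\{v_0,u\}$) to an edge of the same topological type, after which the conclusion follows as in your separating case. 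The missing ingredients are therefore the $\sharp$-reduction, the commutation with $\Sigma_4$, and the Wells extension; without some substitute for them your triangle argument cannot be completed.
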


\begin{proof}By Lemma~\ref{S_{1,2}}, an element $f\in\Aut^\I(\kPG(S_{1,2}))$ preserves $\kG(S_{1,2})$-orbits. Hence,
if the given $\{v_0,v_1\}\in \kC_P(S_{1,2})_1$ is an edge contained in a $\kG(S_{1,2})$-orbit such that the pair of vertices $\{q(v_0),q(v_1)\}$ 
is an edge of $\kC_P(S_{0,5})$, it follows that $\{\td f(q(v_0)),\td f(q(v_1))\}$ is also an edge of $\kC_P(S_{0,5})$. 
In this case, we just let $w_0:=q(v_0)$ and $w_1:=q(v_1)$ and we are done. 
As it is easy to check, in particular, this happens if the common profinite simple closed curve in the intersection $v_0\cap v_1$ has the 
topological type of a separating curve on $S_{1,2}$.

Let us then consider the case when the common profinite simple closed curve $\g$ in $v_0\cap v_1$ has the topological type 
of a nonseparating curve on $S_{1,2}$. As usual, it is not restrictive to assume that $\g\in C(S_{1,2})_0\subset\kC(S_{1,2})_0$. Moreover,
by the first isomorphism of Lemma~\ref{GT}, after, possibly, composing the given $f\in\Aut^\I(\kPG(S_{1,2}))$ with an inner automorphism 
of $\PG(S_{1,2})$, we can also assume that $f\in\Aut^\I_{\Sg_4}(\kPG(S_{1,2}))$. 

From the last statement of Lemma~\ref{GT} and Lemma~\ref{HarbSch}, it then follows that the conjugacy class of 
$\hI_\g\cap\kPG(S_{0,5})$ in $\kPG(S_{0,5})$ is preserved by $\td f$. Therefore, after, possibly, composing by an inner 
automorphism of $\kPG(S_{0,5})$, we can at last assume that $f\in\Aut^\I_{\Sg_4}(\kPG(S_{1,2}))_{\hI_\g}$. 

Since $f$ fixes the vertex $\g$ (cf.\  Remark~\ref{gtrealization}), it also preserves the link $\Link(\g)$ of $\g$ in $\kC(S_{1,2})$ and then the vertex set 
of the profinite subgraph $L_\g$ of $\kC_P(S_{1,2})$. Let us recall (cf.\ Lemma~\ref{linkpants}) that there are natural $\kPG(S_{1,2})_\g$-equivariant 
continuous isomorphisms $\Link(\g)\cong\kC(S_{1,2}\ssm\g)$ and $L_\g\cong\kC_P(S_{1,2}\ssm\g)$. By hypothesis, we then have that both 
$\{v_0,v_1\}$ and $\{f(v_0),f(v_1)\}\in(L_\g)_1$. We claim that this implies that $f$ preserves the edge set of $L_\g$ 
(and so induces an automorphism of this $1$-dimensional simplicial profinite complex). 

By Lemma~\ref{factor}, $f$ acts on the vertex set $(L_\g)_0\cong\kC_P(S_{1,2}\ssm\g)_0$
through its image via the natural homomorphism:
\[R_\g\co\Aut^\I(\kPG(S_{1,2}))_{\hI_\g}\to\Aut^\I(\kPG(S_{1,2}\ssm\g)),\] 
induced by the restriction to the stabilizer $\kPG(S_{1,2})_{\vec\g}$ followed by the projection to its quotient $\kPG(S_{1,2}\ssm\g)$.

 Note that $S_{1,2}\ssm\g\cong S_{0,4}$. Hence, there is a short exact sequence:
\begin{equation}\label{stabext}
1\to\kPG(S_{1,2}\ssm\g)\to\kG(S_{1,2}\ssm\g)\to\Sigma_4\to 1
\end{equation}
and so a faithful representation $\Sigma_4\hookra\Out^\I(\kPG(S_{1,2}\ssm\g))$. Let then $\Out_{\Sg_4}^\I(\kPG(S_{1,2}\ssm\g))$ be
the centralizer of the image of $\Sg_4$ in $\Out^\I(\kPG(S_{1,2}\ssm\g))$ and let $\Aut^\I_{\Sg_4}(\kPG(S_{1,2}\ssm\g))$ be the 
inverse image of $\Out_{\Sg_4}^\I(\kPG(S_{1,2}\ssm\g))$ in $\Aut^\I(\kPG(S_{1,2}\ssm\g))$.

\begin{lemma}\label{imageproj}$R_\g(\Aut^\I_{\Sg_4}(\kPG(S_{1,2}))_{\hI_\g})\subseteq\Aut^\I_{\Sg_4}(\kPG(S_{1,2}\ssm\g))$.
\end{lemma}

\begin{proof}By Lemma~\ref{GT}, by restriction, we get a natural monomorphism:
\[\Aut^\I_{\Sg_4}(\kPG(S_{1,2}))_{\hI_\g}\hookra\Aut^\I_{\Sg_5}(\kPG(S_{0,5})).\]
In order to prove the lemma, we just need to translate this statement in terms of the homomorphism $R_\g$.

Let us observe that $\kPG(S_{1,2})_{\vec\g}\cap\kPG(S_{0,5})=\kPG(S_{0,5})_{q(\g)}$ and that, after identifying $\kPG(S_{0,5}\ssm q(\g))$ 
with $\kPG(S_{0,4})$, the natural projection $\kPG(S_{0,5})_{q(\g)}\to\kPG(S_{0,5}\ssm q(\g))$ identifies
with the restriction of a forgetful homomorphism $\kPG(S_{0,5})\to\kPG(S_{0,4})$ to $\kPG(S_{0,5})_{q(\g)}$.

In \cite[Section~1.2]{HS}, the group $\Aut^\sharp(\kPG(S_{0,n}))$, for $n\geq 4$, was defined to be the inverse image of
the group $\Out^\sharp(\kPG(S_{0,n}))$ by the natural homomorphism $\Aut(\kPG(S_{0,n}))\to\Out(\kPG(S_{0,n}))$.
By Lemma~\ref{HarbSch}, we then have that $\Aut^\sharp(\kPG(S_{0,n}))=\Aut^\I_{\Sg_n}(\kPG(S_{0,n}))$, for $n=4,5$.

In \cite[Section~2.2]{HS}, it is proved that the forgetful homomorphism $\kPG(S_{0,n})\to\kPG(S_{0,n-1})$ induces a
homomorphism $\Aut^\sharp(\kPG(S_{0,n}))\to\Aut^\sharp(\kPG(S_{0,n-1}))$, for $n\geq 5$.

By Lemma~\ref{GT} and the above remarks, the restriction of the homomorphism $R_\g$ to the subgroup 
$\Aut^\I_{\Sg_4}(\kPG(S_{1,2}))_{\hI_\g}$ is then equivalent to the restriction of the homomorphism considered above
$\Aut^\sharp(\kPG(S_{0,5}))\to\Aut^\sharp(\kPG(S_{0,4}))$ (associated to the forgetful homomorphism 
$\kPG(S_{0,5})\to\kPG(S_{0,4})$) 
to the image of $\Aut^\I_{\Sg_4}(\kPG(S_{1,2}))_{\hI_\g}$ in $\Aut^\I_{\Sg_5}(\kPG(S_{0,5}))=\Aut^\sharp(\kPG(S_{0,5}))$. 
This implies the claim of the lemma.
\end{proof}

By Lemma~\ref{imageproj}, the image of $f$ in $\Out^\I(\kPG(S_{1,2}\ssm\g))$ commutes with the image of $\Sigma_4$, so that
all hypotheses of Lemma~\ref{Wells}, applied to the short exact sequence~\eqref{stabext}, are satisfied and $f$ extends to an 
automorphism of $\kG(S_{1,2}\ssm\g)$.  As in Section~\ref{mainlemma=1}, we then conclude that $f$ induces an automorphism 
of the pants complex $\kC_P(S_{1,2}\ssm\g)$, as claimed above.

It is now easy to check that, for some edge $\{v_0',v_1'\}\in(L_\g)_1\cong\kC_P(S_{1,2}\ssm\g)_1$, we have that 
$\{q(v_0'),q(v_1')\}\in\kC_P(S_{0,5})_1$ and conclude, as we did at the beginning of the proof, letting $w_0:=q(v_0')$ and $w_1:=q(v_1')$.
This concludes the proof of Lemma~\ref{reductionstep}.
\end{proof}

From Lemma~\ref{reductionstep} and the case $S=S_{0,5}$ of Theorem~\ref{mainlemma} proved above, we conclude that the image $\td f$ 
of $f$ in $\Aut^\I(\kPG(S_{0,5}))$ is in the image of $\Inn(\kG^\pm(S_{0,5}))$. In conclusion, $\td f$ is an inner automorphism of $\kG^\pm(S_{0,5})$
which normalizes its subgroup $\kPG(S_{1,2})$. 

From \cite[Lemma~9.13]{BF}, it now follows that the normalizer of $\kPG(S_{1,2})$ in
$\kG^\pm(S_{0,5})$ coincides with the closure in this group of the normalizer of $\PG(S_{1,2})$ in $\G^\pm(S_{0,5})$, which, as it easily follows
from \cite[Theorem, (ii)]{Luo}, is just $\PG^\pm(S_{1,2})$. Since $\Inn(\kG^\pm(S_{1,2}))=\Inn(\kPG^\pm(S_{1,2}))$, 
this implies Theorem~\ref{mainlemma} for $S=S_{1,2}$.

\subsection{Proof of Theorem~\ref{mainlemma} for $d(S)>2$} 
We proceed by induction on $d(S)$. Let us then assume that the statement of the lemma holds for all surfaces of modular dimension $<d(S)$.
By the isomorphism~\eqref{basiciso}, it is enough to prove that the action of the given automorphism $f\in\Aut^\I(\kPG(S))$ on the set of vertices of the 
procongruence pants complex $\kC_P(S)$ preserves its set of edges, that is to say, for every edge $\{\alpha_0,\alpha_1\}\in\kC_P(S)_1$,
there holds $\{f(\alpha_0),f(\alpha_1)\}\in\kC_P(S)_1$. 

Let us show that, by an argument similar to the proof of the case $S=S_{0,5}$ of the theorem, it is actually enough to show 
that this is the case for a set of representatives of the $\kPG(S)$-orbits in $\kC_P(S)_1$. 
Thus, assuming that $e=\{w_0,w_1\}\in\kC_P(S)_1$ is such that $f(e)=\{f(w_0),f(w_1)\}\in\kC_P(S)_1$, let us show that, if 
$e'=\{\alpha_0,\alpha_1\}\in\kC_P(S)_1$ has the property that there is an element $x\in\kG(S)$ such that 
$\alpha_i=x\cdot w_i\cdot x^{-1}$, for $i=0,1$, then we also have $f(e')=\{f(\alpha_0),f(\alpha_1)\}\in\kC_P(S)_1$.
Now, for $i=0,1$, we have the identities:
\[f(\alpha_i)=f(x\cdot w_i\cdot x^{-1})=f(x)\cdot f(w_i)\cdot f(x)^{-1},\]
so that $\{f(\alpha_0),f(\alpha_1)\}=\inn f(x)(\{f(w_0),f(w_1)\})\in\kC_P(S)_1$.

By the remark above, in particular, we can assume that $\{\alpha_0,\alpha_1\}\in C_P(S)_1\subset\kC_P(S)_1$.

For $d(S)>2$, the complexes $C_b(S)$ (for $g(S)=0$), $C_{0b}(S)$ (for $g(S)=1$) and $C_0(S)$ (for $g(S)>1$) are connected.
This implies that there is a set $\g_1,\dots,\g_k$ of simple closed curves on $S$, where $\g_i$, for $i=1,\dots,k$, is either nonseparating or
bounds a $2$-punctured disc, such that the edge $\{v_0,v_1\}$ is contained in $L_{\g_1}$, the edge $\{\alpha_0,\alpha_1\}$ is contained in $L_{\g_k}$
and the intersection $L_{\g_i}\cap L_{\g_{i+1}}$, for $1\leq i\leq k-1$, contains at least an edge of $\kC_P(S)$.

The conclusion then follows from a simple induction and the following lemma:

\begin{lemma}\label{edgepreserve}If an automorphism $f\in\Aut^\I(\kPG(S))$ sends an edge of $L_{\g_i}$ to an edge
of $\kC_P(S)$, then it sends every edge of $L_{\g_i}$ to an edge of $\kC_P(S)$, for $i=1,\dots,k$.
\end{lemma}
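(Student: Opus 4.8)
The plan is to reduce the statement to the inductive hypothesis, i.e.\ to Theorem~\ref{mainlemma} for the surface $S_{\g_i}$, which has modular dimension $d(S_{\g_i})=d(S)-1<d(S)$. Fix $\g:=\g_i$, and let $e=\{v_0,v_1\}$ be the edge of $L_\g$ which, by hypothesis, $f$ sends to an edge of $\kC_P(S)$. Recall from Lemma~\ref{linkpants} that $L_\g\cong\kC_P(S_\g)$, where $S_\g$ is connected precisely because $\g$ is either nonseparating or bounds a $2$-punctured disc. The idea is to show that $f$ induces an element $\bar f\in\Aut^\I(\kPG^\pm(S_\g))$ acting on $\kC_P(S_\g)\cong L_\g$ which sends at least one edge to an edge; by the inductive hypothesis $\bar f$ is then inner, hence a simplicial automorphism, so it sends every edge of $\kC_P(S_\g)$ to an edge and we are done.

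The first step is to reduce to the case in which $f$ fixes the procyclic subgroup $\hI_\g$. By Lemma~\ref{faithful}, $f$ acts as a simplicial automorphism of $\kC(S)$, so it sends the link $\Link(\g)\cong\kC(S_\g)$ isomorphically onto $\Link(f(\g))\cong\kC(S_{f(\g)})$. Since the simplicial isomorphism type of $\kC(S_\g)$ determines the topological type of $\g$ (its modular dimension being the dimension of the link, and the join decomposition of the link detecting whether $\g$ is separating), it follows that $f(\g)$ has the same topological type as $\g$, in particular it is again nonseparating or bounding a $2$-punctured disc. By the change of coordinates principle there is then $h\in\G^\pm(S)$ with $h\cdot f(\g)=\g$; post-composing $f$ with $\inn h$ we may assume $f(\g)=\g$, i.e.\ $f\in\Aut^\I(\kPG^\pm(S))_{\hI_\g}$. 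This post-composition is harmless, as $\inn h$ is a simplicial automorphism of $\kC_P(S)$ and hence preserves both the hypothesis and the conclusion of the lemma.

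With this reduction in place, $f$ preserves $\Link(\g)$ and hence the subgraph $L_\g$, so the edge $f(e)$, being an edge of $\kC_P(S)$ lying in $L_\g$, is an edge of $L_\g$. By Lemma~\ref{factor} the action of $f$ on $\Link(\g)\cong\kC(S_\g)$ factors through an element $\bar f\in\Aut^\I(\kPG^\pm(S_\g))$, and, under the identification $L_\g\cong\kC_P(S_\g)$, the edges $e$ and $f(e)$ correspond to edges $\bar e$ and $\bar f(\bar e)$ of $\kC_P(S_\g)$. Thus $\bar f$ satisfies the hypothesis of Theorem~\ref{mainlemma} for $S_\g$; by the inductive hypothesis $\bar f$ lies in the image of $\Inn(\kG^\pm(S_\g))$ and therefore sends every edge of $\kC_P(S_\g)$ to an edge. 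Translating back through $L_\g\cong\kC_P(S_\g)$, the reduced $f$ sends every edge of $L_\g$ to an edge of $\kC_P(S)$, and undoing the harmless inner composition gives the same conclusion for the original $f$.

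The main obstacle is the topological-type preservation used in the second step: one must ensure that $f(\g)$ remains nonseparating or bounding a $2$-punctured disc and, more precisely, lies in the same $\kG^\pm(S)$-orbit as $\g$, so that $S_{f(\g)}\cong S_\g$ and the connected form of the inductive hypothesis genuinely applies. This is the analogue, in arbitrary modular dimension, of the argument in Lemma~\ref{S_{1,2}} distinguishing topological types of $\g$ through the isomorphism type of $\kPG((S_{1,2})_\g)$; here it is cleanest to read the type off the simplicial isomorphism type of $\Link(\g)\cong\kC(S_\g)$, which requires knowing that this profinite complex determines the homeomorphism type of $S_\g$. Everything else is the now-standard combination of Lemma~\ref{factor}, Lemma~\ref{linkpants} and the descent to the link.
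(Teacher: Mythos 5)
Your argument is correct and follows essentially the same route as the paper: reduce, by composing with an inner automorphism of $\kG(S)$, to the case where $f$ stabilizes $\hI_{\g_i}$, then use Lemma~\ref{linkpants} and Lemma~\ref{factor} to descend to an element of $\Aut^\I(\kPG(S_{\g_i}))$ and invoke the inductive case of Theorem~\ref{mainlemma}. The only cosmetic difference is that the paper justifies the first reduction by the already-established fact that the action of $\Aut^\I(\kPG(S))$ on $\kC(S)$ preserves topological types (Theorem~5.5 in \cite{BF}), rather than rederiving this from the simplicial isomorphism type of the link as you sketch; your descent should also land in $\Aut^\I(\kPG(S_{\g_i}))$ rather than $\Aut^\I(\kPG^\pm(S_{\g_i}))$, but this does not affect the argument.
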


\begin{proof}After composing $f\in\Aut^\I(\kPG(S))$ with an element in the image of $\Inn(\kG(S))\to\Aut^\I(\kPG(S))$, we can assume that $f$ 
preserves the procyclic subgroup $\hI_{\g_i}$ and then acts on the vertex set of the subgraph $L_{\g_i}$, which identifies with the set of 
$(d(S)-1)$-simplices of the star of $\g_i$ in $\kC(S)$.
 
By Lemma~\ref{linkpants}, the profinite subgraph $L_{\g_i}$ is naturally isomorphic to $\kC_P(S_{\g_i})$ and, by Lemma~\ref{factor}, this natural 
isomorphism induces an action of $\Aut^\I(\kPG(S))_{\hI_{\g_i}}$ on the vertex set of $\kC_P(S_{\g_i})$ which factors through an element of 
$\Aut^\I(\kPG(S_{\g_i}))$. The induction hypothesis then implies that $f$ preserves the edge set of $L_{\g_i}$, for $i=1,\dots,k$.
\end{proof}

\section{Antiholomorphic involutions}\label{antiholomorphicsection}
\subsection{Centralizers of antiholomorphic involutions}
An \emph{antiholomorphic involution} $\iota\in\G^{\pm}(S)$ is an element of order $2$ (an involution) which reverses the orientation of $S$.
Any such element can be realized as the antiholomorphic involution associated to a real Riemann surface homeomorphic to $S$.

The centralizer of $\iota$ in $\G^{\pm}(S)$ has a simple description. Let $S_\iota:=S/\langle\iota\rangle$ be the quotient surface. 
Let $\Fix(\iota)$ be the fixed-point set of $\iota$. Then, $\Fix(\iota)$ is the union of a (possibly empty) set of disjoint 
simple closed curves on $S$ and the quotient surface $S_\iota$ is orientable if and only if $S\ssm\Fix(\iota)$ is not connected.
Moreover, if $\Fix(\iota)\neq\emptyset$, then $S_\iota$ is a surface with boundary $\dd S_\iota$, which coincides with the image 
of $\Fix(\iota)$ in $S_\iota$ (cf.\ \cite[Proposition~1.2]{Sch}). Let us denote by $\Map(S_\iota)$ the group of isotopy classes 
of self-diffeomorphisms of the (possibly non-orientable) surface $S_\iota$. We then have:

\begin{proposition}\label{centralizer}The centralizer $Z_{\G^{\pm}(S)}(\iota)$ of $\iota$ in $\G^{\pm}(S)$ is described by the short exact sequence:
\[1\to\langle\iota\rangle\to Z_{\G^{\pm}(S)}(\iota)\to\Map(S_\iota)\to 1.\]
\end{proposition}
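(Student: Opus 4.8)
The plan is to build the short exact sequence from the quotient map $\pi\co S\to S_\iota$, by showing that an element of $Z_{\G^{\pm}(S)}(\iota)$ is precisely a mapping class admitting a representative commuting with $\iota$, and that such representatives descend to $S_\iota$. Concretely, I would first record the \emph{symmetric} reformulation: let $\operatorname{Diff}(S,\iota)$ be the group of diffeomorphisms of $S$ commuting with $\iota$, and show that the natural map $\pi_0\operatorname{Diff}(S,\iota)\to Z_{\G^{\pm}(S)}(\iota)$ is an isomorphism. Its image lies in the centralizer because commuting diffeomorphisms have commuting classes; surjectivity is the statement that any $\phi$ with $\phi\iota\phi^{-1}\simeq\iota$ is isotopic to one commuting with $\iota$ on the nose, and injectivity is the statement that two commuting diffeomorphisms which are isotopic are $\iota$-equivariantly isotopic. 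Granting this, a diffeomorphism commuting with $\iota$ descends to $\bar\phi\in\operatorname{Diff}(S_\iota)$, an equivariant isotopy descends to an isotopy of $S_\iota$, and we obtain a well-defined homomorphism $\Phi\co Z_{\G^{\pm}(S)}(\iota)\to\Map(S_\iota)$.

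For surjectivity of $\Phi$ I would use naturality of the orientation double cover. Writing $\mathring S=S\ssm\Fix(\iota)$ and $\mathring S_\iota$ for the interior of $S_\iota$, the restriction $\pi\co\mathring S\to\mathring S_\iota$ is an unbranched double cover with deck group $\langle\iota\rangle$; since $\iota$ reverses orientation, this is exactly the orientation double cover, classified by the first Stiefel--Whitney class $w_1(S_\iota)$ (in the disconnected case $w_1=0$ and $\mathring S$ is two copies of $\mathring S_\iota$). As $w_1$ is a homeomorphism invariant, every $\bar g\in\operatorname{Diff}(S_\iota)$ preserves this cover and therefore lifts to $\mathring g\co\mathring S\to\mathring S$. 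The lift can be chosen to commute with $\iota$: when $\mathring S$ is connected this is automatic, since both $\mathring g$ and $\iota\mathring g\iota$ cover $\bar g$ and hence differ by a deck transformation $d\in\langle\iota\rangle$, with $d=\iota$ forcing $\mathring g\iota=\mathring g$, i.e.\ $\iota=\mathrm{id}$, a contradiction; when $\mathring S$ is disconnected one defines the lift on the two copies of $\mathring S_\iota$ so as to intertwine the swap $\iota$. Extending $\mathring g$ across $\Fix(\iota)$ by the local reflection model of $\iota$ near its fixed curves produces $g\in\operatorname{Diff}(S,\iota)$ with $\Phi([g])=[\bar g]$.

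It remains to identify the kernel. Clearly $\langle\iota\rangle\subseteq\ker\Phi$, since $\iota$ descends to $\mathrm{id}_{S_\iota}$, and $[\iota]\neq[\mathrm{id}]$ because $\iota$ is orientation reversing. Conversely, if $\phi$ commutes with $\iota$ and $\bar\phi\simeq\mathrm{id}_{S_\iota}$, I would lift the descending isotopy through $\pi$ (homotopy lifting for the covering $\mathring S\to\mathring S_\iota$, extended over $\Fix(\iota)$) to an isotopy from $\phi$ to a lift of $\mathrm{id}_{S_\iota}$, i.e.\ to an element of $\{\mathrm{id},\iota\}$. Hence $\ker\Phi=\langle\iota\rangle$, and together with the two previous steps this yields the asserted short exact sequence.

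The main obstacle is the equivariant input underlying the very definition of $\Phi$, namely the isomorphism $\pi_0\operatorname{Diff}(S,\iota)\cong Z_{\G^{\pm}(S)}(\iota)$. For surjectivity one needs that isotopic involutions are conjugate by an ambient isotopy, so that $\phi\iota\phi^{-1}\simeq\iota$ can be upgraded to $\phi'\iota\phi'^{-1}=\iota$ for some $\phi'\simeq\phi$; for injectivity one needs that a symmetric diffeomorphism isotopic to the identity is already equivariantly isotopic to it. Both follow from the Nielsen realization of $\langle\iota\rangle$ as a group of isometries of a hyperbolic structure on $S$ (available since $\chi(S)<0$), together with the uniqueness of finite group actions up to isotopy and the contractibility of the identity component of $\operatorname{Diff}(S)$ (Earle--Eells). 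I would isolate this as the one genuinely nontrivial step and run the covering-space arguments above on top of it.
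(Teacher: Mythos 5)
Your proof is correct and follows essentially the same route as the paper's: in all three cases (free involution, connected fixed-point complement, disconnected fixed-point complement) the heart of the matter is lifting diffeomorphisms of $S_\iota$ through the orientation double cover $S\ssm\Fix(\iota)\to S_\iota\ssm\partial S_\iota$ and checking the lift can be made $\iota$-equivariant. You are in fact more careful than the paper, whose proof only sketches surjectivity onto $\Map(S_\iota)$ and leaves implicit the equivariant foundation $\pi_0\operatorname{Diff}(S,\iota)\cong Z_{\G^{\pm}(S)}(\iota)$ (via Nielsen realization, uniqueness of finite actions up to isotopy, and Earle--Eells) that you correctly isolate as the one genuinely nontrivial step.
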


\begin{proof}If $\Fix(\iota)=\emptyset$, it is enough to observe that the orientation cover $S\to S_\iota$ is canonical. 
This implies that any self-homeomorphism of $S_\iota$ lifts to $S$ and so the proposition follows in this case.

Let us then assume that $\Fix(\iota)\neq\emptyset$ and $S\ssm\Fix(\iota)$ is connected. The surface $S\ssm\Fix(\iota)$ 
identifies with the orientation cover of $S_\iota\ssm\dd S_\iota$, so that every 
self-homeomorphism of $S_\iota\ssm\dd S_\iota$ lifts to $S\ssm\Fix(\iota)$. Since every self-homeomorphism of $S$ which commutes 
with $\iota$ preserves the fixed-point set $\Fix(\iota)$, the conclusion follows.

Let us then consider the case when $S\ssm\Fix(\iota)$ is not connected. In this case, $S\ssm\Fix(\iota)$ has two connected components $S'$ and $S''$ 
such that their closures $\ol{S}'$ and $\ol{S}''$ in $S$ both identify with the quotient surface $S_\iota$. This implies that a self-homeomorphism of $S_\iota$
lifts to a pair of self-homeomorphisms of $\ol{S}'$ and $\ol{S}''$ which are compatible on the boundary and can then be glued to a 
self-homeomorphism of $S$.
\end{proof}

\subsection{The fixed-point set of an antiholomorphic involution in the augmented Teichm\"uller space}
As it is customary, for a space $X$ endowed with an involution $\iota$, we denote by $X^\iota$ its fixed-point set. 
Let then $\cT(S)$ be the Teichm\"uller space associated to the surface $S$ endowed with the Weil--Petersson metric. 
From \cite[Lemma~3.5]{MTopics}, it follows that the fixed-point set $\cT(S)^\iota$ of an antiholomorphic involution $\iota\in\G^{\pm}(S)$ 
is a nonempty and connected real analytic submanifold of $\cT(S)$ of (real) dimension $d(S)$. We have: 

\begin{lemma}\label{fixedpointsetaug}The fixed-point set $\ol{\cT}(S)^\iota$ for the action of $\iota$ on the augmented Teichm\"uller space 
$\ol{\cT}(S)$ is the closure of the fixed-point set $\cT(S)^\iota$ in $\ol{\cT}(S)$. 
\end{lemma}

\begin{proof}The augmented Teichm\"uller space $\ol{\cT}(S)$ is the completion of the Teichm\"uller space $\cT(S)$ with respect to the 
Weil--Petersson metric  (cf.\ Section~\ref{augmented}). The conclusion then follows from the fact that, for $x\in\cT(S)^\iota$ and 
$y\in\dd\ol{\cT}(S)^\iota:=\ol{\cT}(S)^\iota\ssm\cT(S)^\iota$, the unique geodesic connecting $x$ and $y$ (cf.\ the discussion 
preceding \cite[Theorem~5]{Wolpert}) is contained in $\ol{\cT}(S)^\iota$ and intersects $\cT(S)^\iota$ in an open dense subset 
(cf.\ \cite[Theorem~5]{Wolpert}).
\end{proof} 

For an antiholomorphic involution $\iota$, let us denote by $C(S)^{\iota}_k$
the set of $k$-simplices of the abstract simplicial complex $C(S)$ fixed by $\iota$.
By \cite[Theorem~5]{Wolpert} and \cite[Lemma~3.5]{MTopics}, we then have that, for $\s\in C(S)_k$, 
the fixed-point set $\partial\ol{\cT}(S)_\s^\iota$ of the corresponding closed stratum of $\partial\ol{\cT}(S)$ is nonempty 
if and only if $\s\in C(S)_k^\iota$. Moreover, by Lemma~\ref{fixedpointsetaug}, for all $\s\in C(S)_k^\iota$, there holds 
$\partial\ol{\cT}(S)_\s^\iota\cong\ol{\cT}(S\ssm\s)^\iota=\ol{\cT(S\ssm\s)^\iota}$. 
Let us sum up the above discussion in the following proposition:

\begin{proposition}\label{boundaryfixed}The closed irreducible strata of codimension $k+1$ in the boundary of the fixed-point locus $\ol{\cT}(S)^\iota$
are parameterized by the fixed-point set $C(S)_k^\iota$, for $k\geq 0$.
\end{proposition}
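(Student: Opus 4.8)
The plan is to transfer the $\iota$-action through the $\G^{\pm}(S)$-equivariant stratification of $\ol{\cT}(S)$ recorded in Section~\ref{augmented}. First I would note that $\iota\in\G^{\pm}(S)$ acts anticonformally on $\cT(S)$, hence by Weil--Petersson isometries, so that its action extends continuously to the completion $\ol{\cT}(S)$; by Theorem~5 in \cite{Wolpert} the fixed point locus $\ol{\cT}(S)^\iota$ is then the closure in $\ol{\cT}(S)$ of $\cT(S)^\iota$. Since $\G^{\pm}(S)$ permutes the closed strata of $\partial\ol{\cT}(S)$ according to its action on $C(S)$, for every $\s\in C(S)_k$ there holds $\iota\cdot\partial\ol{\cT}(S)_\s=\partial\ol{\cT}(S)_{\iota\s}$, and distinct multicurves label distinct closed strata; in particular $\iota$ preserves $\partial\ol{\cT}(S)_\s$ if and only if $\s\in C(S)^\iota$.

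For $\s\in C(S)_k^\iota$ I would describe the fixed locus explicitly. Fixing $X\in\cT(S)^\iota$ and replacing the components of $\s$ by their geodesic representatives for the hyperbolic metric $X$, the $\iota$-invariance of $\s$ forces this geodesic multicurve to be $\iota$-invariant, so that cutting $S$ along it produces an antiholomorphic involution $\iota_\s$ of the (possibly disconnected) surface $S\ssm\s$. Under the natural isomorphism $\partial\ol{\cT}(S)_\s\cong\ol{\cT}(S\ssm\s)$ the restriction of $\iota$ corresponds to the involution induced by $\iota_\s$, whence $\partial\ol{\cT}(S)_\s^\iota\cong\ol{\cT}(S\ssm\s)^{\iota_\s}$. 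Applying Lemma~3.5 and Corollary~3.8 in \cite{MTopics} to each connected component of $S\ssm\s$ shows that $\cT(S\ssm\s)^{\iota_\s}$ is nonempty of real dimension $d(S\ssm\s)$, and Theorem~5 in \cite{Wolpert} again identifies $\ol{\cT}(S\ssm\s)^{\iota_\s}$ with its closure. As $d(S)-d(S\ssm\s)=k+1$, this exhibits $\partial\ol{\cT}(S)_\s^\iota$ as a nonempty codimension-$(k+1)$ stratum of $\ol{\cT}(S)^\iota$ with generic node multicurve $\s$; in particular distinct $\s\in C(S)_k^\iota$ yield distinct strata.

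For the converse I would rule out non-invariant labels. If $\s\notin C(S)^\iota$, any point of $\ol{\cT}(S)^\iota$ lying on $\partial\ol{\cT}(S)_\s$ is a nodal surface fixed by $\iota$, so $\iota$ permutes its nodes and its node multicurve $\tau\supseteq\s$ satisfies $\iota\tau=\tau$, whence $\tau\supseteq\s\cup\iota\s$. Since $\s\neq\iota\s$, either $\s\cup\iota\s$ is not a multicurve, in which case $\partial\ol{\cT}(S)_\s^\iota=\emptyset$, or $\partial\ol{\cT}(S)_\s^\iota=\partial\ol{\cT}(S)_{\s\cup\iota\s}^\iota$ is strictly deeper, hence of codimension greater than $k+1$. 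Either way $\s$ does not index a codimension-$(k+1)$ stratum, and combining with the previous paragraph the codimension-$(k+1)$ strata of $\partial\ol{\cT}(S)^\iota$ are exactly the $\partial\ol{\cT}(S)_\s^\iota$ with $\s\in C(S)_k^\iota$, which is the assertion.

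The hard part will be the equivariance used in the second paragraph: one must verify that the isomorphism $\partial\ol{\cT}(S)_\s\cong\ol{\cT}(S\ssm\s)$ of Section~\ref{augmented} intertwines the restriction of $\iota$ with the cut involution $\iota_\s$, i.e.\ that the anticonformal structure is compatible with the Fenchel--Nielsen description of the boundary stratum. Granting this, the nonemptiness and the dimension count reduce cleanly to the connected surfaces handled in \cite{MTopics}.
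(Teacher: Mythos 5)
Your argument is essentially the paper's: the proposition is stated there as a summary of the immediately preceding discussion, which invokes exactly the same two inputs (Theorem~5 of \cite{Wolpert} to identify $\ol{\cT}(S)^\iota$ and the boundary fixed loci with closures, and Lemma~3.5/Corollary~3.8 of \cite{MTopics} for nonemptiness and the dimension count on the cut surface). Your write-up merely fills in the details the paper leaves implicit, such as the equivariance of the stratification and the exclusion of non-$\iota$-invariant labels.
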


\begin{remark}\label{recover}Note that, for all $k\geq 0$, the action of the centralizer $Z_{\G^{\pm}(S)}(\iota)$ on $C(S)_k$ preserves the fixed-point 
set $C(S)_k^\iota$ and acts on the latter with a finite number of orbits. 
\end{remark}




\subsection{Antiholomorphic involutions of the profinite mapping class group}
Let us assume that $g(S)\leq 2$. By the congruence subgroup property in genus $\leq 2$, we then have that $\kPG(S)\cong\hPG(S)$ 
and so $\kPG(S)^\pm\cong\hPG^\pm(S)$ and $\kG^{\pm}(S)\cong\hG^{\pm}(S)$.
The augmentation map $\G^{\pm}(S)\to\Z/2$ induces an augmentation map $\hG^{\pm}(S)\to\Z/2$ and we define an 
antiholomorphic involution of $\hG^{\pm}(S)$ to be an element of order $2$ whose image by the augmentation map is nontrivial. 

\begin{proposition}\label{goodgroup}For $g(S)\leq 2$, we have:
\begin{enumerate}
\item $\G^{\pm}(S)$ is a good group, that is to say the natural homomorphism $\G^{\pm}(S)\hookra\hG^\pm(S)$ induces an isomorphism
on (continuous) cohomology with finite coefficients.
\item The natural homomorphism $\G^{\pm}(S)\hookra\hG^\pm(S)$ induces a bijection between the sets of conjugacy classes 
of involutions in the two groups. 
\item For every involution $\iota\in\G^{\pm}(S)\subset\hG^{\pm}(S)$, the centralizer $Z_{\hG^{\pm}(S)}(\iota)$ 
coincides with the closure of $Z_{\G^{\pm}(S)}(\iota)$ in $\hG^{\pm}(S)$.
\end{enumerate}
The same statement holds if we replace $\G^{\pm}(S)$ by $\PG^{\pm}(S)$.
\end{proposition}

\begin{proof}(i): It is well known and easy to prove that $\PG(S)$ is a good group for $g(S)\leq 2$ (cf.\ for instance, the closing remarks in 
\cite{Oda} or \cite[Proposition~8.5]{B1}). The conclusion then follows from the Hochschild--Lyndon--Serre spectral sequence of a group extension.
\smallskip

\noindent
(ii) and (iii): These follow from the first item and \cite[Corollary~B]{BZ}.
\end{proof}

\subsection{The center of the extended procongruence mapping class group}\label{center} 
The following result is an almost immediate consequence of \cite[Theorem~4.14]{BF}:

\begin{proposition}\label{slim}Let $U$ be an open subgroup of $\kG^\pm(S)$. 
\begin{enumerate}
\item For $S\neq S_{0,4}, S_{1,1}, S_{1,2}$ and $S_2$, the centralizer of $U$ in $\kG^\pm(S)$ is trivial.
\item For $S= S_{1,1}, S_{1,2}$ and $S_2$, the centralizer of $U$ in $\kG^\pm(S)$ is generated by the hyperelliptic involution $\u$.
\item For $S= S_{0,4}$, we have $Z(\kG^\pm(S))=\{1\}$ and  if $U\subseteq\kPG^\pm(S)$, then $Z_{\kG^\pm(S)}(U)=K_4$, where 
$K_4\cong\{\pm 1\}\times\{\pm 1\}$ is the Klein subgroup of $\G_{0,[4]}$. 
\end{enumerate}
\end{proposition}

\begin{proof}An antiholomorphic involution $\iota\in\G^\pm(S)$ acts by conjugation on Dehn twist powers $\tau_\g^k\in\PG(S)$, for $k\in\N^+$,
by the formula $\iota\tau_\g^k\iota=\tau_{\iota(\g)}^{-k}$. This implies that the outer representation associated to the short exact sequence
$1\to\kG(S)\to\kG(S)^\pm\to\Z/2\to 1$ is nontrivial when restricted to the subgroup $U$. Together with \cite[Theorem~4.14]{BF}, this implies
all claims of the proposition.
\end{proof}

\subsection{The fixed-point set of an antiholomorphic involution in the procongruence moduli stack}\label{fpset} 
The main result of this section is Proposition \ref{boundaryfixedpro}, which is a key ingredient for the proof of the  genus zero case of Theorem~\ref{autpants} 
(cf.\ Section~\ref{genus0case}). 

For every torsion-free characteristic level $\G^\l$ of $\G(S)$ and an antiholomorphic involution $\iota\in\G^{\pm}(S)$, 
let $(\cM(S)^\l,\iota)$ be the real complex manifold with equivariant 
fundamental group isomorphic to $\G^\l\cdot\langle\iota\rangle$ (cf.\ \cite[Section~3]{Huisman}). 
Let $\bM(S)^\l$ be the DM compactification of $\cM(S)^\l$. The action of $\iota$ on $\cM(S)^\l$ extends to $\bM(S)^\l$, which then acquires 
a structure of real complex analytic variety. 

Let us recall that we defined $\ol{\M}(S)$ to be the inverse limit of all compactified geometric level
structure over $\bM(S)$ and that there is a natural embedding $\ol{\cT}(S)\hookra\ol{\M}(S)$ (cf.\ Section~\ref{preliminaryresults}). 
Therefore, the action of $\iota$ on the augmented Teichm\"uller space $\ol{\cT}(S)$ extends to $\ol{\M}(S)$ (and, in particular, to $\M(S)$). 
We have:

\begin{lemma}\label{density}The fixed-point set $\ol{\M}(S)^\iota$  is the inverse limit of an inverse system of real analytic manifolds of 
dimension $d(S)$ and contains ${\M}(S)^\iota$ as an open dense subspace. 
\end{lemma}

\begin{proof}For the first claim, by the definitions involved, it is enough to prove that, for a cofinal system of congruence levels $\{\G^\l\}_{\l\in\L'}$ 
of $\G(S)$, the fixed-point set  $(\bM(S)^\l)^\iota$ is smooth of real dimension $d(S)$. From \cite[Theorem~3.11]{sym}, it follows that, at least, 
there is such a cofinal system with the property that the associated compactified level structures $\bM(S)^\l$ are smooth complex manifolds 
for all $\l\in\L'$. Since, as we observed above, the fixed-point set $\cT(S)^\iota$ is nonempty, we also know that the fixed-point set $(\bM(S)^\l)^\iota$ 
is nonempty. This implies (cf.\ \cite[Section~3.1]{MTopics}) that the fixed-point set $(\bM(S)^\l)^\iota$
is a smooth real analytic subspace of $\bM(S)^\l$ of (real) dimension $d(S)$, for all $\l\in\L'$. The claim then follows.

For the second claim, it is enough to prove that, for every characteristic level $\G^\l$ of $\G(S)$, the fixed-point set 
$(\cM(S)^\l)^\iota$ is dense in the fixed-point set  $(\bM(S)^\l)^\iota$. Every irreducible
component (as a real analytic variety) of the fixed-point set $(\bM(S)^\l)^\iota$ is the image of the fixed-point set $\ol{\cT}(S)^{\iota'}$ in the quotient 
$\bM(S)^\l=\ol{\cT}(S)/\G^\l$, for some antiholomorphic involution $\iota'$ such that $\iota'\equiv\iota\mod\G^\l$.
Hence, the conclusion follows from Lemma~\ref{fixedpointsetaug}.
\end{proof}

We then have:

\begin{proposition}\label{procentralizer}For $g(S)\leq 2$, the fixed-point locus $\ol{\M}(S)^\iota$ of an antiholomorphic involution 
$\iota\in\G^{\pm}(S)\subset\hG^{\pm}(S)$ is irreducible and contains ${\M}(S)^\iota$ as an open dense subspace. 
\end{proposition}

\begin{proof}By \cite[Theorem~3.6]{MTopics}, there is a natural bijection between the set of connected components 
of the real locus of $(\cM(S)^\l,\iota)$ and conjugacy classes of involutions in $\G^\l\cdot\langle\iota\rangle$. 

By (i) of Proposition~\ref{goodgroup} and Shapiro's lemma, $\G^\l\cdot\langle\iota\rangle$ is also good for $g(S)\leq 2$.
By \cite[Corollary~B, (i)]{BZ} and the above remarks, there is then a bijective correspondence between the set of connected 
components of the real locus of $(\cM(S)^\l,\iota)$ and conjugacy classes of involutions in $\hG^\l\cdot\langle\iota\rangle$. 
By passing to the inverse limit over all such level structures, since $\cap_{\l\in\Lambda}\hG^\l\cdot\langle\iota\rangle=\langle\iota\rangle$, 
we see that the fixed-point  locus $\M(S)^\iota$ is connected and smooth (and so also irreducible). The proposition then follows from
Lemma~\ref{density}.
\end{proof}

The closed boundary strata of $\ol{\M}(S)$ are parameterized in a $\kG^\pm(S)$-equivariant way by the simplices of the 
procongruence curve complex $\kC(S)$ and, for $\s\in C(S)\subset\kC(S)$, there holds $\dd\ol{\M}(S)_\s\cong\ol{\M}(S\ssm\s)$, 
where we denote by $\dd\ol{\M}(S)_\s$ the closed stratum of $\dd\ol{\M}(S)=\ol{\M}(S)\ssm\M(S)$ parameterized by a simplex $\s\in\kC(S)$.  

For an antiholomorphic involution $\iota$, let us denote by $\kC(S)^{\iota}_k$
the set of $k$-simplices of the simplicial profinite complex $\kC(S)$ fixed by $\iota$.
From Proposition~\ref{boundaryfixed} and Proposition~\ref{procentralizer}, it then follows:

\begin{proposition}\label{boundaryfixedpro}For $g(S)\leq 2$, the closed irreducible strata of the DM boundary of the fixed-point locus 
$\ol{\M}(S)^\iota$ of codimension $k+1$ are parameterized by the fixed-point set $\kC(S)_k^\iota$, for all $k\geq 0$. Moreover, $\kC(S)_k^\iota$ is 
the closure of the fixed-point set $C(S)_k^\iota$ in the profinite set $\kC(S)_k$.
\end{proposition}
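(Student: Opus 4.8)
\textbf{Proof strategy for Proposition~\ref{boundaryfixedpro}.}
The plan is to transfer the finite-level description of the real locus, already established in the proof of Proposition~\ref{procentralizer}, down to each boundary stratum and then pass to the inverse limit. The key geometric input is the stratum identification $\dd\ol{\M}(S)_\s\cong\ol{\M}(S\ssm\s)$ for $\s\in C(S)\subset\kC(S)$, which is compatible with the action of $\iota$ whenever $\iota$ fixes $\s$. Concretely, for a simplex $\s\in\kC(S)_k$, the closed stratum $\dd\ol{\M}(S)_\s$ meets the fixed point locus $\ol{\M}(S)^\iota$ nontrivially precisely when $\iota$ preserves the stratum, which by the parametrization of strata by $\kC(S)$ means exactly that $\s\in\kC(S)_k^\iota$. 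When this holds, the induced involution on $\dd\ol{\M}(S)_\s\cong\ol{\M}(S\ssm\s)$ is again an antiholomorphic involution, so Proposition~\ref{procentralizer} applies to $\ol{\M}(S\ssm\s)$ and tells us that its fixed locus is smooth and irreducible, hence yields a single boundary stratum of $\ol{\M}(S)^\iota$ of the expected codimension $k+1$.

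First I would make precise the claim that the boundary strata of $\ol{\M}(S)^\iota$ are genuinely parametrized by $\kC(S)_k^\iota$ and not merely indexed by it. This uses the fact, from Proposition~\ref{procentralizer}, that $\ol{\M}(S)^\iota$ is the closure of $\M(S)^\iota$ and is irreducible; intersecting with the stratification of $\ol{\M}(S)$ by the procongruence curve complex, the nonempty intersections are exactly those indexed by $\iota$-fixed simplices, and each such intersection is itself irreducible by the stratum-wise application of Proposition~\ref{procentralizer} described above. This is the complete analogue, at the procongruence level, of Proposition~\ref{boundaryfixed}, which gives the corresponding statement for $\ol{\cT}(S)^\iota$ with $C(S)_k^\iota$ in place of $\kC(S)_k^\iota$.

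The second assertion, that $\kC(S)_k^\iota$ is the closure of $C(S)_k^\iota$ inside the profinite set $\kC(S)_k$, is the point requiring the most care and is where I expect the main obstacle to lie. The subtlety is that taking fixed points does not automatically commute with the inverse limit defining $\kC(S)_k=\varprojlim_\l (C(S)_k/\G^\l)$. The natural approach is to argue level by level: by the good-group property for $g(S)\leq 2$ together with Corollary~3.5 in \cite{MZ}, the conjugacy classes of the involution $\iota$ in each $\hG^\l\cdot\langle\iota\rangle$ match those in $\G^\l\cdot\langle\iota\rangle$, so at each finite level the $\iota$-fixed simplices of the quotient complex come from genuine topological fixed simplices. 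One must then check that the transition maps in the inverse system are surjective on fixed-point sets, which is what guarantees that the inverse limit of the finite fixed-point sets has dense image equal to $\kC(S)_k^\iota$ and contains $C(S)_k^\iota$ as a dense subset. The combination $\cap_{\l}\hG^\l\cdot\langle\iota\rangle=\langle\iota\rangle$, already exploited in the proof of Proposition~\ref{procentralizer}, is exactly the ingredient that forces the closure of $C(S)_k^\iota$ to fill out all of $\kC(S)_k^\iota$ with no spurious extra points appearing only in the limit.

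Assembling these pieces, I would conclude by stating that Proposition~\ref{boundaryfixed} provides the topological model, Proposition~\ref{procentralizer} provides the smoothness and irreducibility of each fixed stratum at every finite level, and the density argument of the previous paragraph identifies $\kC(S)_k^\iota$ with the closure of $C(S)_k^\iota$; together these give both sentences of the proposition. The only genuinely new work beyond citing the two earlier propositions is the commutation-with-inverse-limit verification, so that is where a careful proof would spend its effort.
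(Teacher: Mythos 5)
Your proposal rests on the same two ingredients as the paper's proof --- Proposition~\ref{boundaryfixed} as the topological model and the density statement coming out of the proof of Proposition~\ref{procentralizer} --- but it assembles them in the opposite order, and that ordering hides a gap in your first step. The paper's argument is a direct deduction: since $\M(S)^\iota$ is the closure of the image of $\cT(S)^\iota$ (this is exactly what the level-by-level matching of conjugacy classes of involutions via goodness and Corollary~3.5 of \cite{MZ} delivers), the same holds for $\ol{\M}(S)^\iota$, hence $\dd\ol{\M}(S)^\iota$ is the closure of the image of $\dd\ol{\cT}(S)^\iota$ in $\dd\ol{\M}(S)$, and both claims of the proposition then fall out of Proposition~\ref{boundaryfixed} at once. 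By contrast, your stratum-by-stratum opening move --- ``$\dd\ol{\M}(S)_\s$ meets $\ol{\M}(S)^\iota$ precisely when $\s\in\kC(S)_k^\iota$, and the induced involution on $\ol{\M}(S\ssm\s)$ is antiholomorphic, so Proposition~\ref{procentralizer} applies'' --- only parses for $\s\in C(S)_k^\iota$: for a general profinite simplex there is no surface $S\ssm\s$, the identification $\dd\ol{\M}(S)_\s\cong\ol{\M}(S\ssm\s)$ is only available for topological simplices, and in any case an involution can preserve a stratum without fixing any of its points, so ``$\iota$ preserves the stratum'' does not by itself yield nonemptiness of the fixed locus there. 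To handle the profinite simplices you already need your second assertion (that $\kC(S)_k^\iota$ is the closure of $C(S)_k^\iota$), which you only establish afterwards; your treatment of that assertion is essentially the paper's argument and is sound. If you prove the single closure statement for $\ol{\M}(S)^\iota$ first, as the paper does, the stratum parameterization follows immediately and the stratum-wise irreducibility discussion becomes unnecessary.
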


\begin{proof}Since the closed irreducible strata of the DM boundary $\dd\ol{\M}(S)$ of codimension $k+1$ are $\kG^\pm(S)$-equivariantly 
parameterized by $\kC(S)_k$, the closed irreducible stratum $\dd\ol{\M}(S)^\iota_\s$, parameterized by $\s\in\kC(S)_k$,
is preserved by $\iota$ if and only if $\s\in\kC(S)_k^\iota$ and so, in particular, $\iota\in\kG^\pm(S)_\s$. As we observed above,
$\dd\ol{\M}(S)_\s\cong\ol{\M}(S\ssm\s)$ and $\kG^\pm(S)_\s$ acts on it through its quotient $\kG^\pm(S\ssm\s)$.
The image of $\iota$ in $\G^\pm(S\ssm\s)$ is then also a antiholomorphic involution of this group, which already implies that 
$\dd\ol{\M}(S)_\s^\iota\cong\ol{\M}(S\ssm\s)^\iota$ is nonempty. By Proposition~\ref{procentralizer}, the fixed-point locus is also irreducible.
This implies the first claim of the proposition. The last claim follows from the fact that the boundary $\dd\ol{\M}(S)^\iota=\ol{\M}(S)^\iota\ssm{\M}(S)^\iota$ 
is the closure of the boundary $\dd\ol{\cT}(S)^\iota$ in the boundary $\dd\ol{\M}(S)$, which also follows from
Proposition~\ref{procentralizer}.
\end{proof}

\subsection{Exceptional automorphisms of $\G^\pm(S)$ in low genus}\label{exAut}
\begin{lemma}\label{firstcoh}For $S=S_{1,1},S_{1,2}$ or $S_2$, there holds:
\[H^1(\G^\pm(S)/Z(\G(S)),\Z/2)=\langle\xi\rangle\oplus\langle\zeta\rangle,\] 
where the epimorphism $\zeta\co\G^\pm(S)/Z(\G(S))\to\Z/2$ is induced by the orientation character of $\G^\pm(S)$ and 
the restriction of $\xi\co\G^\pm(S)/Z(\G(S))\to\Z/2$ to $\G(S)/Z(\G(S))$ is nontrivial.
\end{lemma}

\begin{proof}The center $Z(\G(S))=Z(\G^\pm(S))$ is generated by the hyperelliptic involution $\u$ and the quotient 
$\G(S)_{/\u}:=\G(S)/Z(\G(S))$ (resp.\ $\G(S)^\pm_{/\u}:=\G(S)^\pm/Z(\G(S))$) identifies with the (resp.\ extended) mapping class group of the 
quotient surface $S/\langle\u\rangle$ marked by the set $B$ of branch points of the natural map $S\to S/\langle\u\rangle$. Note that,
for $S=S_{1,1},S_{1,2}$ and $S_2$, we have $|B|=3,4$ and $6$, respectively.
There are then natural epimorphisms $\G(S)_{/\u}\to\Sigma_B$
and $\G(S)^\pm_{/\u}\to\Sigma_B$, where $\Sigma_B$ is the symmetric group on the set of points $B$.

The inflation-restriction exact sequence associated to the natural short exact sequence
$1\to\G(S)_{/\u}\to\G(S)^\pm_{/\u}\to\Z/2\to 1$ determines the exact sequence:
\[0\to\langle\zeta\rangle\to H^1(\G(S)^\pm_{/\u}),\Z/2)\to H^1(\G(S)_{/\u},\Z/2).\]

From the results in \cite[Section~5.1.3]{FM} (essentially, because $\G(S)_{/\u}$ is normally generated by a Dehn twist), 
it follows that $H^1(\G(S)_{/\u},\Z/2)=\langle\xi\rangle$. In order to prove the lemma, we then only need to show that the cohomology
group $H^1(\G(S)^\pm_{/\u}),\Z/2)$ contains an element whose restriction to $\G(S)_{/\u}$ is nontrivial.

For this, let us observe that the epimorphisms $\G(S)^\pm_{/\u}\to\Sigma_B$ and $\G(S)_{/\u}\to\Sigma_B$ 
induce on first cohomology groups the monomorphisms  $H^1(\Sigma_B,\Z/2)\hookra H^1(\G(S)^\pm_{/\u}),\Z/2)$ and 
$H^1(\Sigma_B,\Z/2)\hookra H^1(\G(S)_{/\u}),\Z/2)$. Since $H^1(\Sigma_B,\Z/2)\cong\Z/2$, the conclusion follows.
\end{proof} 

By Proposition~\ref{slim}, there holds $Z(\hG^\pm(S))=Z(\hG(S))=Z(\G(S))=\langle\u\rangle$, for $S=S_{1,1},S_{1,2}$ or $S_2$.
Hence, with the notations of Lemma~\ref{firstcoh}, we have that, for $S=S_{1,1},S_{1,2}$ or $S_2$:
\begin{equation}\label{cohomcalculation}
H^1(\hG^\pm(S)/Z(\hG(S)),Z(\hG(S)))\cong H^1(\G^\pm(S)/Z(\G(S)),Z(\G(S)))=\langle\xi\rangle\oplus\langle\zeta\rangle,
\end{equation}
where the restriction of the cocycle $\xi$ to $\hG(S)/Z(\G(S))$ is nontrivial while the restriction of the cocycle $\zeta$ to $\hG(S)/Z(\G(S))$ is trivial.

The epimorphisms $\xi$ and $\zeta$ then determine two automorphisms $\exp\xi$ and $\exp\zeta$ of $\hG^\pm(S)$ (resp.\ $\G^\pm(S)$), 
defined by the assignments $x\mapsto x\cdot\xi(\bar x)$ and $x\mapsto x\cdot\zeta(\bar x)$, where $\bar x$ denotes the image of $x$ in the quotient 
$\hG^\pm(S)/Z(\hG(S))$ (resp.\ $\G^\pm(S)/Z(\G(S))$) (cf.\ the proof of \cite[Lemma~3.5]{BF}). Note that, by definition, both these automorphisms are not inner.

For the case $S=S_{1,2}$, we then have:

\begin{proposition}\label{(g,n)=(1,2)}For $S=S_{1,2}$, there are natural isomorphisms:
\begin{enumerate}
\item $\Aut(\hG^\pm(S))\cong\Aut(\hPG^\pm(S))\times\langle\exp\xi\rangle\times\langle\exp\zeta\rangle$;
\item $\Aut^\I(\hG^\pm(S))\cong\Aut^\I(\hPG^\pm(S))\times\langle\exp\zeta\rangle$.
\end{enumerate}
\end{proposition}

\begin{proof}We have that $\hG^{\pm}(S)=\hPG^{\pm}(S)\times\langle\u\rangle$, 
because $\u$ determines a splitting of the short exact sequence $1\to \hPG^{\pm}(S)\to \hG^{\pm}(S) \to \Z/2\to 1$.

By the Wells exact sequence associated to the extension $1\to Z(\hG(S))\to\hG(S)\to\hPG^{\pm}(S)\to 1$ and the above identity, since
$\Aut(Z(\hG(S)))=\Aut(\Z/2)=\{1\}$, there is a split short exact sequence (cf.\ \cite[Lemma~7.4]{BF}):
\[0\to H^1(\hG^\pm(S)/Z(\hG(S)),Z(\hG(S)))\to\Aut(\hG^\pm(S))\to\Aut(\hPG^\pm(S))\to 1.\]
The first item of the proposition then follows from \eqref{cohomcalculation}.

Since the quotient $\hG(S)/Z(\hG(S))$ is topologically normally generated 
by the image of a nonseparating Dehn twist $\tau_\g$ of $\G(S)$, there holds $\xi(\bar\tau_\g)=\u$, so that $\exp\xi\notin\Aut^\I(\hG^\pm(S))$.
Instead, since the restriction of $\zeta$ to $\hG(S)/Z(\hG(S))$ is trivial, we have $\exp\zeta\in\Aut^\I(\hG^\pm(S))$. 
The second item of the proposition then also follows.
\end{proof}

In the topological case, for $S=S_{1,1}, S_{1,2}$ or $S_2$, from the isomorphisms~\eqref{nonoriso1}, 
Lemma~\ref{firstcoh} and \cite[Lemma~3.5]{BF}, it follows that we have the isomorphisms
(cf.\ also \cite[Theorem~1]{McCarthy} for the case $S=S_2$):
\begin{equation}\label{nonoriso3}
\Aut(\G^\pm(S))\cong\Aut^\I(\G^\pm(S))\times\langle\exp\xi\rangle\hspace{0.5cm}\mbox{and}\hspace{0.5cm}
\Aut^\I(\G^\pm(S))=\Aut^\I(\G(S))\times\langle\exp\zeta\rangle.
\end{equation}

\section{Proof of Theorem~\ref{autpants}}
\subsection{Preliminary lemmas}
Before we proceed to the proof of Theorem~\ref{autpants}, we need to prove first a series of lemmas. The following definition will be also useful:

\begin{definition}\label{Ichar}We say that a subgroup of $\kG^{\pm}(S)$ is \emph{$\I$-characteristic} if it is preserved by all elements 
of $\Aut^\I(\kG^{\pm}(S))$.
\end{definition} 

\begin{lemma}\label{autcurvecomplex}For $S=S_{g,n}$ such that either $g\geq 1$ and $(g,n)\neq(1,2)$ or $g=0$ and $n\geq 5$, there is 
an epimorphism $\Aut^\I(\kG^\pm(S))\to\Sigma_n$, such that its composition with the homomorphism $\inn\co\kG^\pm(S)\to\Aut^\I(\kG(S))$ 
is the natural epimorphism $\kG^\pm(S)\to\Sigma_n$.
\end{lemma}

\begin{proof}We will first deal with the case $g\geq 1$ and $(g,n)\neq(1,2)$. We will then explain how to adapt the proof 
to the case $g=0$ and $n\geq 5$.

Let us fix a set of labels $B$ for the punctures on $S$. 
For a $1$-simplex $\{\g_0,\g_1\}\in C(S)$ such that the two curves $\g_0,\g_1$ bound a $1$-punctured annulus on $S$, we define the \emph{marked}
topological type of $\{\g_0,\g_1\}$ as the data of the topological type of $\{\g_0,\g_1\}$ plus the label $b\in B$ which marks the puncture on the annulus
with boundary $\g_0\cup\g_1$. Note that the hypothesis $S\neq S_{1,2}$ makes sure that this is well defined. 

There are $n$ marked topological types of such simplices and they are in bijective correspondence with the $\PG(S)$-orbits of $1$-simplices
$\{\g_0,\g_1\}$ of the given topological type. The action of $\G^\pm(S)$ on $C(S)$ then permutes these orbits and the natural epimorphism 
$\G^\pm(S)\to\Sigma_n$ can be recovered from this action.

For a $1$-simplex $\{\g_0,\g_1\}\in \kC(S)$ with the topological type of a $1$-punctured annulus on $S$ described above, we then define its 
\emph{marked} topological type as the marked topological type of any $1$-simplex $\{\g_0',\g_1'\}\in C(S)$ in the $\kPG(S)$-orbit of $\{\g_0,\g_1\}$.  
Since there holds $\kC(S)_1/\kPG(S)=C(S)_1/\PG(S)$, this is well defined. 

By \cite[Theorem~5.5]{BF}, the natural representation $\Aut^\I(\kG^\pm(S))\to\Aut(\kC(S))$ preserves the topological types of simplices 
in $\kC(S)$. This determines an action on marked topological types of $1$-simplices $\s\in\kC(S)$ with the topological type 
of a $1$-punctured annulus on $S$ if and only if, $\s'=f\cdot\s$, for some $f\in\kPG(S)$, implies that, for every 
$\phi\in\Aut^\I(\kG^\pm(S))$, there exists an $f'\in\kPG(S)$ such that $\phi\cdot\s'=f'\cdot\phi\cdot\s$.

The pure mapping class group $\PG(S)$ is the subgroup of the extended mapping class group $\G^{\pm}(S)$ generated by Dehn twists.
Hence, $\kPG(S)$ is an $\I$-characteristic subgroup of $\kG^{\pm}(S)$ in the sense of Definition~\ref{Ichar}. 
This implies that $\Inn(\kPG(S))$ is a normal subgroup of $\Aut^\I(\kG^{\pm}(S))$. Therefore, the natural representation 
$\Aut^\I(\kG^\pm(S))\to\Aut(\kC(S))$ satisfies the property stated above. The action of $\Aut^\I(\kG^\pm(S))$ on marked topological types of $1$-simplices 
of the topological type of a $1$-punctured annulus then determines a representation $\Aut^\I(\kG^\pm(S))\to\Sigma_n$ with the desired properties.

For $g=0$ and $n\geq 5$, we consider instead $1$-simplices $\{\g_0,\g_1\}\in C(S)$ such that the curves $\g_0,\g_1$ together bound a 
$1$-punctured annulus and, moreover, one of the two curves bounds a $2$-punctured disc on $S$. This is a well defined topological type 
of $1$-simplices for $n\geq 5$ which we then mark by labeling the puncture on the annulus with boundary $\g_0\cup\g_1$.
We can then proceed exactly as in the case $g\geq 1$ and $(g,n)\neq(1,2)$ treated above.
\end{proof}

\begin{lemma}\label{Icharacteristic}The group $\kPG(S)$ is an $\I$-characteristic subgroup of $\kPG^{\pm}(S)$
and, for $d(S)>1$ and $S\neq S_{1,2}$, $\kPG^{\pm}(S)$ is an $\I$-characteristic subgroup of $\kG^{\pm}(S)$.
\end{lemma}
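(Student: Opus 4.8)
The plan is to deduce both statements from the single fact that $\kPG(S)$ is topologically generated by profinite Dehn twists. Since the pure mapping class group $\PG(S)$ is generated by Dehn twists, its procongruence completion $\kPG(S)$ is exactly the closed subgroup of $\kPG^{\pm}(S)$ (and of $\kG^{\pm}(S)$) topologically generated by the set of profinite Dehn twists. Now take $\phi\in\Aut^\I(\kPG^{\pm}(S))$, respectively $\phi\in\Aut^\I(\kG^{\pm}(S))$. By definition $\phi$ carries each procyclic subgroup $\overline{\langle\tau\rangle}$ generated by a profinite Dehn twist $\tau$ to a conjugate of some $\overline{\langle\tau'\rangle}$ with $\tau'$ again a profinite Dehn twist; as every profinite Dehn twist and all of its conjugates lie in $\kPG(S)$, so does the procyclic closure $\overline{\langle\tau'\rangle}$, whence $\phi(\tau)\in\kPG(S)$. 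Therefore $\phi(\kPG(S))\subseteq\kPG(S)$, and the same applied to $\phi^{-1}$ gives $\phi(\kPG(S))=\kPG(S)$. This proves the first statement and, simultaneously, shows that $\kPG(S)$ is $\I$-characteristic in $\kG^{\pm}(S)$, which is exactly what the second statement will use.

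For the second statement I would pass to the quotient $Q:=\kG^{\pm}(S)/\kPG(S)$; this makes sense as an ambient group for the induced map $\bar\phi$ precisely because $\kPG(S)$ is $\I$-characteristic in $\kG^{\pm}(S)$. The image of $\kG(S)$ in $Q$ is $\Sigma_n$, and a pure antiholomorphic involution $\iota$ (orientation-reversing, fixing every puncture) maps to an element $\bar\iota$ of order $2$ which is central in $Q$: conjugation by $\iota$ preserves orientation type and acts trivially on the set of punctures, so it fixes the puncture-permutation class of every element. Hence $Q\cong\Sigma_n\times\Z/2$, with the central $\Z/2$ equal to $\kPG^{\pm}(S)/\kPG(S)$. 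For $n\geq 3$ one has $Z(\Sigma_n)=1$, so $Z(Q)=\Z/2=\kPG^{\pm}(S)/\kPG(S)$; as any automorphism of $Q$ preserves its center, $\bar\phi$ preserves $\kPG^{\pm}(S)/\kPG(S)$, and lifting this gives $\phi(\kPG^{\pm}(S))=\kPG^{\pm}(S)$. The cases $n\leq 1$ are vacuous, since then $\kPG^{\pm}(S)=\kG^{\pm}(S)$.

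The hard part will be the case $n=2$ (for instance $S_{1,2}$, $S_{2,2}$, and so on), where $Q\cong(\Z/2)^2$ is abelian and the center no longer singles out $\kPG^{\pm}(S)/\kPG(S)$: a priori $\bar\phi$ could interchange the pure orientation-reversing class with a puncture-swapping class. What must really be shown here is that $\phi$ detects \emph{purity}, i.e. the triviality of the puncture-permutation; and this cannot be extracted from the $\I$-condition alone, which only records the procyclic subgroups $\overline{\langle\tau\rangle}$ and is therefore insensitive to the inversion of Dehn twists that distinguishes orientation. My plan would be to separate the three index-two overgroups of $\kPG(S)$ in $\kG^{\pm}(S)$ using the finer action of $\phi$ on $\kC(S)$ together with the structure of square roots of twists about a curve $\gamma$ bounding a twice-punctured disc, a transposition of the two punctures being realized by a half-twist $h$ with $h^2=\tau_\gamma$; alternatively, in the low-genus situations, to invoke the special isomorphisms of Section~\ref{orientS_{1,2}} in order to reduce to a genus-zero surface with at least five punctures, where the center argument above already applies.
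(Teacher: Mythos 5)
Your first paragraph is correct and is essentially the paper's own argument for the first assertion: $\kPG(S)$ is the closed subgroup topologically generated by the profinite Dehn twists, the $\I$-condition forces each procyclic subgroup $\hI_\g$ into a conjugate of another such subgroup, and normality of $\kPG(S)$ closes the loop; the same reasoning shows $\kPG(S)$ is $\I$-characteristic in $\kG^{\pm}(S)$. Your quotient argument for the second assertion is a genuinely different route from the paper's and is complete and clean for $n(S)\geq 3$ (and vacuous for $n(S)\leq 1$): identifying $\kG^{\pm}(S)/\kPG(S)$ with $\Sigma_n\times\Z/2$ and recognizing $\kPG^{\pm}(S)/\kPG(S)$ as its center uses nothing beyond the first assertion. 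But the case $n(S)=2$ that you leave open is a genuine gap, not a loose end: the lemma is claimed for every hyperbolic surface, the surfaces $S_{g,2}$ with $g\geq 1$ are not excluded, and $S_{1,2}$ in particular is used elsewhere in the paper through exactly this statement. Your closing sketch does not repair it: half-twists with $h^2=\tau_\g$ live in the genus $0$ braid picture, and the Birman--Hilden reduction of Section~\ref{orientS_{1,2}} only reaches $S_{1,2}$, not $S_{g,2}$ for $g\geq 2$.

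The idea you are missing --- and which you explicitly, and incorrectly, rule out when you assert that orientation ``cannot be extracted from the $\I$-condition alone'' --- is the paper's key device. Although the $\I$-condition only constrains the conjugacy classes of the subgroups $\hI_\g$, an element $f\in\Aut^\I(\kG^{\pm}(S))$ still induces a well-defined \emph{automorphism} of $\hI_\g\cong\ZZ$: since the induced action on $\kC(S)$ preserves topological types, one can choose $x\in\kG(S)$ with $\inn x\circ f(\hI_\g)=\hI_\g$, and if $y$ is another such choice then $xy^{-1}$ is an orientation-preserving element fixing $\g$ and hence fixes the twist $\tau_\g$ itself, not merely the subgroup $\hI_\g$. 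The resulting unit $\chi_\g(f)\in\ZZ^\ast$ is therefore independent of the choice of $x$; it is trivial on $\Inn(\kG(S))$ and equals $-1$ on $\inn x$ for $x$ orientation-reversing. This character is precisely the orientation detector you claim is unavailable. The paper uses it to show that $\Inn(\kG(S))$ is normal in $\Aut^\I(\kG^{\pm}(S))$, hence that $\kG(S)$ is $\I$-characteristic in $\kG^{\pm}(S)$, and reduces the assertion about $\kPG^{\pm}(S)$ to that fact. To complete your proof you should either import this character or otherwise separate $\kPG^{\pm}(S)$ from the third index-two overgroup of $\kPG(S)$ when $n(S)=2$; the quotient $(\Z/2)^2$ alone will never do it.
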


\begin{proof}The pure mapping class group $\PG(S)$ is the subgroup of the extended mapping class group $\G^{\pm}(S)$ generated by Dehn twists.
Hence, $\kPG(S)$ is an $\I$-characteristic subgroup of both $\kG^{\pm}(S)$ and $\kPG^{\pm}(S)$.

In order to prove that, for $d(S)>1$ and $S\neq S_{1,2}$, $\kPG^{\pm}(S)$ is an $\I$-characteristic subgroup of $\kG^{\pm}(S)$, 
it is enough to show that $\Inn(\kPG^{\pm}(S))$ is a normal subgroup of $\Aut^\I(\kG^{\pm}(S))$. This immediately follows from the identity
$\Inn(\kPG^{\pm}(S))=\Inn(\kG^{\pm}(S))\cap\ker\Theta$, where $\Theta\co\Aut^\I(\kG^\pm(S))\to\Sigma_n$ is the representation
constructed in Lemma~\ref{autcurvecomplex}.
\end{proof}

\begin{remark}Lemma~\ref{Icharacteristic} fails for $S= S_{1,2}$. In fact, the exceptional automorphism $\exp\zeta\in\Aut^\I(\kG^{\pm}(S))$ 
constructed in Section~\ref{exAut} has the property that $\exp\zeta(\kPG^{\pm}(S))\neq\kPG^{\pm}(S)$. 
\end{remark}

From Lemma~\ref{Icharacteristic}, it follows that the elements of $\Aut^\I(\kPG^{\pm}(S))$ are compatible with the augmentation map
$\kPG^{\pm}(S)\to\Z/2$ and so preserve the set of antiholomorphic involutions. Moreover, by (ii) of Proposition~\ref{goodgroup}, for $g(S)\leq 2$,
the sets of conjugacy classes of antiholomorphic involutions in $\PG^{\pm}(S)$ and $\hPG^{\pm}(S)$ can be identified. We have:

\begin{lemma}\label{preserve}For $g(S)=0$, there is only one $\hG(S)^\pm$-conjugacy class of antiholomorphic involutions in $\hPG(S)^\pm$, on which 
$\Aut^\I(\hPG(S)^\pm)$ acts naturally.
\end{lemma}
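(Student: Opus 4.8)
The plan is to reduce the classification to the topological (discrete) setting, carry out there the classification of pure antiholomorphic involutions of the punctured sphere, and then transport the answer back to the profinite completion via the good-group property. Throughout I use that, since $g(S)=0\leq 2$, the congruence subgroup property gives $\hG^{\pm}(S)\cong\kG^{\pm}(S)$ and $\hPG^{\pm}(S)\cong\kPG^{\pm}(S)$, so the Remark following Lemma~\ref{Icharacteristic} applies and tells us that every element of $\Aut^\I(\hPG^{\pm}(S))$ preserves the set of antiholomorphic involutions of $\hPG^{\pm}(S)$. This already yields the ``acts naturally'' half of the statement \emph{provided} the set of such involutions is a single $\hG^{\pm}(S)$-conjugacy class: an automorphism carrying this set to itself then preserves that class. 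So the real content is the uniqueness of the conjugacy class.

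The first step is to descend to the topological group. Since $\hPG^{\pm}(S)$ is normal in $\hG^{\pm}(S)$ (being the kernel of the permutation action on the punctures), conjugation by $\hG^{\pm}(S)$ keeps an antiholomorphic involution inside $\hPG^{\pm}(S)$; by Proposition~\ref{anticonjugacy} it is therefore $\hG^{\pm}(S)$-conjugate to one lying in $\PG^{\pm}(S)=\hPG^{\pm}(S)\cap\G^{\pm}(S)$. By the good-group property together with Corollary~3.5 in \cite{MZ}, the inclusion $\G^{\pm}(S)\hookrightarrow\hG^{\pm}(S)$ induces a bijection on conjugacy classes of order-two elements; hence two antiholomorphic involutions in $\PG^{\pm}(S)$ are $\hG^{\pm}(S)$-conjugate if and only if they are $\G^{\pm}(S)$-conjugate. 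The problem is thus reduced to showing that there is exactly one $\G^{\pm}(S)$-conjugacy class of pure antiholomorphic involutions in $\G^{\pm}(S_{0,n})$.

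The topological heart of the argument is then the following. I would realize such an involution $\iota$ as a genuine anticonformal involution of $\P^1$ and invoke the classical classification: any such involution is conjugate either to $z\mapsto\bar z$, whose fixed locus is the circle $\rpp^1=\R\cup\{\infty\}$, or to the fixed-point-free $z\mapsto-1/\bar z$. Since $\iota$ is pure it fixes each of the $n\geq 1$ punctures, while $z=-1/\bar z$ has no solution; thus the fixed-point-free type is excluded, $\iota$ is of reflection type, its fixed set $\Fix(\iota)$ is a single separating circle, and \emph{all} $n$ punctures lie on $\Fix(\iota)$. Because finite-order mapping classes are determined up to conjugacy by the topological type of their action, and a self-homeomorphism of the sphere commuting with the reflection can realize any rearrangement of the (unlabelled, in $\G^{\pm}$) marked points along $\Fix(\iota)$, all such reflections constitute one $\G^{\pm}(S)$-conjugacy class. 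Combining the three steps gives the unique $\hG^{\pm}(S)$-conjugacy class, and the first paragraph supplies the action of $\Aut^\I(\hPG^{\pm}(S))$ on it.

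I expect the main obstacle to lie precisely in making this topological classification airtight: namely the identification of $\G^{\pm}$-conjugacy with topological equivalence for finite-order classes, and the verification that allowing the punctures to be permuted collapses the \emph{a priori} different cyclic orders of the marked points on $\Fix(\iota)$ into a single class, so that purity and genus $0$ together force exactly one topological type. The profinite ingredients (Proposition~\ref{anticonjugacy}, the good-group property, Corollary~3.5 in \cite{MZ}) are then applied as black boxes and cause no difficulty.
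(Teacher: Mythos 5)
Your argument is correct and essentially identical to the paper's: both reduce via Proposition~\ref{anticonjugacy} to the discrete group, observe that purity forces all punctures onto a nonempty, connected, separating fixed circle (so the involution is of reflection type), and conclude that any two such involutions are conjugate in $\G^{\pm}(S)$ because the cyclic orders of the punctures can be matched by a (not necessarily pure) mapping class. The only cosmetic slip is the phrase ``a self-homeomorphism commuting with the reflection can realize any rearrangement'' --- such a homeomorphism preserves $\Fix(\iota)$ and so only realizes dihedral rearrangements of the points on that circle, but this is harmless since the homeomorphism conjugating one reflection to another need not commute with either of them.
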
 

\begin{proof}Antiholomorphic involutions in $\PG(S)^\pm$ do not swap the punctures of $S$. Therefore, all the punctures lie in $\Fix(\iota)$, which,
in particular, is not empty. Since $g(S)=0$, $\Fix(\iota)$ is also connected and separating. Hence, an antiholomorphic involution 
in $\PG(S)^\pm$ is determined by the cyclic order of the punctures on $\Fix(\iota)$. Since $\G(S)^\pm$ acts transitively on such orderings, 
any two antiholomorphic involutions in $\PG(S)^\pm$ are conjugated by an element of $\G(S)^\pm$. 
The conclusion then follows from (ii) of Proposition~\ref{goodgroup}.
\end{proof}

\begin{lemma}\label{fixed}For $g(S)=0$, the fixed-point set $C(S)_0^\iota$ of an antiholomorphic involution $\iota\in\PG(S)^\pm$ is finite 
and consists of isotopy classes of simple closed curves on $S$ which have between them geometric intersection either $0$ or $2$.
\end{lemma}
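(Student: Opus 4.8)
The plan is to work in the concrete model furnished by the proof of Lemma~\ref{preserve}. Since $\iota\in\PG(S)^\pm$ is pure it fixes every puncture, so all $n$ punctures of $S$ lie on $\Fix(\iota)$, and, as $g(S)=0$, the fixed locus $F:=\Fix(\iota)$ is a single separating circle carrying these punctures in a definite cyclic order. Concretely I would realize $S$ as $\P^1(\C)$ minus $n$ real points, with $\iota$ equal to complex conjugation and $F=\rpp^1$, so that $S\ssm F$ has two components (the open upper and lower half-planes) exchanged by $\iota$. A class $[\gamma]\in C(S)_0^\iota$ is one with $[\iota(\gamma)]=[\gamma]$, and the first step is to replace it by an honest $\iota$-invariant representative, which is possible by taking the geodesic representative for an $\iota$-invariant hyperbolic metric on $S$ (such a metric exists since $\iota$ is an anticonformal involution).

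First I would classify the invariant curves. Let $\gamma$ be essential with $\iota(\gamma)=\gamma$ as a set. The restriction $\iota|_\gamma$ is an involution of a circle, hence the identity, the antipodal map, or a reflection. It cannot be the identity, since then $\gamma\subset F$, impossible because $F$ minus the punctures is a union of arcs carrying no essential simple closed curve of $S$. It cannot be the antipodal map either, since that would force $\gamma\cap F=\emptyset$, so $\gamma$ would lie in a single half-plane and $\iota(\gamma)$ in the other, contradicting $\iota(\gamma)=\gamma$. Therefore $\iota|_\gamma$ is a reflection, and (every point of $\gamma\cap F$ being fixed by $\iota|_\gamma$) $\gamma$ meets $F$ transversally in exactly its two fixed points. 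Thus every invariant curve crosses $F$ in two of the gaps between consecutive punctures and, together with the two boundary arcs it cuts off, encloses a contiguous block of punctures; its isotopy class is determined by this unordered pair of gaps (equivalently by the contiguous block, of size between $2$ and $n-2$ by essentiality). Hence $C(S)_0^\iota$ injects into the finite set of such pairs and is in particular finite.

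For the intersection statement I would take two invariant classes, with invariant representatives $\gamma_1,\gamma_2$ placed in $\iota$-equivariant minimal position: any innermost bigon between them is, by the symmetry of the configuration, either $\iota$-invariant or paired with a second bigon, so it can be removed equivariantly, and the symmetric representatives then realize $i(\gamma_1,\gamma_2)$. Writing $a_j:=\gamma_j\cap\ol{\H}$ for the embedded arc in the closed upper half-plane with its two endpoints on $F$, the reflection symmetry gives $\gamma_j=a_j\cup\iota(a_j)$ and, after a small equivariant perturbation putting the four endpoints in distinct gaps, $i(\gamma_1,\gamma_2)=i(a_1,a_2)+i(\iota(a_1),\iota(a_2))=2\,i(a_1,a_2)$. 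Two embedded arcs in a disc with endpoints on its boundary have minimal intersection $0$ when their endpoint pairs are unlinked and $1$ when they are linked; hence $i(\gamma_1,\gamma_2)\in\{0,2\}$, the value being $0$ exactly when the two contiguous blocks are nested or disjoint and $2$ when they cross.

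The step I expect to be the main obstacle is the equivariant minimal-position argument underlying $i(\gamma_1,\gamma_2)=2\,i(a_1,a_2)$: one must verify that bigon removal can always be carried out respecting $\iota$, so that minimizing the two half-arcs in the disc simultaneously minimizes the total intersection on $S$. The remaining special cases—when $\gamma_1$ and $\gamma_2$ share a gap—are then routine, handled by a small push-off within that gap which creates no new intersections and leaves the count at $0$ or $2$.
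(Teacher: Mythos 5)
Your proposal is correct and follows essentially the same route as the paper: cut $S$ along $\Fix(\iota)$ into two discs, write each invariant curve as an arc in one disc union its reflection, deduce finiteness from the finitely many induced partitions of the punctures, and get the intersection statement from the fact that two arcs in a disc have geometric intersection $0$ or $1$. Your write-up merely supplies details the paper leaves implicit (why the curve meets $\Fix(\iota)$ in exactly two points, and the equivariant minimal-position step).
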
 

\begin{proof}The complement $S\ssm\Fix(\iota)$ is the disjoint union of two unpunctured discs. Let then $D'$ and $D''$ be closed subdiscs
of $S$ such that $D'\cap D''=\Fix(\iota)$. Fix on the surface $S$ a hyperbolic complete metric compatible with the involution $\iota$. 
The geodesic representative $\alpha$ of $\{[\alpha]\}\in C(S)_0^\iota$ for this metric has then the property that $\alpha=\iota(\alpha)$ and
the intersection $\alpha\cap D'$ is a disjoint union of geodesic arcs with boundary in $\Fix(\iota)$. If $\alpha'$ is one such arc, the union
$\alpha'\cup\iota(\alpha')\subset\alpha$ is a simple closed curve, which immediately implies that $\alpha'\cup\iota(\alpha')=\alpha$.
In particular, $\alpha$ crosses transversally $\Fix(\iota)$ in the two boundary points of $\alpha'$. The isotopy class of $\alpha$ is 
then determined by the partition which $\alpha'$ induces on the set of punctures of $S$ (which all lie on $\Fix(\iota)$). 
This implies the first statement of the proposition.

For $\{[\alpha]\}\neq\{[\beta]\}\in C(S)_0^\iota$, let also $\beta$ be the geodesic representative of the isotopy class $[\beta]$ for
the fixed metric. As above, we then have that $\beta=\iota(\beta)$ and the simple closed curve $\beta$ is the union of
the two arcs $\beta'=\beta\cap D'$ and $\beta''=\beta\cap D''$. It is also clear that the geodesic arcs $\alpha'$ and $\beta'$ on
the hyperbolic disc $D'$ have geometric intersection either $0$ or $1$. The second statement of the proposition then follows as well.
\end{proof}

By Lemma~\ref{Icharacteristic}, there is a natural homomorphism $\Aut^{\I}(\kG^{\pm}(S))\to\Aut^{\I}(\kPG^{\pm}(S))$, 
for $d(S)>1$ and $S\neq S_{1,2}$, and a natural homomorphism $\Aut^{\I}(\kPG^{\pm}(S))\to\Aut^{\I}(\kPG(S))$, 
for any hyperbolic surface $S$, both induced by restriction of automorphisms. We then have:

\begin{lemma}\label{injective}For $d(S)>1$, there holds:
\begin{enumerate} 
\item For $S\neq S_{1,2}$, the homomorphism $\Aut^{\I}(\kG^{\pm}(S))\to\Aut^{\I}(\kPG^{\pm}(S))$ is injective.
\item The kernel of the homomorphism $\Aut^{\I}(\kPG^{\pm}(S))\to\Aut^{\I}(\kPG(S))$ is trivial for $S\neq S_2$
and is generated by $\exp\zeta$ for $S=S_2$.
\end{enumerate}
\end{lemma}

\begin{proof}(i): The statement is trivial for $n(S)\leq 1$ so that we can assume that either $g(S)=1$ and $n(S)>2$ or $g(S)\geq 2$ and $n(S)>1$. 
In particular, by Proposition~\ref{slim}, we can assume that the centralizer $Z_{\kG(S)^\pm}(\kPG^\pm(S))$ is trivial. This implies that the restriction 
of the homomorphism $\Aut^{\I}(\kG^{\pm}(S))\to\Aut^{\I}(\kPG^{\pm}(S))$ to $\Inn(\kG^{\pm}(S))$ is injective, so that the conclusion now follows 
from \cite[Lemma~3.3]{BF}.
\smallskip

\noindent
(ii): Since $\Aut^{\I}(\kPG^{\pm}(S))\subseteq\Aut(\kPG^{\pm}(S))_{\kPG(S)}$, in order to determine the kernel
of the homomorphism $\Aut^{\I}(\kPG^{\pm}(S))\to\Aut^{\I}(\kPG(S))$, we can use the
Wells' exact sequence associated to the short exact sequence $1\to\kPG(S)\to\kPG^{\pm}(S)\to\Z/2\to 1$.

Since $\Aut(\Z/2)=\{1\}$, this implies that there is, in particular, a natural short exact sequence (cf.\ \cite[Lemma~7.4]{BF}):
\[1\to\Hom(\Z/2,Z(\kPG(S)))\stackrel{\exp}{\to}\Aut(\kPG^{\pm}(S))_{\kPG(S)}\to\Aut(\kPG(S)).\]

If $S\neq S_2$, then $Z(\kPG(S))=\{1\}$ and the conclusion follows. For $S=S_2$, we have $Z(\kPG(S))\cong\Z/2$ and
the only nontrivial element $\phi\in\Hom(\Z/2,Z(\kPG(S)))$ is mapped to the automorphism $\exp(\phi)$ of $\kPG^{\pm}(S)$ defined by the assignment
$x\mapsto x\cdot\phi(\mathrm{aug}(x))$, where $\mathrm{aug}\co\kPG^{\pm}(S)\to\Z/2$ is the orientation character.
This coincides with the exceptional automorphism $\exp\zeta$ defined in Section~\ref{exAut}. Hence, the second part of item (ii) follows.
\end{proof}

By Lemma~\ref{injective}, we then have:
 
\begin{lemma}\label{chain}For $d(S)>1$ and $S\neq S_{1,2}$, there is a chain of natural inclusions:
\[\Inn(\kG^{\pm}(S))\subseteq\Aut^{\I}(\kG^{\pm}(S))\subseteq\Aut^{\I}(\kPG^{\pm}(S))\]
and, for $d(S)>1$, a natural homomorphism $\Aut^{\I}(\kPG^{\pm}(S))\to\Aut^{\I}(\kPG(S))$, 
whose kernel is trivial for $S\neq S_2$ and is generated by $\exp\zeta$ for $S=S_2$.
\end{lemma}

\subsection{Proof of Theorem~\ref{autpants}}
(i): This item immediately follows from Theorem~\ref{completepantsrigidity}, for the case $d(S)>1$, and from \cite[Proposition~8.2]{BF}
for the case $d(S)=1$.
\medskip

\noindent
(ii) and (iii): For $S=S_{1,2}$, by item (ii) of Proposition~\ref{(g,n)=(1,2)}, there is a natural isomorphism 
$\Aut^\I(\hG^\pm(S))\cong\Aut^\I(\hPG^\pm(S))\times\langle\exp\zeta\rangle$. For $S=S_2$, the subgroups $\Inn(\kG^{\pm}(S))$ 
and $\langle\exp\zeta\rangle$ of $\Aut^{\I}(\kPG^{\pm}(S))$ are normal and have trivial intersection, so that they generate in 
$\Aut^{\I}(\kPG^{\pm}(S))$ a subgroup isomorphic to the direct product $\Inn(\kG^{\pm}(S))\times\langle\exp\zeta\rangle$. 

Therefore, by Lemma~\ref{chain}, in order to prove items (ii) and (iii) of Theorem~\ref{autpants}, 
it is enough to show that the images of $\Inn(\kG^{\pm}(S))$ and $\Aut^{\I}(\kPG^{\pm}(S))$ in $\Aut^{\I}(\kPG(S))$ coincide. 
The idea is to use Theorem~\ref{mainlemma}. We need to consider separately three different cases.

\subsection{Proof of Theorem~\ref{autpants}, (ii), for $g(S)=0$}\label{genus0case}
By Lemma~\ref{preserve}, after composing with an inner automorphism of $\kG(S)^\pm$, we can assume that a given element 
$f\in\Aut^\I(\kPG(S)^\pm)$ preserves a fixed antiholomorphic involution $\iota\in\PG(S)^\pm$ whose fixed-point set
in the surface $S$ is a separating simple closed curve containing all punctures of $S$.

We will now make use of the results in Section~\ref{fpset}. 
By Lemma~\ref{fixed}, the fixed-point set $C(S)^\iota_0$, for the action of $\iota$ on $C(S)_0$, is finite and then, by 
Proposition~\ref{boundaryfixedpro}, identifies with the fixed-point set $\kC(S)^\iota_0$, for the action of $\iota$ on $\kC(S)_0$. 

Let $\{\alpha,\beta\}$ be a pair of $\iota$-invariant simple closed curves on $S$ which intersect precisely in two points.
An element  $f\in\Aut^\I(\kPG^{\pm}(S))$, such that $f(\iota)=\iota$, then preserves $C(S)^\iota_0=\kC(S)^\iota_0$ and 
sends the pair $\{[\alpha],[\beta]\}$ of $0$-simplices in $C(S)^\iota_0$ to the pair $\{f([\alpha]),f([\beta])\}$ also contained in $C(S)^\iota_0$.
Since $f([\alpha])$ and $f([\beta])$ cannot have trivial geometric intersection, otherwise $f$ would not preserve the simplicial structure
of $\kC(S)$, from Lemma~\ref{fixed}, it follows that $f([\alpha])$ and $f([\beta])$ have geometric intersection $2$. 

We can then complete the two $0$-simplices $\{[\alpha]\},\{[\beta]\}\in C(S)^\iota_0$ to two $(n(S)-4)$-simplices $v_\alpha,v_\beta$ of $C(S)$ 
whose sets of vertices coincide except for the elements $[\alpha]\in v_\alpha$ and $[\beta]\in v_\beta$. In this way, we have defined an edge 
$\{v_\alpha,v_\beta\}$ of the pants complex $C_P(S)\subset\kC_P(S)$ with the property that $\{f(v_\alpha),f(v_\beta)\}$ is also 
an edge of $\kC_P(S)$. By Theorem~\ref{mainlemma}, we conclude that $f\in\Inn(\kG^{\pm}(S))$.

\subsection{Proof of Theorem~\ref{autpants}, (iii), for $S=S_{1,2}$}\label{genus12case}
We will need an analogue, for extended mapping class groups, of Lemma~\ref{S_{1,2}}:

\begin{lemma}\label{S_{1,2}bis}The group $\kPG^\pm(S_{0,5})$ identifies with an $\I$-characteristic subgroup of $\kPG^\pm(S_{1,2})$.
In particular, there is a natural homomorphism $\Aut^\I(\kPG^\pm(S_{1,2}))\to\Aut^\I(\kPG^\pm(S_{0,5}))$ induced by restriction of automorphisms.
\end{lemma}

\begin{proof}As we remarked in Section~\ref{05case}, if we let $S_{/\u}$ be the quotient of the surface $S_{1,2}$ by 
the hyperelliptic involution $\u$ and let $B_\u$ be the branch locus of the orbit map $S\to S_{/\u}$, the surface $S_{/\u}$ is a $1$-punctured sphere and
there is a diffeomorphism $S_{/\u}\ssm B_\u\cong S_{0,5}$. If we denote by $Q$ the puncture of $S_{0,5}$ which corresponds to the puncture of $S_{/\u}$,
there is then a natural isomorphism $\kPG^\pm(S_{1,2})\cong\kG^\pm(S_{0,5})_Q$. In order to prove the lemma, it is then enough to prove that
$\kPG^\pm(S_{0,5})$ is an $\I$-characteristic subgroup of $\kG^\pm(S_{0,5})_Q$.

Since $\kPG(S_{0,5})$ is an $\I$-characteristic subgroup of $\kG^\pm(S_{0,5})_Q$, the same argument of proof of 
Lemma~\ref{autcurvecomplex} shows that there is a natural homomorphism $\Theta\co\Aut^\I(\kG^\pm(S_{0,5})_Q)\to\Sigma_B$, where $B$ 
is a set of labels for the punctures of $S_{0,5}$.

The symmetric group $\Sigma_{B\ssm Q}$ identifies with the stabilizer of $Q\in B$ for the tautological action of $\Sigma_B$ on $B$. 
The fact that $\Inn(\kG^\pm(S_{0,5})_Q)$ is a normal subgroup of $\Aut^\I(\kG^\pm(S_{0,5})_Q)$ and
$\Theta(\Inn(\kG^\pm(S_{0,5})_Q))=\Sigma_{B\ssm Q}$ then implies that $\Theta(\Aut^\I(\kG^\pm(S_{0,5})_Q))=\Sigma_{B\ssm Q}$ as well.

As in the proof of Lemma~\ref{Icharacteristic}, we conclude that $\Inn(\kPG^\pm(S_{0,5}))=\Inn(\kG^\pm(S_{0,5})_Q)\cap\ker\Theta$
is a normal subgroup of $\Aut^\I(\kG^\pm(S_{0,5})_Q)$, which implies that $\kPG^\pm(S_{0,5})$ is an $\I$-characteristic subgroup of $\kG^\pm(S_{0,5})_Q$.
\end{proof}

By (ii) of Lemma~\ref{injective} and Lemma~\ref{S_{1,2}}, restriction of automorphisms induces a natural monomorphism 
$\Aut^\I(\kPG^\pm(S_{1,2}))\hookra\Aut^\I(\kPG(S_{0,5}))$. By Lemma~\ref{S_{1,2}bis}, this homomorphism then factors through 
the natural homomorphisms $\Aut^\I(\kPG^\pm(S_{1,2}))\to\Aut^\I(\kPG^\pm(S_{0,5}))$ and  
$\Aut^\I(\kPG^\pm(S_{0,5}))\to\Aut^\I(\kPG(S_{0,5}))$, also both induced by restriction of automorphisms.

By the case $S=S_{0,5}$ of the theorem, treated above, we then have that 
the image of the group $\Aut^\I(\kPG^\pm(S_{1,2}))$ in $\Aut^\I(\kPG(S_{0,5}))$ is contained in the subgroup $\Inn(\kG^{\pm}(S_{0,5}))$.
This implies this case of the theorem, since, as in the proof of Theorem~\ref{mainlemma} for $S=S_{1,2}$, by \cite[Lemma~9.13]{BF}, the inner 
automorphisms of $\kG^{\pm}(S_{0,5})$ which normalize the subgroup $\kG^\pm(S_{0,5})_Q\equiv\kPG^\pm(S_{1,2})$ belong to $\Inn(\kPG^\pm(S_{1,2}))$.

\subsection{Proof of Theorem~\ref{autpants}, (ii) and (iii), for $g(S)\geq 1$ and $d(S)>2$}The hypotheses implies that, 
for a nonseparating simple closed curve $\g$ on $S$, we have $d(S\ssm\g)>1$. We then proceed by
induction on the genus of $S$, where the base for the induction is provided by Section~\ref{genus0case}.

Given an element $f\in\Aut^\I(\kPG^{\pm}(S))$, let us consider its action on $\kC(S)$. By \cite[Theorem~5.5]{BF}, after possibly composing
with an inner automorphism of $\kPG(S)$, we may assume that $f\in\Aut^\I(\kPG(S))_{\hI_\g}$, where $\hI_\g$ is the procyclic subgroup associated 
to some nonseparating simple closed curve $\g$ on $S$. In particular, the automorphism $f$ preserves the star $\Star(\g)$ and, by Lemma~\ref{factor}, 
acts on the link $\Link(\g)\cong \kC(S_\g)$ through its image in the group $\Aut^\I(\kPG^{\pm}(S_\g)$. Thus, $f$ preserves the vertex set of the 
subgraph $L_\g$ of $\kC_P(S)$, which identifies with the profinite set of $(d(S)-1)$-simplices of $\Star(\g)$, 
and acts continuously on this set through an element of $\Aut^\I(\kPG^{\pm}(S_\g))$.
Since, by Lemma~\ref{linkpants}, $L_\g\cong\kC_P(S_\g)$, the induction hypothesis implies that $f$ also preserves the edge set of $L_\g$.
By Theorem~\ref{mainlemma}, we then conclude that the image of $f$ in $\Aut^\I(\kPG(S))$ is contained in $\Inn(\kG^{\pm}(S))$.

\section{Proof of Corollary~\ref{selfnorm}}
After possibly replacing $\kG(S)$ by $\kG(S)/Z(\kG(S))$ (which is also isomorphic to a procongruence mapping class group), 
we can assume that the center $Z(\kG(S))$ of $\kG(S)$ is trivial. 

Let $f\in\Aut^\I(\kG(S))$ be an automorphism which normalizes the subgroup $\Inn(\kG^\pm(S))$. This implies that $f$ satisfies the second condition
of Lemma~\ref{Wells}. Since the first one is satisfied by the above assumption and the third one is trivially satisfied, we have that
$f$ extends to an automorphism of $\kG^\pm(S)$. The conclusion then follows from Theorem~\ref{autpants}.

\section{Proof of Theorem~\ref{genus0}}
By Theorem~\ref{autpants}, in order to prove Theorem~\ref{genus0}, we have to show that, for $g(S)=0$, there holds 
$\Aut(\hG^{\pm}(S))=\Aut^\I(\hG^{\pm}(S))$ and $\Aut(\hPG^{\pm}(S))=\Aut^\I(\hPG^{\pm}(S))$. For this, we need to
establish first that $\hPG(S)$ is a characteristic subgroup of both $\hPG^{\pm}(S)$ and $\hG^{\pm}(S)$. 
For this, we will need a series of lemmas.

The first part of the statement of the following lemma is well-known:

\begin{lemma}\label{mpowers}For $n\geq 5$, there are unique surjective homomorphisms $\G(S_{0,n})\to \Sigma_n$ and
$\G^{\pm}(S_{0,n})\to \Sigma_n$, up to automorphisms of $\Sigma_n$.
\end{lemma}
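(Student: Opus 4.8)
The plan is to exploit the fact that for $n \geq 5$ the mapping class group $\G(S_{0,n})$ is perfect (its abelianization is trivial), so that any homomorphism to a finite group factors through a quotient that is detectable on the level of the natural permutation action on punctures. First I would observe that the tautological homomorphism $\G(S_{0,n}) \to \Sigma_n$ arising from the action on the set of $n$ punctures is surjective, and similarly $\G^{\pm}(S_{0,n}) \to \Sigma_n$; this gives existence. The substance of the lemma is the uniqueness assertion, namely that any surjective homomorphism onto $\Sigma_n$ agrees with the tautological one up to post-composition with an automorphism of $\Sigma_n$.

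For uniqueness, I would argue as follows. Let $\rho\co\G(S_{0,n})\to\Sigma_n$ be an arbitrary surjection. Its kernel is a normal subgroup $N$ of finite index $n!$, so it suffices to pin down the normal subgroups of $\G(S_{0,n})$ of this index. The key structural input is that for $n\geq 5$ the only normal subgroups of $\G(S_{0,n})$ of index $n!$ are conjugate to $\PG(S_{0,n})$, the pure mapping class group, which is precisely the kernel of the tautological map. To establish this, I would use that $\PG(S_{0,n})$ is generated by Dehn twists (more precisely by twists about curves bounding two-punctured discs) together with the fact that the abelianization of $\G(S_{0,n})$ is trivial for $n\geq 5$, so that $\Sigma_n$, being a quotient, must receive the permutation action on punctures essentially canonically. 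Once $\ker\rho=\PG(S_{0,n})$, the induced map $\G(S_{0,n})/\PG(S_{0,n})\cong\Sigma_n\to\Sigma_n$ is an automorphism of $\Sigma_n$, which is exactly the claimed ambiguity.

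The extension to $\G^{\pm}(S_{0,n})$ is then a formality: since $\G(S_{0,n})$ is an index-two subgroup and $\PG(S_{0,n})=\PG^{\pm}(S_{0,n})\cap\G(S_{0,n})$ is preserved, any surjection $\G^{\pm}(S_{0,n})\to\Sigma_n$ restricts on $\G(S_{0,n})$ to one of the maps classified in the previous paragraph; I would check that the restriction remains surjective (using that $\Sigma_n$ has no subgroup of index two for $n\geq 5$ other than $A_n$, so a proper restriction image would have to be $A_n$, and then analyze that the orientation-reversing elements act trivially on the puncture set modulo the kernel) and conclude that $\PG^{\pm}(S_{0,n})=\ker$, whence the map again agrees with the tautological one up to $\Aut(\Sigma_n)$.

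The main obstacle I anticipate is the classification of index-$n!$ normal subgroups, i.e.\ showing that $\PG(S_{0,n})$ is the \emph{unique} such kernel up to conjugacy. This requires genuine input about the subgroup structure of the genus-zero mapping class group rather than formal manipulation; the cleanest route is probably to invoke the characterization of $\PG(S_{0,n})$ as the subgroup generated by Dehn twists together with the triviality of $H_1(\G(S_{0,n});\Z)$ for $n\geq 5$, reducing the problem to showing that any surjection to $\Sigma_n$ must send a standard generating set of Dehn twists to transpositions in a way forced by the braid relations, thereby recovering the permutation representation.
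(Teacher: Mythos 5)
There is a genuine gap, and it sits exactly at the step you yourself flag as ``the main obstacle.'' Your proposed route to the uniqueness statement for $\G(S_{0,n})$ rests on the claim that $\G(S_{0,n})$ is perfect, and that claim is false: $\G(S_{0,n})$ is generated by the $n-1$ half-twists $\sigma_1,\dots,\sigma_{n-1}$, which are all conjugate, and the sphere relations $\sigma_1\cdots\sigma_{n-1}\sigma_{n-1}\cdots\sigma_1=1$ and $(\sigma_1\cdots\sigma_{n-1})^n=1$ show that the abelianization is cyclic of order $\gcd\bigl(2(n-1),\,n(n-1)\bigr)=(n-1)\gcd(2,n)$, hence nontrivial for every $n\geq 3$ (e.g.\ $\Z/4$ for $n=5$). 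Beyond that, even granting some homological input, you never actually give an argument that every surjection $\G(S_{0,n})\to\Sigma_n$ has kernel $\PG(S_{0,n})$; you reduce the lemma to a ``key structural input'' which is essentially a restatement of the lemma itself (and your stronger formulation, that \emph{all} normal subgroups of index $n!$ coincide with $\PG(S_{0,n})$, is more than is needed and not obviously true). So the heart of the proof is missing.

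The paper closes precisely this gap by quoting Artin's classical theorem on braids and permutations: a group generated by $n-1$ elements satisfying the standard braid relations and acting transitively on $n$ letters admits, for $n\geq 4$, a unique epimorphism onto $\Sigma_n$ up to automorphisms of $\Sigma_n$, and, for $n\geq 5$, admits no epimorphism onto $A_n$. The first statement applied to the half-twist generators gives the case of $\G(S_{0,n})$ outright; the second is what rules out the possibility that a surjection $\G^{\pm}(S_{0,n})\to\Sigma_n$ restricts to a map onto $A_n$ on the index-two subgroup $\G(S_{0,n})$ --- a point your sketch gestures at (``analyze that the orientation-reversing elements act trivially\dots'') but does not actually establish. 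Your treatment of the extended group is otherwise structurally the same as the paper's (restrict, show the restriction is onto, conclude the kernel contains $\PG(S_{0,n})$, and finish by noting that $\Sigma_n\times\Z/2$ has an essentially unique epimorphism onto $\Sigma_n$), so the proposal would become correct if you replaced the perfectness argument by Artin's theorem or an equivalent analysis of how braid generators can map to $\Sigma_n$.
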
 

\begin{proof}The first part of the statement easily follows from classical results by Artin in \cite{Artin} (cf.\ \cite{Kolay}, for an 
alternative proof). Let $G$ be a group which acts transitively on a set of $n$ letters and is generated by $n-1$ elements satisfying the standard braid relations. 
By (cf.\ \cite[Theorem~3]{Artin} and its proof), for $n\geq 4$, there is a unique epimorphism $G\to \Sigma_n$, up to automorphisms of $\Sigma_n$. 
This result applies, in particular, to the mapping class group $\G(S_{0,n})$, from which, the first statement of the lemma follows.

Let us now prove that, up to automorphisms of $\Sigma_n$, there is a unique epimorphism $\G^{\pm}(S_{0,n})\to \Sigma_n$. 
By \cite[Lemma~6]{Artin}, a group $G$ as above does not admit an epimorphism to the alternating group $A_n$, for $n\geq 5$. 
This implies that there is no epimorphism $\G(S_{0,n})\to A_n$, for $n\geq 5$. Hence, every epimorphism $\G^{\pm}(S_{0,n})\to \Sigma_n$ 
restricts to an epimorphism $\G(S_{0,n})\to \Sigma_n$. From the first part of the proof, it then follows that the kernel of any epimorphism 
$\G^{\pm}(S_{0,n})\to \Sigma_n$ contains the pure mapping class group $\PG(S_{0,n})$.

\begin{lemma}\label{quotientiso}There is a natural isomorphism $\G^{\pm}(S_{0,n})/\PG(S_{0,n})\cong\Sigma_n\times\Z/2$.
\end{lemma}

\begin{proof}The natural homomorphisms $\G^{\pm}(S_{0,n})\to\Sigma_n$ and $\G^{\pm}(S_{0,n})\to\Z/2$
determine a homomorphism $\phi\co\G^{\pm}(S_{0,n})\to\Sigma_n\times\Z/2$, which maps the subgroups $\G(S_{0,n})$ and
$\PG^{\pm}(S_{0,n})$ of $\G^{\pm}(S_{0,n})$, respectively, onto the subgroups $\Sigma_n\times\{1\}$ and $\{1\}\times\Z/2$ of $\Sigma_n\times\Z/2$.
Hence, the homomorphism $\phi$ is surjective. Since the kernel of $\phi$ contains $\PG(S_{0,n})$ and the groups $\G^{\pm}(S_{0,n})/\PG(S_{0,n})$ 
and $\Sigma_n\times\Z/2$ have the same order, the conclusion follows.
\end{proof}

The second claim of the lemma now follows from Lemma~\ref{quotientiso} and the fact that the group $\Sigma_n\times\Z/2$ 
admits a unique epimorphism to $\Sigma_n$, up to automorphisms of the latter group.
\end{proof}

We also have the following group-theoretic lemma (cf.\  \cite[Lemma~2.3]{MN}):

\begin{lemma}\label{charcriterion}Let $G$ be a finitely generated group and $V$ a finite index normal subgroup with the property that 
all epimorphisms from $G$ to the quotient group $G/V$ have the same kernel $V$.
Then, the closure $\wh{V}$ of the image of $V$ in the profinite completion $\wh{G}$ of $G$ is an open characteristic subgroup.
\end{lemma}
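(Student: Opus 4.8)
The plan is to reduce the assertion that $\wh V$ is characteristic to the hypothesis on epimorphisms, after first checking that $\wh V$ is open of index exactly $[G:V]$. Since $G/V$ is finite, the quotient $q\co G\to G/V$ factors through the profinite completion by the universal property, yielding a continuous surjection $\wh q\co\wh G\to G/V$ extending $q$. I would then identify $\wh V=\ker\wh q$ by a density argument: we have $\wh V=\overline V\subseteq\ker\wh q$ because $\ker\wh q$ is closed and contains $V$, while conversely $\ker\wh q$ is open and $G$ is dense in $\wh G$, so $V=G\cap\ker\wh q$ is dense in $\ker\wh q$ and its closure is all of $\ker\wh q$. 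Hence $\wh V$ is open and normal of index $[G:V]$, and $G\cap\wh V=V$.

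To prove $\wh V$ is characteristic, let $\phi$ be any continuous automorphism of $\wh G$. Then $\phi(\wh V)$ is again open and normal of index $[G:V]$, so $\wh G/\phi(\wh V)\cong\wh G/\wh V\cong G/V$. Restricting the quotient map $\wh G\to\wh G/\phi(\wh V)$ along $G\hookra\wh G$ produces a homomorphism $\pi\co G\to\wh G/\phi(\wh V)$; since $G$ is dense and the target is finite (hence discrete), $\pi$ is surjective. Composing with an isomorphism $\wh G/\phi(\wh V)\cong G/V$ yields an epimorphism $G\to G/V$ whose kernel is the preimage $G\cap\phi(\wh V)$.

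Now the hypothesis enters decisively: every epimorphism $G\to G/V$ has kernel $V$, so $G\cap\phi(\wh V)=V$. In particular $V\subseteq\phi(\wh V)$, and since $\phi(\wh V)$ is closed we obtain $\wh V=\overline V\subseteq\phi(\wh V)$. As $\wh V$ and $\phi(\wh V)$ are both open of the \emph{same} finite index $[G:V]$ in $\wh G$, this inclusion forces $\wh V=\phi(\wh V)$. Since $\phi$ was arbitrary, $\wh V$ is characteristic.

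The argument is essentially formal once the setup is in place, so I would not expect a serious obstacle; the two points needing care are the surjectivity of $\pi$ (which rests on density of $G$ in $\wh G$ together with finiteness of the quotient) and the passage from the containment $\wh V\subseteq\phi(\wh V)$ to equality, which uses that $\phi$ is an automorphism so that both subgroups have equal finite index. The genuinely substantive input is the hypothesis that all epimorphisms onto $G/V$ share the kernel $V$: this is precisely what converts the topological statement about automorphisms of $\wh G$ into the elementary equality $G\cap\phi(\wh V)=V$, and it is here that finite generation of $G$ is implicitly used, guaranteeing that the open subgroup $\wh V$ is recovered from the single finite quotient $G/V$.
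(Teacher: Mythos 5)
Your argument is correct for what it proves, and its engine is the same as the paper's: restrict a quotient map of $\wh G$ by an open normal subgroup to the dense subgroup $G$, observe the restriction is onto the finite discrete quotient, and invoke the hypothesis that every epimorphism $G\to G/V$ has kernel $V$ to pin the subgroup down. The identification $\wh V=\ker\wh q$ and the index count forcing the inclusion $\wh V\subseteq\phi(\wh V)$ to be an equality are both fine.

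However, you only treat \emph{continuous} automorphisms $\phi$ of $\wh G$, whereas the paper proves that $\wh V$ is characteristic for \emph{arbitrary} abstract automorphisms; this stronger form is what the application requires (Theorem~\ref{genus0} asserts completeness of $\hG^{\pm}(S)$ with respect to all automorphisms, and Lemma~\ref{inertiacond} restricts arbitrary automorphisms along the characteristic subgroup $\hPG(S)$). The paper closes this gap by citing Nikolov--Segal: since $G$ is finitely generated, $\wh G$ is topologically finitely generated, so every epimorphism $\wh G\to G/V$ (in particular the composite of an abstract automorphism with the quotient map) is automatically continuous, and your argument then goes through verbatim. This also corrects your closing diagnosis of where finite generation enters: it is not needed to recover the open subgroup $\wh V$ from the finite quotient (your argument for continuous $\phi$ never uses it), but precisely to guarantee continuity of a priori non-continuous maps out of $\wh G$. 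Adding one sentence invoking Nikolov--Segal at the start, and then running your argument for an arbitrary automorphism, yields the full statement.
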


\begin{proof}Since $G$ is finitely generated, by a classical result of Nikolov and Segal, any epimorphism $\wh{G}\to G/V$ is continuous 
and so restricts to an epimorphism $G\to G/V$, which, by our hypothesis, has kernel $V$. Hence, all epimorphisms from $\wh{G}$ to $G/V$ 
have the same kernel $\wh{V}$, which shows that $\wh{V}$ is indeed a characteristic subgroup of $\wh{G}$.
\end{proof}

\begin{lemma}\label{index2}For $n\geq 4$, let $\G$ be an index $2$ subgroup of $\PG^\pm(S_{0,n})$ distinct from $\PG(S_{0,n})$.
Then, for every prime $p>2$, there holds $\dim H^1(\G,\F_p)<\dim H^1(\PG(S_{0,n}),\F_p)$.
\end{lemma}

\begin{proof}Let $\G':=\G\cap\PG(S_{0,n})$ and $C:=\G/\G'$. Then, $C$ is cyclic of order $2$ and, for every prime $p>2$, there holds
$H^1(\G,\F_p)=H^1(\G',\F_p)^C$. Since $\PG(S_{0,n})$ is generated by Dehn twists about simple closed curves bounding a $2$-punctured disc,
there is a puncture on $S_{0,n}$ such that the kernel $\pi_1(S_{0,n-1})$ of the epimorphism $q\co\PG^\pm(S_{0,n})\to\PG^\pm(S_{0,n-1})$, obtained filling in 
this puncture, is not contained in $\G'$. This implies that $q(\G')=\PG(S_{0,n-1})$ (and also $q(\G)=\PG^\pm(S_{0,n-1})$), so that there is a short exact sequence:
\[1\to K\to\G'\stackrel{q}{\to}\PG(S_{0,n-1})\to 1,\]
where $K:=\G'\cap\pi_1(S_{0,n-1})$ is a subgroup of index $2$ of $\pi_1(S_{0,n-1})$. By the associated inflation-restriction exact sequence, after
taking $C$-invariants, we get the exact sequence:
\[0\to H^1(\PG(S_{0,n-1}),\F_p)^C\to H^1(\G',\F_p)^C\to H^1(K,\F_p)^C.\]

The group $C$ is generated by the image of an orientation reversing element $f\in\PG^\pm(S_{0,n})$ and
$H^1(\PG(S_{0,n-1}),\F_p)^C=H^1(\PG(S_{0,n-1}),\F_p)^{q(f)}$. For every Dehn twist $\tau_\g\in\PG(S_{0,n-1})$, there holds
$q(f)(\tau_\g)p(f)^{-1}=\tau^{-1}_{q(f)}$. Hence, $q(f)$ acts on the abelianization $\PG(S_{0,n-1})^\mathrm{ab}$ by multiplication by $-1$,
which implies that $H^1(\PG(S_{0,n-1}),\F_p)^C=\{0\}$.

By the standard description of a double unramified covering of a punctured sphere, 
the group $K$ has a presentation $K=\langle u_1^2,u_2^2,t_1,\dots,t_{2(n-3)}|\prod u_1^2 u_2^2 t_1\dots t_{2(n-3)}=1\rangle$,
where $u_1,u_2\in\pi_1(S_{0,n-1})$ are freely homotopic to peripheral simple closed curves on $S_{0,n-1}$.

The abelianization $K^\mathrm{ab}$ is then described as the abelian group freely generated by the images
$\bar u_1^2,\bar u_2^2,\bar t_1,\dots,\bar t_{2(n-3)}$ of the standard generators of $K$ in $K^\mathrm{ab}$ subjected 
to the only relation $\bar u_1^2+\bar u_2^2+\bar t_1+\dots+\bar t_{2(n-3)}=0$.
 
Let us now consider the action (induced by conjugation) of $f$ on $K^\mathrm{ab}$. Since the action of $\PG^\pm(S_{0,n-1})$ on 
$S_{0,n-1}$ fixes peripheral simple closed curves changing their orientation according to the orientation character, we have that the induced action of 
$f$ on $K^\mathrm{ab}$ restricts on the primitive submodule generated by $\bar u_1^2$ and $\bar u_2^2$ to multiplication by $-1$. This implies that 
there is an inequality $\dim H^1(K,\F_p)^C\leq\dim H^1(K/\langle\bar u_1^2,\bar u_2^2\rangle,\F_p)<2(n-3)$, so that, in conclusion, we obtain the inequality 
$\dim H^1(\G',\F_p)^C=\dim H^1(\G,\F_p)<2(n-3)$.

On the other hand, it is well known that $\dim H^1(\PG(S_{0,n}),\F_p)=(n-1)(n-2)/2 -1\geq 2(n-3)$, for $n\geq 4$, and the lemma follows.
\end{proof}

We can now prove the following refinement of \cite[Proposition~4.1, (ii)]{MN}:

\begin{theorem}\label{characteristic}For $g(S)= 0$, the profinite pure mapping class group $\hPG(S)$ is a characteristic subgroup of $\hG(S)$,
$\hPG^{\pm}(S)$ and $\hG^{\pm}(S)$.
\end{theorem}

\begin{proof}For $n(S)=4$, we only need to observe that $\hPG(S)$ is the maximal normal free profinite subgroup contained 
in all the groups in the statement of the proposition. For $n(S)\geq 5$, the statement that $\hPG(S)$ (resp.\ $\hPG^\pm(S)$) is a characteristic subgroup 
of $\hG(S)$ (resp.\ $\hG^\pm(S)$) follows from Lemma~\ref{mpowers} and Lemma~\ref{charcriterion}. 

In order to complete the proof of the theorem, we only need to show that $\hPG(S)$ is a characteristic subgroup of $\hPG^{\pm}(S)$.
For every group $G$, there holds $H^1(\wh{G},\F_p)\cong H^1(G,\F_p)$. 
This, together with Lemma~\ref{index2}, implies that $\hPG(S)$ is characteristic in $\hPG^{\pm}(S)$.
\end{proof}

The following result does not appear anywhere in the literature and is thus of independent interest:

\begin{theorem}\label{decpreservation}For $g(S)= 0$ and $n(S)\geq 5$, we have $\Aut^\I(\hPG(S))=\Aut(\hPG(S))$.
\end{theorem}

\begin{proof}The following weak version of Theorem~\ref{decpreservation} is essentially a consequence of \cite[Corollary~C]{HMM}
and \cite[Main Theorem]{HS}:

\begin{lemma}\label{HosMinMoch}For $g(S)= 0$ and $n(S)\geq 5$, every automorphisms of $\hPG(S)$ preserves the set of procyclic inertia groups
topologically generated by the profinite Dehn twists of topological type a simple closed curve bounding a $2$-punctured disc on $S$.
\end{lemma}

\begin{proof}Let us translate some of the terminology and results from \cite{HMM} in our setting. Let $\ol{S}$ be the surface obtained from $S$ 
filling in $1\leq n'< n(S)-3$ of the $n(S)$ punctures of $S$. There is then an associated epimorphism of mapping class groups
$p_{\ol{S}}\co\PG(S)\to\PG(\ol{S})$ and so of profinite mapping class groups $\hat{p}_{\ol{S}}\co\hPG(S)\to\hPG(\ol{S})$.
The subgroup $\ker\hat{p}_{\ol{S}}$ of $\hPG(S)$ is what is called in \cite{HMM} a \emph{generalized fiber subgroup} (cf.\ Definition~2.1 ibid.).

In \cite[Definition~2.1]{HMM}, the group $\Out^\mathrm{gF}(\hPG(S))$\index{$\Out^\mathrm{gF}(\hPG(S))$,  the subgroup of outer automorphisms which preserve all generalized fiber subgroups} is then defined to be the subgroup of $\Out(\hPG(S))$ 
consisting of those automorphisms which preserve \emph{all} generalized fiber subgroups. In particular, there is also an induced homomorphism:
\[\hat{q}_{\ol{S}}\co\Out^\mathrm{gF}(\hPG(S))\to\Out(\hPG(\ol{S})).\]

Note that there is an outer action of the symmetric group $\Sg_{n(S)}\cong\hG(S)/\hPG(S)$ 
on $\hPG(S)$ induced by restriction of the inner automorphisms of $\hG(S)$ to $\hPG(S)$. In this way, $\Sg_{n(S)}$ is also identified with a 
subgroup of $\Out(\hPG(S))$ and it is easy to check that there holds:
\[\Out^\mathrm{gF}(\hPG(S))\cap\Sg_{n(S)}=\{1\}.\]

The statement of \cite[Corollary~C]{HMM} can then be parsed, in the above terminology, into the following series of statements 
(cf.\ \cite[Corollary~2.6 and Corollary~2.8]{HMM}):
\begin{enumerate}
\item there holds $Z_{\Out(\hPG(S))}(\Out^\mathrm{gF}(\hPG(S)))=\Sg_{n(S)}$;
\item there is a direct product decomposition $\Out(\hPG(S))=\Out^\mathrm{gF}(\hPG(S))\times\Sg_{n(S)}$;
\item for $n(\ol{S})=5$, the homomorphism $\hat{q}_{\ol{S}}$ is injective and identifies $\Out^\mathrm{gF}(\hPG(S))$
with the Grothendieck-Teichm\"uller group $\GT\subset\Out(\hPG(\ol{S}))$ as defined in \cite{HS} (cf.\ \cite[Definition~2.7]{HMM}).
\end{enumerate}

For $\g$ a simple closed curve bounding a $2$-punctured disc on $S$ containing a puncture labeled by $P$, 
the Dehn twist $\tau_\g\in\PG(S)$ is the image, by the push map $\pi_1(\ol{S}, P)\hookra\PG(S)$, of a simple loop around some puncture
of $\ol{S}$, where $\ol{S}$ is the surface obtained from $S$ filling in the puncture $P$ with a point labeled by the same letter.
It is clear that the image of the push map is generated by such elements and that the image of the profinite push map 
$\hpi_1(\ol{S}, P)\hookra\hPG(S)$ coincides with the generalized fiber subgroup $\ker\hat{p}_{\ol{S}}$ defined above.

From the definition of the group $\Out^\sharp(\hPG(S))$ in \cite{HS} (cf.\ Section~\ref{mainlemma=1}), it then easily follows that this group 
is contained in $\Out^\mathrm{gF}(\hPG(S))$. 

In particular, for $n(\ol{S})=5$, the epimorphism $\hat{p}_{\ol{S}}\co\hPG(S)\to\hPG(\ol{S})$
induces a homomorphism $\Out^\sharp(\hPG(S))\to\Out^\sharp(\hPG(\ol{S}))$ which, according to \cite[Main Theorem]{HS} and its proof, 
is an isomorphism and identifies $\Out^\sharp(\hPG(S))$ with the Grothendieck-Teichm\"uller group $\GT:=\Out^\sharp(\hPG(\ol{S}))$.

Thus, combining the results of \cite{HMM} and \cite{HS} exposed above, we have that:
\[\Out^\sharp(\hPG(S))=\Out^\mathrm{gF}(\hPG(S)).\]

By \cite[Corollary~2.6, (i)]{HMM}, we then have that given an automorphism $f\in\Aut(\hPG(S))$, after possibly composing it with the restriction 
of an inner automorphism of $\hG(S)$, we can assume that the image $\ol{f}$ of $f$ in $\Out(\hPG(S))$ is contained in the subgroup
$\Out^\sharp(\hPG(S))$ of $\Out(\hPG(S))$, which implies the lemma.
\end{proof} 

For $n(S)= 5$, every essential simple closed curve on $S$ bounds a $2$-punctured disc. Therefore, in this case, Lemma~\ref{HosMinMoch}
directly implies that $\Aut^\I(\hPG(S))=\Aut(\hPG(S))$. For $n(S)> 5$, the proof of the theorem proceeds by induction on $n(S)$. 
Let us then assume that the theorem holds for $n(S)= k-1$, where $n(S)\geq 6$, and let us prove it for $n(S)= k$.

Let us show that the action of $\Aut(\hPG(S))$, for $n(S)= k$, preserves the set of all inertia groups $\{\hI_\s\}_{\s\in\hC(S)}$ of $\hPG(S)$. 
As a first step, let us prove that:

\begin{lemma}\label{dualgraphpres}$\Aut(\hPG(S))$ preserves the subsets $\{\hI_\s\}_{\s\in\hC(S)_h}$, for $h=k-4,k-5$.
\end{lemma}

\begin{proof}It is enough to prove that, for $f\in\Aut(\hPG(S))$ and $\s\in C(S)_h$, for $h=k-4,k-5$, we have $f(\hI_\s)=\hI_{\s''}$, 
for some $\s''\in\hC(S)_h$. The hypothesis on $h$ implies that there is a simple closed curve $\beta\in\s$ on $S$ bounding a $2$-punctured disc and 
such that $\hI_\s\subset\hPG(S)_\beta$. By Lemma~\ref{HosMinMoch}, after composing $f$ with some inner automorphism, 
we can then assume that $f$ preserves the subgroup $\hPG(S)_\beta$.

By \cite[Theorem~4.10]{BF} (cf.\ the proof of Lemma~\ref{factor}), the latter group is described by the short exact sequence:
\[1\to\hI_\beta\to\hPG(S)_\beta\to\hPG(S_\beta)\to 1,\]
where $S_\beta$ is the connected component of $S\ssm\beta$ which contains more than two punctures.

In particular, since the center of $\hPG(S_\beta)$ is trivial, we have that $Z(\hPG(S)_\beta)=\hI_\beta$. Therefore, $f$ induces an automorphism
$\bar f$ of the quotient group $\hPG(S)_\beta/\hI_\beta\cong\hPG(S_\beta)$. 

Let us denote by $\bar I_\s\cong\hI_\s/\hI_\beta$ the image of $\hI_\s$ 
in the quotient. Note that $\bar I_\s$ identifies with an inertia group of $\hPG(S_\beta)$. By the inductive hypothesis, we then have that 
$\bar f(\bar I_\s)=\hI_{\s'}$, for some $\s'\in\hC(S_\beta)_{h-1}$. The embedding $S_\beta\subset S$ induces a continuous map of profinite sets 
$\hC(S_\beta)_{h-1}\to\hC(S)_{h-1}$. Hence, if we let $\tilde\s'$ be the image of $\s'$ in $\hC(S)_{h-1}$, for $\s'':=\tilde\s'\cup\beta$, 
there holds $f(\hI_\s)=\hI_{\s''}$, which proves the lemma.
\end{proof}

In Remark~\ref{groupthreal}, we identified the profinite curve complex $\hC(S)$ with the abstract simplicial 
profinite complex $\hC_\cI(S)$ whose set of $h$-simplices is the set of closed subgroups $\{\hI_\s\}_{\s\in\hC_h(S)}$.
The dual graph $\hC^\ast_\cI(S)$\index{$\hC^\ast_\cI(S)$, dual graph of $\hC_\cI(S)$} of $\hC_\cI(S)$ (cf.\ \cite[Definition~3.9]{BF})
has for vertex and edge sets, respectively, the sets $\{\hI_\s\}_{\s\in\hC(S)_h}$, for $h=k-4,k-5$. Thus, by Lemma~\ref{dualgraphpres}, 
the natural action of $\Aut(\hPG(S))$ on the set of closed subgroups of $\hPG(S)$ induces a representation $\Aut(\hPG(S))\to\Aut(\hC^\ast(S))$. 

By \cite[Lemma~6.5]{BF}, the profinite curve complex $\hC(S)$ can be reconstructed from its dual graph $\hC^\ast(S)$ 
and there is a natural isomorphism $\Aut(\hC^\ast(S))\cong\Aut(\hC(S))$. It then follows that the action of $\Aut(\hPG(S))$ 
on the set of closed subgroups preserves the set of \emph{all} inertia groups of $\hPG(S)$. 
\end{proof}

We now have the following corollary of Theorem~\ref{decpreservation}, which, as observed above, implies Theorem~\ref{genus0}:

\begin{corollary}\label{inertiacond}For $g(S)= 0$ and $n(S)\geq 5$, there holds $\Aut(\hG^{\pm}(S))=\Aut^\I(\hG^{\pm}(S))$ and 
$\Aut(\hPG^{\pm}(S))=\Aut^\I(\hPG^{\pm}(S))$.
\end{corollary}

\begin{proof}Let us make the trivial remark that, if an automorphism $f\in\Aut(\hPG(S))$ preserves the set $\{\hI_\s\}_{\s\in\hC(S)}$ of all inertia groups
(which are indeed contained in $\hPG(S)$), and it extends to an automorphism $\tilde f\in\Aut(\hG^{\pm}(S))$ (resp.\ $\tilde f\in\Aut(\hPG^{\pm}(S))$), 
then the automorphism $\tilde f$ also preserves the set $\{\hI_\s\}_{\s\in\hC(S)}$.

By Theorem~\ref{characteristic}, every automorphism of $\hG^{\pm}(S)$ (resp.\ $\hPG^{\pm}(S)$) restricts to an automorphism of
$\hPG(S)$. The corollary then follows from Theorem~\ref{decpreservation} and the above remark.
\end{proof}

\section{Proof of Corollary~\ref{absanabelian}}
The following lemma is well known. We give a proof for lack of a suitable reference:

\begin{lemma}\label{realetale}For a suitable choice of base point, the \'etale fundamental group of $(\cM_{g,n})_\R$ identifies with the extended
profinite mapping class group $\hPG^\pm(S_{g,n})$.
\end{lemma}
 
\begin{proof}A morphism $\Spec\R\to(\cM_{g,n})_\R$ is equivalent to the datum of a smooth $n$-punctured, genus $g$ curve $C$
defined over $\R$. Let $(C_\C,\iota)$ be the corresponding real complex algebraic curve and let us denote by $\xi\co\Spec\C\to(\cM_{g,n})_\R$
the geometric base point associated to the standard embedding $\R\subset\C$. 

Complex conjugation defines a (nontrivial) automorphism $\u$ of the geometric base point $\xi$ and then a (nontrivial) element, 
which we also denote by $\u$, of the \'etale fundamental group $\pi_1^\mathrm{et}((\cM_{g,n})_\R,\xi)$. The element $\u$, 
together with the image of the geometric \'etale fundamental group $\pi_1^\mathrm{et}((\cM_{g,n})_\C,\xi)$ in $\pi_1^\mathrm{et}((\cM_{g,n})_\R,\xi)$ 
(cf.\ the short exact sequence~ \eqref{fundexsequence}) generates the latter group. More precisely, after identifying the geometric 
\'etale fundamental group with its image in the \'etale fundamental group, there holds:
\[\pi_1^\mathrm{et}((\cM_{g,n})_\R,\xi)=\pi_1^\mathrm{et}((\cM_{g,n})_\C,\xi)\rtimes\langle\u\rangle.\]

Let us fix a homeomorphism $\phi\co S_{g,n}\stackrel{\sim}{\to}C$. The point $[\phi]\in\cT(S_{g,n})$ lies above $\xi$ and so determines 
an isomorphism $\pi_1((\cM_{g,n})_\C,\xi)\cong\PG(S_{g,n})$ and then $\pi_1^\mathrm{et}((\cM_{g,n})_\C,\xi)\cong\hPG(S_{g,n})$.
Let $\td\u$ be the mapping class of $\phi^{-1}\circ\iota\circ\phi$. This is an antiholomorphic involution of $\PG^\pm(S_{g,n})$ and there holds:
\[\PG^\pm(S_{g,n})=\PG(S_{g,n})\rtimes\langle\td\u\rangle\hspace{0.4cm}\mbox{and then also}\hspace{0.4cm}
\hPG^\pm(S_{g,n})=\hPG(S_{g,n})\rtimes\langle\td\u\rangle.\]

The action of $\u$ on the base point $\xi$ extends to an action on the complex moduli stack $(\cM_{g,n})_\C$. 
It is easy to check (for instance, by looking at the induced actions on the respective tangent spaces at $\xi$ and $[\phi]$) 
that the action of $\td\u$ on the universal covering $\cT(S_{g,n})$ is a lift the action of $\u$ on $(\cM_{g,n})_\C$. 
This implies that the action of $\u$ on $\pi_1^\mathrm{et}((\cM_{g,n})_\C,\xi)$ is compatible with the action of $\td\u$
on $\hPG(S_{g,n})$ via the isomorphism $\pi_1^\mathrm{et}((\cM_{g,n})_\C,\xi)\cong\hPG(S_{g,n})$.
Identifying $\u$ and $\td\u$ then gives the desired isomorphism $\pi_1^\mathrm{et}((\cM_{g,n})_\R,\xi)\cong\hPG^\pm(S_{g,n})$.
\end{proof}

Corollary~\ref{absanabelian} now immediately follows from Lemma~\ref{realetale}, Theorem~\ref{genus0}, the natural isomorphisms~\eqref{sigmaiso} 
and the fact that, by Royden's Theorem (cf.\ \cite[Section~2, Theorem]{EK}), for all $n\geq 5$, we have: 
\[\Aut_\R((\cM_{0,n})_\R)=\Aut((\cM_{0,n})_\C)=\Sigma_n.\]

\printindex


\begin{thebibliography}{99}





\bibitem{Aramayona} J. Aramayona. \textsl{Simplicial embeddings between pants graphs.} Geom. Dedicata {\bf 144} (2010), 115--128. 

\bibitem{Artin} M. Artin. \textsl{Braids and permutations.} Ann. Math. {\bf 48} (1947), 643--649.


\bibitem{Asada}M. Asada. \textsl{The faithfulness of the monodromy representations associated
with certain families of algebraic curves.} J. Pure Appl. Algebra {\bf 159}, (2001), 123--147. 


\bibitem{B1} M. Boggi. \textsl{Profinite Teichm{\"u}ller theory.} Math. Nachr. {\bf 279} (2006), 953--987.


\bibitem{Hyp} M. Boggi. \textsl{The congruence subgroup property for the hyperelliptic 
Teichm\"uller modular group: the open surface case}. Hiroshima Math. J. {\bf 39} (2009), 351--362.  

\bibitem{Boggi} M. Boggi. \textsl{On the procongruence completion of the Teichm\"uller modular group.}
Trans. Amer. Math. Soc. {\bf 366} (2014), 5185--5221. 

\bibitem{sym} M. Boggi. \textsl{Galois coverings of moduli spaces of curves and loci of curves with symmetry.} 
Geom. Dedicata {\bf 168} (2014), 113--142. 



\bibitem{CongTop}M. Boggi. \textsl{Congruence topologies on the mapping class group}. J. Algebra {\bf 546} (2020), 518--552. 


\bibitem{BF}M.~Boggi. L.~Funar.  \textsl{Automorphisms of procongruence curve and pants complexes}. J. Topol.  
6 (2023), 936--989. 

\bibitem{BZ}M.~Boggi. P.~Zalesskii. \textsl{Finite subgroups of the profinite completion of good groups}. Bull. London Math. Soc. vol {\bf 57}, no. 1 (2025),
236--255.


\bibitem{BM}T. Brendle and D. Margalit. \textsl{Point pushing, homology and the hyperlliptic involution}, Michigan  Math. J.  
{\bf 62} (2013), 451--473. 




\bibitem{EK} C.J. Earle, I. Kra. \textsl{On isometries between Teichm\"uller spaces.}
Duke Math. J. {\bf 41} (1974), 583--591.

\bibitem{FM}B. Farb and D. Margalit. \textsl{A primer on mapping class groups}. Princeton Math. Series, Vol. {\bf 49}, 2012. 






\bibitem{Gropper}N.~Gropper. \textsl{Surfaces and $p$-adic fields I: Dehn twists.} \url{https://arxiv.org/pdf/2302.04309.pdf} (2023).
 
\bibitem{Grossman}E.K. Grossman. \textsl{On residual finiteness of certain mapping class
groups}. J. London Math. Soc.  n. 2, vol. {\bf 9} (1974), 160--164.



\bibitem{HS} D. Harbater, L. Schneps. \textsl{Fundamental groups of moduli and the Grothendieck-Teichm\"uller group.} 
Trans. Amer. Math. Soc. {\bf 352} (2000), 3117--3148.





\bibitem{HM} Y.Hoshi, S.Mochizuki. \textsl{Topics surrounding the combinatorial anabelian geometry of hyperbolic curves I: 
Inertia groups and profinite Dehn twists.}  Adv. Stud. Pure Math. {\bf 63} (2012), 659--811.

\bibitem{HMM}Y.\ Hoshi, A. Minamide, S.\ Mochizuki. \textsl{Group-theoreticity of numerical invariants
and distinguished subgroups of configuration space groups}. Kodai Math. J. {\bf 45} (2022), 295--348.

\bibitem{Huisman} J. Huisman. \textsl{The equivariant fundamental group, uniformization of real algebraic curves, and global complex analytic
coordinates on Teichm\"uller spaces.} Ann. Fac. Sci. Toulouse Math. $6^e$ s\'erie, tome {\bf 10}, $n^o 4$ (2001), 659--682.

\bibitem{IN} Y. Ihara, H. Nakamura. \textsl{Some illustrative examples for anabelian geometry in high dimensions.} 
In {\it Geometric Galois Actions I}, edited by L. Schneps and P. Lochak, Lon. Math. Soc. LNS {\bf 242} (1997), 127-138.


\bibitem{I0}N.V. Ivanov. \textsl{Automorphisms of Teichm\"uller modular groups.} In  \emph{Topology and Geometry, Rohlin Seminar 1984--1986.} 
Lecture Notes in Math. {\bf 1346}, Springer, Berlin (1988), 199--270.


\bibitem{I2} N.V. Ivanov. \textsl{Automorphisms of complexes of curves and of Teichm{\"u}ller spaces.}
Int. Math. Res. Not. IMRN {\bf 14} (1997), 651--666.



\bibitem{Kolay} S. Kolay. \textsl{Smallest noncyclic quotients of braid and mapping class groups.} Geom. Topol. {\bf 27} (2023), 2479--2496.

\bibitem{Korkmaz} M. Korkmaz. \textsl{Automorphisms of complexes of curves on punctured spheres 
and on punctured tori.} Topology and its Applications {\bf 95} (1999) 85--111.

\bibitem{Kumpitsch}
T. Kumpitsch. \textsl{Outer automorphisms of the absolute Galois group of local fields of mixed characteristic.} PhD thesis, Frankfurt (2022). 

\bibitem{Lochak} P. Lochak. \textsl{On procongruence curve complexes and their automorphism.} 
St. Petersburg Math. J. Vol. {\bf 35}, no. 3 (2024), 477--535.

\bibitem{LS} P. Lochak, L. Schneps. \textsl{A cohomological interpretation of the Grothendieck-Teichm{\"u}ller group.} 
Invent. Math. {\bf 127} (1997), 571--600.


\bibitem{Luo} F. Luo. \textsl{Automorphisms of the complex of curves.} Topology {\bf 39} (2000), 283--298.

\bibitem{Margalit} D. Margalit. \textsl{Automorphisms of the pants complex.} Duke Math. J. {\bf 121} (2004), 457--479.

\bibitem{McReynolds}D. B. McReynolds. \textsl{The congruence subgroup problem for pure braid groups: Thurston's proof.}
New York J. Math. {\bf 18} (2012) 925--942.

\bibitem{McCarthy} J.D. McCarthy. \textsl{Automorphisms of surface mapping class groups:
A recent theorem of N.~Ivanov.} Invent. Math. {\bf 84} (1986), 49--71.

\bibitem{MN}A. Minamide, H. Nakamura. \textsl{The automorphism groups of the profinite braid groups.}
Amer. J. Math. {\bf 144} (5) (2022), 1159--1176.



\bibitem{MTopics}S. Mochizuki. \textsl{Topics Surrounding the Anabelian Geometry of Hyperbolic Curves.}
In: \emph{Galois groups and fundamental groups}. Vol. {\bf 41}. Math. Sci. Res. Inst. Publ. Cambridge Univ. Press, Cambridge (2003), 119--165.

\bibitem{MNT}
S. Mochizuki, H. Nakamura, A. Tamagawa, {\em The Grothendieck conjecture
on the fundamental groups of algebraic curves}, Sugaku Expositions. 14(2001), 31--53. 



\bibitem{Moch2} S. Mochizuki. \textsl{Topics in Absolute Anabelian Geometry II:
Decomposition Groups and Endomorphisms.}
J. Math. Sci. Univ. Tokyo {\bf 20} (2013), 171--269.





\bibitem{N} H. Nakamura. \textsl{Galois rigidity of pure sphere braid groups and profinite calculus.}
J. Math. Sci. Univ. Tokyo {\bf 1} (1994), 72--136.



\bibitem{Oda}T. Oda. \textsl{Etale homotopy type of the moduli spaces of algebraic curves}.
In {\it Geometric Galois Actions I}, edited by L. Schneps and P. Lochak,
London Math. Soc. Lecture Note Ser. {\bf 242} (1997), 85--95.






\bibitem{Rotman}J.J. Rotman. \textsl{An Introduction to the Theory of Groups}. Graduate texts in mathematics, vol. {\bf 148}, 
4th ed., Springer-Verlag (1995).






\bibitem{Sch}F. Schaffhauser. \textsl{Lectures on Klein Surfaces and Their Fundamental Group}. In: L. Alvarez Consul, J. Andersen,
I. Mundet i Riera (eds) \emph{Geometry and Quantization of Moduli Spaces}. Advanced Courses in Mathematics - CRM Barcelona. 
Birkh\"auser (2016), 67--108.






\bibitem{Wells}C. Wells.  \textsl{Automorphisms of group extensions.} Trans. Amer. Math. Soc. {\bf 155}, no. 1 (1971), 189--194. 

\bibitem{Wilkes}G. Wilkes. \textsl{Classification of pro-$p$ ${\rm PD}^2$ pairs and the pro-$p$ curve complex.} 
Groups Geom. Dyn. {\bf 14} (2020), 917--948. 

\bibitem{Wolpert}S. A. Wolpert.  \textsl{The Weil--Petersson metric geometry.} \url{https://arxiv.org/pdf/0801.0175.pdf} (2007).

\end{thebibliography}
\end{document}